\newtheorem{thm}{Theorem}[section]
\newtheorem{prop}[thm]{Proposition}
\newtheorem{lem}[thm]{Lemma}
\newtheorem{cor}[thm]{Corollary}
\theoremstyle{definition}
\newtheorem{rem}[thm]{Remark}
\newtheorem{definition}{Definition}[section]
\numberwithin{equation}{section}
\newcommand\mycom[2]{\genfrac{}{}{0pt}{}{#1}{#2}}
\newcommand{\HDS}{\vrule width0pt height2.3ex depth1.05ex\displaystyle}
\newcommand{\f}[2]{{\frac{\HDS #1}{\HDS #2}}}
\newcommand{\afrac}[1]{\mycom{\phantom{\HDS #1}}{\HDS #1}}
\newcommand\strt{\stackrel{\textbf{.}\,}{\rightarrow}}
\newcommand{\fp}[2]{{\mycom{\HDS #1}{\HDS #2}}}
\newcommand\rza{{\mbox{\hspace{0.5em}}}}
\newcommand\rzb{{\mbox{\hspace{2em}}}}
\newcommand\otm{\otimes}
\newcommand\str{\rightarrow}
\newcommand\mj{\mbox{\bf 1}}
\newcommand\mc{\mbox{\bf c}}
\newcommand\kon{\otimes}
\newcommand{\pravilo}[1]{ \rza \makebox[-.5em][l]{\mbox{\rm #1}}}
\renewcommand{\email}[2][]{%
  \ifx\emails\@empty\relax\else{\g@addto@macro\emails{,\space}}\fi%
  \@ifnotempty{#1}{\g@addto@macro\emails{\textrm{(#1)}\space}}%
  \g@addto@macro\emails{#2}%
}
\def\d#1{{#1\kern-0.4em\char"16\kern-0.1em}}
\def\D#1{{\raise0.2ex\hbox{-}\kern-0.4em#1}}
\def \Dj{\mbox{\raise0.3ex\hbox{-}\kern-0.4em D}}
\definecolor{britishracingzelena1}{rgb}{0.0, 0.26, 0.15}
\def\zn{,\kern-0.09em,}
\title{Coherence for logicians}
\author[Petri\' c]{Zoran Petri\' c}
\address{\scriptsize{
Mathematical Institute of the Serbian Academy of Sciences and Arts\\ Kneza Mihaila 36, 11000 Belgrade, Serbia}}
\email{zpetric@mi.sanu.ac.rs}
\author[Zeki\' c]{Mladen Zeki\' c}
\address{\scriptsize{
Mathematical Institute of the Serbian Academy of Sciences and Arts\\ Kneza Mihaila 36, 11000 Belgrade, Serbia}}
\email{mzekic@mi.sanu.ac.rs}
\date{}
\begin{document}

\begin{abstract}
This paper is addressed to logicians not familiar with category theory. It gives a new proof of coherence for symmetric monoidal closed categories, proven by Kelly and Mac Lane in the early 1970s. We find this result of great importance for proof theory and it is formulated here in pure logical terminology free of categorial notions. Coherence is related to the generality conjecture in general proof theory and we hope that our formulation will make it closer to the proof-theoretical community.

\vspace{.3cm}

\noindent {\small {\it Mathematics Subject Classification} ({\it
        2020}): 03B47, 03F05, 03F07, 03G30, 18M05, 18M30}

\vspace{.5ex}

\noindent {\small {\it Keywords$\,$}: General proof theory, equality of derivations, symmetric monoidal closed categories}
\end{abstract}

\maketitle
\section{Introduction}

By a traditional viewpoint, the role of logic is to provide a foundation of mathematics. However, logic or logical techniques may help in proving some results specific for the rest of mathematics. For example, model theory or set theory provide results important to algebra, analysis and other fields. There is also a great influence of recursion theory and proof theory on theoretical computer science. The relation of logic to the rest of mathematics, via category theory is briefly explained in \cite[Preface, Historical perspective on Part II]{LS86}.

The main result of this paper has the same mathematical content as the one proven in \cite{KML71}, where Kelly and Mac Lane used a proof-theoretical technique of cut-elimination in order to show a coherence result for symmetric monoidal closed categories. This result should be of certain proof-theoretical interest. (For a discussion on how logicians and categorists accept applications of logic to category theory and vice versa see \cite[Preface]{LS86}.) Our intention is to present this result using purely logical terminology, hence make it closer to the proof-theoretical community.

A new in this manuscript relative to \cite{KML71}, \cite{L69}, \cite{DP04} and \cite{DP07} is the following. It provides a proof-theoretic reformulation of coherence, avoiding categorical language. It separates the properties intrinsic to the logical system (see Theorem~\ref{cutelimS}, and Propositions~\ref{Lemma:Main1} and \ref{Lemma:Main2}) from the algebra of terms coding derivations in the system. Also, an explicit bridge to the concerns of general proof theory, including
the generality conjecture (see Remark~\ref{dodatak6}) is given. The book \cite{DP04} is completely devoted to proof-theoretical coherence results checking the identity of derivations in the category of relations. That category is replaced here by the category of 1-dimensional cobordisms. The book \cite{DP07} takes the main result of this manuscript as granted, hence those two texts complement each other. This paper contains a section translating the obtained proof-theoretical results back into categorial terminology.

Our proof follows the approach to coherence results introduced in \cite{DP04}. In the original proof from \cite{KML71}, cut-elimination is somewhat hidden, being expressed in the context of (generalized) natural transformations. In contrast, our proof explicitly presents the cut-elimination theorem for a specific formal system, in a standard form similar to that in \cite{L68}.

The main contribution of such a coherence result to logic is tied to the field of \emph{general} proof theory, initiated by Prawitz in the early 1970s. This is a part of proof theory whose task is to answer the questions ``What is a proof?'', ``When two formal derivations are equivalent?'' and other related problems. These are intrinsic logical problems and by solving them one does not help the rest of mathematics, namely the opposite, some help of the rest of mathematics is expected in a solution. Such a standpoint considers logic as part of mathematics rather than its meta-theory.

For a survey on general proof theory see \cite{D03}. Do\v sen's suggestion was that the above questions have to be answered in a way analogous to Church's Thesis---no formal proof of it should be expected. In \cite{D03} he discusses two approaches to a solution, namely the normalization conjecture and the generality conjecture. The idea behind the latter approach is to diversify the variables in a derivation as much as possible without changing the rules of inference, in order to obtain its maximal generalization. Two derivations with the same premisses and conclusions are considered equivalent when for every maximal generalization of one of them there exists a maximal generalization of the other such that both have the same premisses and conclusions. 

The notion of coherence in category theory is closely related to the generality conjecture. Simplest coherence results are of the form ``all diagrams commute''. For example, if one assumes that the diagram~\ref{pentagon} of rebracketing tensor product commutes, then all the diagrams concerning associativity of tensor product commute (see \cite{ML63}).

However, more involved examples of coherence check the commutativity of diagrams through a functor whose source is the category where these diagrams live, and whose target is usually a ``geometrical'' category. Here we use a geometrical category whose arrows are links connecting letters in formulae of one derivation, which are forced by the rules to be the same. Also, we show that it is possible to obtain a maximal generalization of a derivation so that two letters in the source or conclusion of a derivation are equal if and only if they are connected by a link. This suffices to conclude that the coherence result (see Section~\ref{application}) implies that the generality conjecture holds for a large fragment of our system $\mathcal{IL}$ introduced in Section~\ref{IL}.

For example, the following two derivations are not equivalent. The coloured paths in these derivations denote the letters forced by the rules to be equal. Hence, every generalization of the derivation at the left-hand side has equal first two letters in the antecedent, while these two letters are distinct in every maximal generalization of the right-hand side derivation. 

\centerline{\includegraphics[width=1.3\textwidth]{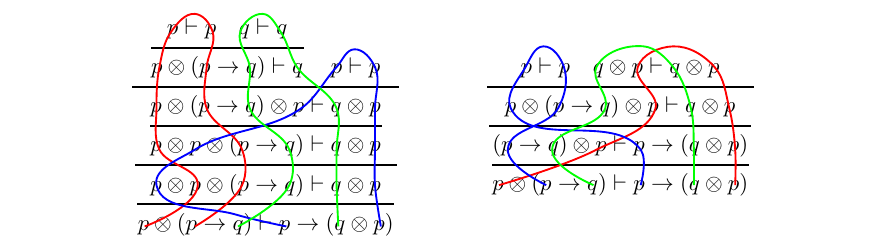} }

Our text consists of three (sometimes overlapping) parts. We start with a purely logical part by introducing a sequent system and proving some of its proof-theoretical properties, which are essential for our main result. In the second part we deal with the ``algebra of proofs'' of such a system. We introduce a language for coding derivations, and define an equational theory on the terms of this language. After proving several properties of this term algebra, a proof-theoretical result corresponding to symmetric monoidal closed coherence is obtained. As a consequence we have that the generality conjecture is \emph{almost} acceptable for our system. This part is ended in showing possible applications, which also make a bridge between viewpoints of logicians and categorists on the main result of the paper.

The last part serves to a logician, not familiar with category theory, to see how some unnamed notions occurring in the main body of the text could be expressed in categorial terminology. Since such  notions are pretty standard and are tied to some classical results on categories, she or he could become more interested in connection between logic and category theory. Moreover, by giving examples of categories satisfying the same conditions as our algebra of proofs, we justify our view of logic as a part of mathematics.

\section{The system \texorpdfstring{$\mathcal{S}$}{S}}

In this section, we introduce a formal system denoted by $\mathcal{S}$. This system corresponds to a fragment of intuitionistic linear logic. However, our notation is categorial and not the standard one used in linear logic. Namely, the constant $I$ that we use corresponds to the multiplicative
constant $\mj$ (the weakening rule given below plays the role of the usual $\mj L$ rule) and our $\str$ corresponds to the linear implication usually denoted by $\multimap$.

The \textit{formulae} of $\mathcal{S}$ are built out of an infinite set of propositional letters and the constant $I$, with the help of two binary logical connectives $\kon$ and $\str$. The \textit{sequents} of $\mathcal{S}$ are of the form $\Gamma\vdash A$, where $\Gamma$ is a sequence (possibly empty) of formulae, and $A$ is a formula. We call $\Gamma$ in $\Gamma\vdash A$ the \textit{antecedent}, and $A$ the \textit{consequent} of the sequent. The \textit{axioms} of $\mathcal{S}$ are
$$A\vdash A \quad \text{and}\quad \vdash I.$$

\noindent The \textit{structural inference figures} of $\mathcal{S}$ are
\begin{prooftree}
\AxiomC{$\Gamma\vdash A$}
\RightLabel{weakening}
\UnaryInfC{$I,\Gamma\vdash A$}
\DisplayProof \qquad
\AxiomC{$\Gamma,A,B,\Delta\vdash C$}
\RightLabel{interchange}
\UnaryInfC{$\Gamma,B,A,\Delta\vdash C$}
\end{prooftree}

\begin{prooftree}
\AxiomC{$\Gamma\vdash A$}
\AxiomC{$\Delta,A,\Theta\vdash B$}
\RightLabel{cut}
\BinaryInfC{$\Delta,\Gamma,\Theta\vdash B$}
\end{prooftree}

\noindent The \textit{operational inference figures} of $\mathcal{S}$ are

\begin{prooftree}
\AxiomC{$\Gamma,A,B,\Delta\vdash C$}
\RightLabel{$\otimes\vdash$}
\UnaryInfC{$\Gamma,A\otimes B,\Delta\vdash C$}
\DisplayProof \qquad
\AxiomC{$\Gamma\vdash A$}
\AxiomC{$\Delta\vdash B$}
\RightLabel{$\vdash\otimes$}
\BinaryInfC{$\Gamma,\Delta\vdash A\otimes B$}
\end{prooftree}

\begin{prooftree}
\AxiomC{$\Gamma\vdash A$}
\AxiomC{$B,\Delta\vdash C$}
\RightLabel{$\to\vdash$}
\BinaryInfC{$\Gamma,A\to B,\Delta\vdash C$}
\DisplayProof \qquad
\AxiomC{$A,\Gamma\vdash B$}
\RightLabel{$\vdash\to$}
\UnaryInfC{$\Gamma\vdash A\to B$}
\end{prooftree}

\begin{rem}\label{atomize}
A sequent $\Gamma,A\otm B,\Theta\vdash C$ has a (cut-free) derivation in $\mathcal{S}$ if and only if $\Gamma,A, B,\Theta\vdash C$ has a (cut-free) derivation in $\mathcal{S}$. The direction from left-to-right is proved by induction on complexity of a derivation of the sequent $\Gamma,A\otm B,\Theta\vdash C$, and the other direction is straightforward.
\end{rem}

\begin{definition}
We say that a formula is \textit{constant} if it does not contain propositional letters (e.g., $(I\str(I\kon I))\kon(I\str I)$ is constant). A sequence is \emph{constant} if it contains only constant formulae.
\end{definition}

By induction on complexity of $A$ we can easily prove the following lemma. (For a strengthening of this result see Lemma~\ref{AtoI}.)

\begin{lem} \label{Lemma:ConstI}
If $A$ is a constant formula, then $A\vdash I$ and $I\vdash A$ are derivable in $\mathcal{S}$.
\end{lem}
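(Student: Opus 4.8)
The plan is to induct on the complexity of the constant formula $A$. Since $A$ contains no propositional letters, its only atomic instance is $A = I$, and every compound constant formula is either $B\kon C$ or $B\str C$ with $B$ and $C$ again constant. The essential design choice is to prove both claims $A\vdash I$ and $I\vdash A$ \emph{simultaneously} in one induction, so that the induction hypothesis delivers all four sequents $B\vdash I$, $I\vdash B$, $C\vdash I$, $I\vdash C$. The base case is immediate: when $A=I$, both $A\vdash I$ and $I\vdash A$ are the axiom $I\vdash I$.

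Two elementary manipulations drive the inductive step. First, cutting the axiom $\vdash I$ against a sequent of the shape $I,\Gamma\vdash D$ erases the leading $I$; in particular it turns the hypothesis $I\vdash B$ into $\vdash B$ and $I\vdash C$ into $\vdash C$, while weakening conversely inserts a fresh $I$ into any antecedent. Second, the pair of hypotheses $B\vdash I$ and $I\vdash B$ (and likewise for $C$) lets me exchange an occurrence of $B$ for $I$, or of $I$ for $B$, by a single application of cut. With these in hand the cases are routine. For $A=B\kon C$, I extract $\vdash B$ and $\vdash C$, combine them via $\vdash\kon$ into $\vdash B\kon C$, and weaken to $I\vdash B\kon C$; for the converse I derive $B,C\vdash I$ (by cutting $B\vdash I$ and $C\vdash I$ into a doubly weakened copy $I,I\vdash I$ of $\vdash I$) and pass to $B\kon C\vdash I$ using $\kon\vdash$, equivalently by Remark~\ref{atomize}.

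For $A=B\str C$, to obtain $I\vdash B\str C$ I first build $B,I\vdash C$ by cutting $B\vdash I$ into a weakened copy $I,I\vdash C$ of $I\vdash C$, and then apply $\vdash\str$; to obtain $B\str C\vdash I$ I apply $\str\vdash$ to the premisses $\vdash B$ and $C\vdash I$.

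The only point that genuinely needs attention—and the reason the two statements must be carried together—is the implicational case: establishing $B\str C\vdash I$ requires the premiss $\vdash B$, which is available only through the hypothesis $I\vdash B$, that is, through the claim for the subformula $B$ in the direction \emph{opposite} to the one currently being proved. Beyond this polarity bookkeeping there is no real obstacle; the whole argument is a matter of correctly assembling cuts, weakenings, and the operational figures.
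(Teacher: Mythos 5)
Your proof is correct, and it is essentially the argument the paper intends: the paper only remarks that the lemma follows ``by induction on complexity of $A$,'' and your simultaneous induction on both sequents $A\vdash I$ and $I\vdash A$, with the cut/weakening bookkeeping spelled out, is exactly the natural way to carry that induction through (your observation that the case $B\str C\vdash I$ needs the opposite-direction hypothesis $I\vdash B$ is the one point where the simultaneity genuinely matters). All the individual steps check out against the rules of $\mathcal{S}$, so nothing further is needed.
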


The following is a straightforward corollary of Lemma~\ref{Lemma:ConstI}.

\begin{cor} \label{Corollary:ConstConst}
If $\Gamma$ is a constant sequence and $A$ is a constant formula, then $\Gamma\vdash A$ is derivable in $\mathcal{S}$.
\end{cor}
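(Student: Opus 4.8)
The plan is to proceed by induction on the length $n$ of the constant sequence $\Gamma$, using Lemma~\ref{Lemma:ConstI} together with the axiom $\vdash I$, weakening, and cut as the only ingredients.

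For the base case $n=0$, the sequence $\Gamma$ is empty, so we must derive $\vdash A$ for a constant formula $A$. Since $A$ is constant, Lemma~\ref{Lemma:ConstI} gives a derivation of $I\vdash A$. Applying cut to the axiom $\vdash I$ (as the left premiss, with the cut formula $I$) and to $I\vdash A$ (as the right premiss, with both $\Delta$ and $\Theta$ empty) yields $\vdash A$, as required.

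For the inductive step I would write $\Gamma = B,\Gamma'$, where $B$ is a constant formula and $\Gamma'$ is a constant sequence of length $n$. By the induction hypothesis, $\Gamma'\vdash A$ is derivable, and one application of weakening produces $I,\Gamma'\vdash A$. Since $B$ is constant, Lemma~\ref{Lemma:ConstI} furnishes $B\vdash I$; cutting $B\vdash I$ against $I,\Gamma'\vdash A$ with $I$ as the cut formula, $\Delta$ empty and $\Theta=\Gamma'$, gives $B,\Gamma'\vdash A$, that is, $\Gamma\vdash A$. This closes the induction.

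I do not expect a genuine obstacle here, since the argument is essentially bookkeeping once Lemma~\ref{Lemma:ConstI} is in hand; indeed this is why the statement is phrased as a straightforward corollary. The only point requiring a little care is the correct use of the two directions of Lemma~\ref{Lemma:ConstI}: the direction $B\vdash I$ is what lets us peel off an antecedent formula via cut in the inductive step, while the direction $I\vdash A$ is needed (together with the axiom $\vdash I$) only in the base case for the consequent. One must also match the antecedent pattern $\Delta, I, \Theta$ of the cut figure correctly so that the cut formula $I$ sits in the intended position.
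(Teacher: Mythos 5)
Your proof is correct, and it is exactly the routine argument the paper has in mind when it calls the statement a straightforward corollary of Lemma~\ref{Lemma:ConstI}: the paper gives no explicit proof, and your induction on the length of $\Gamma$, peeling off constant formulae via $B\vdash I$, weakening and cut, with $\vdash I$ and $I\vdash A$ handling the base case, fills in the intended bookkeeping with the cut figure instantiated correctly throughout.
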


As it is expected, a cut-elimination procedure is applicable to the system $\mathcal{S}$. A strengthening of such a procedure will be completely described later in the proof of Theorem~\ref{cutelim1} in Appendix~\ref{sec:appendix}, which, together with Proposition~\ref{translation}, delivers the following result.

\begin{thm}[Cut-elimination]\label{cutelimS}
Every derivation in $\mathcal{S}$ can be transformed into a cut-free derivation of the same sequent.
\end{thm}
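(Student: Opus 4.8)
The plan is to follow the standard Gentzen strategy, exploiting the fact that in $\mathcal{S}$ no inference figure duplicates a formula, so a cut never needs to be multiplied. First I would reduce the theorem to the elimination of a single \emph{topmost} cut: choose in the given derivation an occurrence of cut having no other cut above it, so that both of its premise-derivations are already cut-free. If every such configuration can be replaced by a cut-free derivation of the same sequent, then iterating the procedure, each step strictly decreasing the total number of cuts, yields a fully cut-free derivation. For the elimination of one topmost cut I would argue by a double induction: the outer induction on the \emph{rank} of the cut, i.e.\ the number of connectives in the cut formula, and the inner induction on the sum of the heights of the two premise-derivations. The base situations are the axiom cases. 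If the left premise is an instance of $A\vdash A$, then the conclusion of the cut coincides with the right premise, so the cut is simply discarded; symmetrically if the right premise is an axiom. The axiom $\vdash I$ interacts with weakening and is treated together with the constant $I$ below.

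The heart of the argument is the \textbf{principal case}, where the cut formula is introduced by the last inference on both sides; here the rank strictly decreases, so the outer induction hypothesis applies. For $\otimes$, the left premise is concluded by $\vdash\otimes$ from $\Gamma_1\vdash A$ and $\Gamma_2\vdash B$, and the right premise by $\otimes\vdash$ from $\Delta,A,B,\Theta\vdash C$; I would replace the single cut on $A\otimes B$ by two cuts of smaller rank, first cutting $B$ against $\Gamma_2\vdash B$ to obtain $\Delta,A,\Gamma_2,\Theta\vdash C$, and then cutting $A$ against $\Gamma_1\vdash A$ to obtain $\Delta,\Gamma_1,\Gamma_2,\Theta\vdash C$. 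For $\to$, the left premise is concluded by $\vdash\to$ from $A,\Gamma\vdash B$, and the right premise by $\to\vdash$ from $\Sigma\vdash A$ and $B,\Pi\vdash C$; I would replace the cut on $A\to B$ by a cut on $A$ (against $\Sigma\vdash A$) followed by a cut on $B$ (against $B,\Pi\vdash C$), again lowering the rank. The constant $I$ requires a small separate analysis: an $I$ in an antecedent is always introduced by weakening (or is the principal formula of the axiom $I\vdash I$), and a cut with $\vdash I$ on the left against a weakening that introduces exactly that occurrence is removed by deleting the weakening and keeping its premise.

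The remaining \textbf{commutation cases} are those in which the cut formula is not principal in at least one premise. Here I would permute the cut upward past the last inference of that premise and re-apply the inference afterwards; since the permuted cut sits over strictly shorter derivations, the inner induction hypothesis applies at unchanged rank. When the cut formula is the consequent and the left premise ends in interchange, weakening, $\otimes\vdash$ or $\to\vdash$, the consequent is merely inherited and the cut slides up unproblematically; the symmetric situation occurs when the cut formula lies in the antecedent of the right premise and is untouched by its last inference. The step I expect to be the main obstacle is the careful bookkeeping forced by the structural figures: the interchange rule permutes antecedent formulae, so in each commutation I must track the exact position of the cut formula and verify that the contexts $\Delta,\Gamma,\Theta$ reassemble correctly, while the weakening figure for $I$ and the splitting of contexts in the two-premise figures ($\vdash\otimes$, $\to\vdash$, and cut itself) must be matched so that the reassociated sequent is literally the original conclusion. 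None of these sub-cases is deep, but their number and the sensitivity of the antecedent to permutation make the verification the most laborious part, which is precisely why the full description is deferred to the strengthened procedure in the proof of Theorem~\ref{cutelim1}.
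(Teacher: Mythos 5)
Your proposal is correct and follows essentially the same route as the paper, which defers the proof of this theorem to the strengthened procedure of Theorem~\ref{cutelim1}: a Gentzen-style double induction with principal reductions (splitting a cut on $A\otimes B$ or $A\to B$ into two cuts of smaller degree, and absorbing $I$ into the weakening that introduced it) and commutation steps that push the cut above parametric inferences, repaired by interchanges. The only difference is bookkeeping: you take the sum of the heights of the premise-derivations as the inner measure, where the paper uses a trace-based rank of the cut formula, and the paper carries out the argument in $\mathcal{IL}$ while additionally verifying that the coded terms are preserved up to the equalities \ref{cat1}--\ref{triang2}; neither difference affects correctness.
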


At the end of this section, we prove a lemma that is essentially a corollary of Theorem~\ref{cutelimS}. We need the following definition.

\begin{definition}
We say that a formula is \textit{proper} if it does not contain subformulae of the form $B\str C$, where $C$ is constant and $B$ is not constant (e.g., $(I\str I)\str (I\str p)$ is proper, while $(p\kon I)\str(I\str I)$ is not). A sequence $\Gamma$ is \emph{proper} if all the formulae contained in $\Gamma$ are proper. Finally, a sequent $\Gamma\vdash A$ is \emph{proper} if $\Gamma$ is a proper sequence, and $A$ is a proper formula.
\end{definition}

\begin{lem} \label{Lemma:ConstProper}
Suppose that $\Gamma\vdash A$ is derivable in $\mathcal{S}$, where $\Gamma$ is proper and $A$ is constant. Then $\Gamma$ is constant.
\end{lem}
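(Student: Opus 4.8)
The plan is to invoke the Cut-elimination theorem to replace the given derivation by a cut-free one of $\Gamma\vdash A$, and then to argue by induction on its height. Cut-elimination is what makes the argument possible: in an application of \emph{cut} the cut formula is wholly unconstrained by the end-sequent, so neither its constancy nor its properness can be read off, and the induction would have no grip on it. In a cut-free derivation, by contrast, every rule either merely rearranges the antecedent or decomposes a principal formula into immediate subformulae, so the hypotheses ``$\Gamma$ proper'' and ``$A$ constant'' descend to the premisses.

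Before the induction I would record a few closure properties. Every subformula of a constant formula is constant, so a constant formula is automatically proper; thus I never need to verify properness separately once constancy is known. Both properties are preserved under permutation of the antecedent and under deletion of antecedent formulae, which takes care of \emph{interchange} and of the extra $I$ introduced by \emph{weakening} (that $I$ being constant). If $X\otimes Y$ is constant (resp.\ proper) then so are $X$ and $Y$, and likewise for $X\to Y$. The one genuinely useful fact, and exactly the one the definition of \emph{proper} is designed to supply, is the following: if $X\to Y$ is proper and $Y$ is constant, then $X$ is constant. Indeed $X\to Y$ is a subformula of itself, so if $Y$ were constant while $X$ were not, then $X\to Y$ would be a forbidden subformula, contradicting properness.

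The induction is by cases on the last rule. The axioms, \emph{weakening} and \emph{interchange} are immediate from the closure facts. For $\vdash\otimes$ the constant consequent $X\otimes Y$ splits into constant $X$ and $Y$, so both premisses have proper antecedents and constant consequents and the induction hypothesis gives their antecedents constant; for $\otimes\vdash$ the principal antecedent formula $X\otimes Y$ is proper and splits into proper $X,Y$, so the premiss has a proper antecedent and a constant consequent and the induction hypothesis applies, after which the pieces reassemble to the constancy of $\Gamma$. The rule $\vdash\to$ is similar: a constant consequent $X\to Y$ makes both $X$ and $Y$ constant, the premiss $X,\Gamma\vdash Y$ then has constant consequent and proper antecedent, and the induction hypothesis yields $X,\Gamma$ constant, whence $\Gamma$ is constant.

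The crux, and the step I expect to be the main obstacle, is $\to\vdash$, where $\Gamma=\Gamma_1,X\to Y,\Delta$ arises from premisses $\Gamma_1\vdash X$ and $Y,\Delta\vdash C$ with $C$ constant. Here $X$ occurs on the \emph{right} of the first premiss, so the constancy of $C$ says nothing about it directly. I would first apply the induction hypothesis to the second premiss---whose antecedent $Y,\Delta$ is proper and whose consequent $C$ is constant---to conclude that $Y,\Delta$, and in particular $Y$, is constant. Since $\Gamma$ is proper, its member $X\to Y$ is proper, so the tailored fact above forces $X$ to be constant. Now the first premiss $\Gamma_1\vdash X$ has a proper antecedent and a constant consequent, so the induction hypothesis applies and gives $\Gamma_1$ constant. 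Hence $\Gamma_1$, $X\to Y$, and $\Delta$ are all constant, so $\Gamma$ is constant, completing the induction. This case is precisely why the notion of proper formula is needed, and it is the only place where anything beyond bookkeeping occurs.
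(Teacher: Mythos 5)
Your proof is correct and follows essentially the same route as the paper's: both pass to a cut-free derivation, argue by cases on the last rule, and use properness exactly once, in the $\to\vdash$ case, to transfer constancy from the consequent side back to $X$ via the forbidden-subformula condition, exactly as in the paper's Case~4. The only difference is the induction measure---you induct on the height of the cut-free derivation while the paper counts occurrences of $\otimes$, $\to$ and $I$ in the sequent---and your choice has the minor advantage of handling the interchange case explicitly, a case the paper's five-case analysis passes over (its sequent-complexity measure does not decrease there, whereas your height measure does).
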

\begin{proof}
We proceed by induction on the number of occurrences of $\kon$, $\str$ and $I$ in the sequent $\Gamma\vdash A$. The base case is when $\Gamma$ is empty and $A$ is $I$ and it is trivial. For the induction step we consider five cases, depending on the last inference figure in a cut-free derivation of $\Gamma\vdash A$.

\vspace{1ex}
\noindent\textit{Case 1.}  Suppose that our derivation ends with weakening. Then $\Gamma$ is of the form $I,\Delta$, and by the induction hypothesis applied to $\Delta\vdash A$ we have that $\Delta$ is constant, and hence $\Gamma$ is constant.

\vspace{1ex}
\noindent\textit{Case 2.}  If our derivation ends with $\kon\vdash$:
\begin{prooftree}
\AxiomC{$\Delta,B,C,\Theta\vdash A$}
\RightLabel{,}
\UnaryInfC{$\Delta,B\kon C,\Theta \vdash A$}
\end{prooftree}
then by the induction hypothesis we have that $\Delta,B,C,\Theta$ is constant, and consequently $\Gamma$ is also constant.

\vspace{1ex}
\noindent\textit{Case 3.}  If our derivation ends with $\vdash\kon$:
\begin{prooftree}
\AxiomC{$\Delta\vdash A_1$}
\AxiomC{$\Theta\vdash A_2$}
\RightLabel{,}
\BinaryInfC{$\Delta,\Theta\vdash A$}
\end{prooftree}
then, since $A$ and hence $A_1$ and $A_2$ are constant, by the induction hypothesis we have that $\Delta$ and $\Theta$ are constant. Therefore, $\Gamma$ is also constant.

\vspace{1ex}
\noindent\textit{Case 4.}  If our derivation ends with $\str\vdash$:
\begin{prooftree}
\AxiomC{$\Delta\vdash B$}
\AxiomC{$C,\Theta\vdash A$}
\RightLabel{,}
\BinaryInfC{$\Delta, B\str C,\Theta\vdash A$}
\end{prooftree}
then by the induction hypothesis applied to the right premise, we have that $C,\Theta$ is constant. In particular, $C$ is a constant formula, and since $\Gamma$ is proper, we conclude that $B$ is also constant. Now we can apply the induction hypothesis to the left premise as well, so we have that $\Delta$ is constant. Thus, $\Gamma$ is a constant sequence.

\vspace{1ex}
\noindent\textit{Case 5.} If our derivation ends with $\vdash\str$:
\begin{prooftree}
\AxiomC{$A_1,\Gamma\vdash A_2$}
\RightLabel{,}
\UnaryInfC{$\Gamma\vdash A$}
\end{prooftree}
then by the induction hypothesis ($A_2$ is constant, and since $A_1$ is constant, $\Gamma$ is proper, we have that $A_1,\Gamma$ is proper) we conclude that $A_1,\Gamma$ is constant. In particular, $\Gamma$ is constant, which proves the lemma.
\end{proof}

\begin{rem}
Note that we rely on the assumption that $\Gamma$ is proper only in Case~4 of the above proof. However, it is not hard to see that Lemma~\ref{Lemma:ConstProper} is not valid without this assumption. For example, if $\Gamma$ is $p,p\str I$ and $A$ is $I$, then $\Gamma\vdash A$ is derivable in $\mathcal{S}$ and $A$ is constant, but $\Gamma$ is not.
\end{rem}

\section{Two propositions about derivability in $\mathcal{S}$}

In this section we prove two results concerning derivability in $\mathcal{S}$ (Propositions~\ref{Lemma:Main1} and \ref{Lemma:Main2}), which are essential for our main theorem. Both propositions have  flavour of interpolation results, but more appropriate name would be  \emph{splitting of derivations} in this system. We start with the following definition.

\begin{definition}
Let $\Gamma$ and $\Delta$ be two sequences of formulae. We say that $\Gamma$ and $\Delta$ are \textit{disjoint} when there is no propositional letter occurring simultaneously in a formula from $\Gamma$ and a formula from $\Delta$ (e.g., $p\kon (q\str I), r\str p$ and $I\str t$ are disjoint, while $p\kon (q\str I), r\str p$ and $I\str t, q\kon I$ are not). For a sequence $\Gamma$ of formulae, let $\Pi_\Gamma$ denote an arbitrary \emph{permutation} of $\Gamma$.
\end{definition}

\begin{rem} \label{Remark:PiGamma}
Let $\Gamma$ be a sequence of formulae. Then $\Pi_{\Gamma}\vdash A$ is derivable in $\mathcal{S}$ if and only if $\Gamma\vdash A$ is derivable in $\mathcal{S}$ (just apply an appropriate number of interchanges).
\end{rem}

\begin{lem} \label{Lemma:Main0}
Let $\Pi_{\Gamma,\Delta}\vdash A$ be derivable in $\mathcal{S}$ and let $\Delta$ and $\Gamma,A$ be disjoint. Then $\Delta\vdash I$ and $\Gamma\vdash A$ are derivable in $\mathcal{S}$.
\end{lem}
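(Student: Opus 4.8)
The plan is to invoke the cut-elimination theorem, fix a \emph{cut-free} derivation of $\Pi_{\Gamma,\Delta}\vdash A$, and argue by induction on its height, distinguishing cases according to the last inference figure. Throughout, I keep track of which occurrences in the antecedent descend from $\Gamma$ and which descend from $\Delta$; passing to a premise splits each of $\Gamma$ and $\Delta$ into the parts feeding that premise. The invariant I maintain is the primeness hypothesis itself: since subsequences of prime sequences and subformulae of prime formulae remain prime (and ``prime to'' is symmetric), whenever I move to a premise the relevant $\Delta$-part is again prime to the corresponding $\Gamma$-part together with that premise's consequent, so the induction hypothesis applies. Two easy auxiliary facts will be used when reassembling: from $\Delta_1\vdash I$ and $\Delta_2\vdash I$ one derives $\Delta_1,\Delta_2\vdash I$ (weaken $\Delta_2\vdash I$ to $I,\Delta_2\vdash I$ and cut against $\Delta_1\vdash I$ on the displayed $I$); and from $\Gamma_1\vdash I$ and $\Gamma_2\vdash A$ one derives $\Gamma_1,\Gamma_2\vdash A$ (weaken $\Gamma_2\vdash A$ to $I,\Gamma_2\vdash A$ and cut against $\Gamma_1\vdash I$).

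The base cases are the axioms. For $\vdash I$ both $\Gamma$ and $\Delta$ are empty and there is nothing to prove. For $A\vdash A$ the single antecedent formula lies either in $\Gamma$---then $\Delta$ is empty and both desired sequents are axioms---or in $\Delta$; in the latter case primeness of $\Delta=(A)$ against $A$ forces $A$ to contain no propositional letters, so $A$ is constant and Lemma~\ref{Lemma:ConstI} gives $A\vdash I$ while Corollary~\ref{Corollary:ConstConst} gives $\vdash A$.

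The routine cases are weakening, interchange, $\otm\vdash$, $\vdash\otm$, $\vdash\str$, and $\str\vdash$ whose principal formula $B\str C$ belongs to $\Gamma$. In each I apply the induction hypothesis to the premise(s) under the inherited split, reapply the same figure on the $\Gamma$-parts to rebuild $\Gamma\vdash A$, and (for the binary figures) combine the two $\vdash I$ conclusions on the $\Delta$-parts via the auxiliary facts. For example, in $\vdash\otm$ the two premises yield $\Gamma_i\vdash A_i$ and $\Delta_i\vdash I$ for $i=1,2$, whence $\vdash\otm$ gives $\Gamma\vdash A_1\otm A_2$ and the combination fact gives $\Delta\vdash I$; in $\otm\vdash$ one first replaces the principal $\otm$ by its two components, which carry the same propositional letters so primeness is undisturbed (cf.\ Remark~\ref{atomize}).

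The one genuinely interesting case, which I expect to require the most care, is $\str\vdash$ with principal formula $B\str C$ lying in $\Delta$. Since $B\str C$ is prime to $\Gamma,A$, both $B$ and $C$ are prime to everything coming from $\Gamma$ and to $A$. In the left premise $\Gamma_1,\Delta_1\vdash B$ the sequence $\Gamma_1$ is therefore prime to $\Delta_1,B$, so I apply the induction hypothesis \emph{with the roles of the two sequences exchanged}, obtaining $\Gamma_1\vdash I$ and $\Delta_1\vdash B$. In the right premise $C,\Gamma_2,\Delta_2\vdash A$ the sequence $C,\Delta_2$ is prime to $\Gamma_2,A$, so the induction hypothesis yields $\Gamma_2\vdash A$ and $C,\Delta_2\vdash I$. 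Reassembling, $\str\vdash$ applied to $\Delta_1\vdash B$ and $C,\Delta_2\vdash I$ produces $\Delta_1,B\str C,\Delta_2\vdash I$, that is $\Delta\vdash I$; and $\Gamma_1\vdash I$ together with $\Gamma_2\vdash A$ produces $\Gamma\vdash A$ by the auxiliary fact. This role-swap---available precisely because primeness is symmetric and $B,C$ sit on the $\Delta$ side---is the crux of the argument.
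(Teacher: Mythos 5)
Your proposal is correct and follows essentially the same route as the paper: induction on a cut-free derivation (with permutations of the antecedent tracked implicitly where the paper uses $\Pi_\Gamma$, $\Pi_\Delta$ and Remark~\ref{Remark:PiGamma}), with the same case analysis and the same reassembly of $\vdash I$ conclusions via weakening and cut. Your ``crux'' case---$\str\vdash$ with principal formula in $\Delta$, handled by applying the induction hypothesis with the roles of the two sequences exchanged---is exactly what the paper compresses into the parenthetical ``treated analogously'' in its Case~5, and you have correctly identified and justified the symmetry that makes that remark legitimate.
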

\begin{proof}
We will prove that if $\Pi_{\Gamma,\Delta}\vdash A$ is derivable in $\mathcal{S}$, while $\Delta$ and $\Gamma,A$ are disjoint, then for some $\Pi_\Gamma$ and $\Pi_\Delta$, the sequents $\Pi_\Delta\vdash I$ and $\Pi_\Gamma\vdash A$ are derivable in $\mathcal{S}$. By Remark \ref{Remark:PiGamma} this suffices for our proof. We proceed by induction on complexity of a cut-free derivation of $\Pi_{\Gamma,\Delta}\vdash A$. This complexity could be measured by the height of this derivation, or by the number of sequents in it, or in some other way.

The basis of this induction, i.e.\ when our derivation of $\Pi_{\Gamma,\Delta}\vdash A$ consists just of an axiom, is easy to deal with by relying on Corollary~\ref{Corollary:ConstConst} in some cases when $A$ is constant. For the induction step we have to consider the following cases depending on the last rule applied in our derivation of $\Pi_{\Gamma,\Delta}\vdash A$.

\vspace{2ex}
\noindent\textit{Case 1.} If our derivation ends with weakening, then we apply the induction hypothesis to the premise of this weakening. Moreover, we apply weakening once to the appropriate derived sequent (depending on whether $I$ is introduced within $\Gamma$ or $\Delta$).

\vspace{2ex}
\noindent\textit{Case 2.} If our derivation ends with interchange, then we just apply the induction hypothesis to the premise of this rule.

\vspace{2ex}
\noindent\textit{Case 3.} If our derivation ends with $\kon\vdash$, and  $B\kon C$ is introduced,
then we apply the induction hypothesis to the premise of this rule. Moreover, by relying on Remark~\ref{Remark:PiGamma}, we may assume that in the derived sequent containing $B$ and $C$, these two formulae are consecutive. It remains to apply $\kon\vdash$ to this sequent.

\vspace{2ex}
\noindent\textit{Case 4.} Assume that our derivation ends with $\vdash\kon$:
\begin{prooftree}
\AxiomC{$\Pi_{\Gamma_1,\Delta_1}\vdash A_1$}
\AxiomC{$\Pi_{\Gamma_2,\Delta_2}\vdash A_2$}
\RightLabel{,}
\BinaryInfC{$\Pi_{\Gamma,\Delta}\vdash A$}
\end{prooftree}
where $\Gamma_i$ and $\Delta_j$ are subsequences of $\Gamma$ and $\Delta$, respectively (we keep to this notation in the sequel). By the induction hypothesis we have that $\Pi_{\Delta_1}\vdash I$, $\Pi_{\Gamma_1}\vdash A_1$, $\Pi_{\Delta_2}\vdash I$ and $\Pi_{\Gamma_2}\vdash A_2$ are derivable in $\mathcal{S}$. Applying $\vdash\kon$ to $\Pi_{\Gamma_1}\vdash A_1$ and $\Pi_{\Gamma_2}\vdash A_2$ we obtain that $\Pi_{\Gamma}\vdash A$ is derivable in $\mathcal{S}$, for $\Pi_{\Gamma}=\Pi_{\Gamma_1},\Pi_{\Gamma_2}$. From the following derivation
\begin{prooftree}
\AxiomC{$\Pi_{\Delta_1}\vdash I$}
\AxiomC{$\Pi_{\Delta_2}\vdash I$}
\BinaryInfC{$\Pi_{\Delta_1},\Pi_{\Delta_2}\vdash I\kon I$}
\AxiomC{$I\vdash I$}
\UnaryInfC{$I,I\vdash I$}
\UnaryInfC{$I\kon I\vdash I$}
\RightLabel{,}
\BinaryInfC{$\Pi_{\Delta_1},\Pi_{\Delta_2}\vdash I$}
\end{prooftree}
one concludes that $\Pi_{\Delta}\vdash I$ is derivable in $\mathcal{S}$, for $\Pi_{\Delta}=\Pi_{\Delta_1},\Pi_{\Delta_2}$.

\vspace{2ex}
\noindent\textit{Case 5.} Assume that our derivation ends with $\str\vdash$, and that $\str$ is introduced within $\Gamma$ (the case when it is introduced within $\Delta$ is treated analogously):
\begin{prooftree}
\AxiomC{$\Pi_{\Gamma_1,\Delta_1}\vdash B$}
\AxiomC{$C,\Pi_{\Gamma_2,\Delta_2}\vdash A$}
\RightLabel{.}
\BinaryInfC{$\Pi_{\Gamma,\Delta}\vdash A$}
\end{prooftree}
By the induction hypothesis we have that $\Pi_{\Delta_1}\vdash I$, $\Pi_{\Gamma_1}\vdash B$, $\Pi_{\Delta_2}\vdash I$ and $C,\Pi_{\Gamma_2}\vdash A$ are derivable in $\mathcal{S}$ (for the latter we rely on Remark~\ref{Remark:PiGamma}). Applying $\str\vdash$ to $\Pi_{\Gamma_1}\vdash B$ and $C,\Pi_{\Gamma_2}\vdash A$ we have that $\Pi_{\Gamma}\vdash A$ is derivable in $\mathcal{S}$, for $\Pi_{\Gamma}=\Pi_{\Gamma_1},B\str C,\Pi_{\Gamma_2}$. Also, $\Pi_{\Delta}\vdash I$ is derivable in $\mathcal{S}$ (in the same manner as in Case 4).

\vspace{2ex}
\noindent\textit{Case 6.} Assume that our derivation ends with $\vdash\str$:
\begin{prooftree}
\AxiomC{$A_1,\Pi_{\Gamma,\Delta}\vdash A_2$}
\RightLabel{.}
\UnaryInfC{$\Pi_{\Gamma,\Delta}\vdash A$}
\end{prooftree}
By the induction hypothesis we have that $\Pi_{\Delta}\vdash I$ and $A_1,\Pi_{\Gamma}\vdash A_2$ are derivable in $\mathcal{S}$  (for the latter we rely on Remark~\ref{Remark:PiGamma}), and it remains to apply $\vdash\str$ to $A_1,\Pi_{\Gamma}\vdash A_2$.
\end{proof}

The following two propositions are derived with the help of Lemma~\ref{Lemma:Main0}.

\begin{prop} \label{Lemma:Main1}
If $\Pi_{\Gamma,\Delta}\vdash A\kon B$ is derivable in $\mathcal{S}$, while  $\Gamma,A$ and $\Delta,B$ are disjoint, then $\Gamma\vdash A$ and $\Delta\vdash B$ are derivable in $\mathcal{S}$.
\end{prop}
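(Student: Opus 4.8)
The plan is to argue by induction on a cut-free derivation of $\Pi_{\Gamma,\Delta}\vdash A\kon B$ (which exists by cut-elimination), distinguishing cases on its last inference figure, exactly in the spirit of the proof of Lemma~\ref{Lemma:Main0}. Throughout I would freely use Remark~\ref{Remark:PiGamma} to rearrange antecedents, and I would repeatedly invoke the following recombination step: given $\Theta_1\vdash I$ and $\Theta_2\vdash C$, one obtains $\Theta_1,\Theta_2\vdash C$ by weakening $\Theta_2\vdash C$ to $I,\Theta_2\vdash C$ and then cutting against $\Theta_1\vdash I$. This lets me absorb the ``$\vdash I$'' halves produced by Lemma~\ref{Lemma:Main0} back into the relevant antecedent. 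I would also record at the outset the consequences of the hypothesis: $\Gamma,A$ prime to $\Delta,B$ yields in particular that $\Delta$ is prime to $\Gamma,A$ and that $\Gamma$ is prime to $\Delta,B$, and both facts descend to subsequences.

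The decisive case is when the derivation ends with $\vdash\kon$, introducing $A\kon B$ from premises $\Pi_{\Gamma_1,\Delta_1}\vdash A$ and $\Pi_{\Gamma_2,\Delta_2}\vdash B$, where $\Gamma_1,\Gamma_2$ partition $\Gamma$ and $\Delta_1,\Delta_2$ partition $\Delta$. Here no induction hypothesis is needed: since $\Delta_1$ is prime to $\Gamma_1,A$, Lemma~\ref{Lemma:Main0} applied to the left premise gives $\Delta_1\vdash I$ and $\Gamma_1\vdash A$; since $\Gamma_2$ is prime to $\Delta_2,B$, the same lemma applied to the right premise gives $\Gamma_2\vdash I$ and $\Delta_2\vdash B$. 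The recombination step then produces $\Gamma\vdash A$ from $\Gamma_1\vdash A$ and $\Gamma_2\vdash I$, and $\Delta\vdash B$ from $\Delta_2\vdash B$ and $\Delta_1\vdash I$.

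The genuinely inductive cases are the left rules. For weakening, interchange and $\kon\vdash$ the principal formula lies wholly in $\Gamma$ or wholly in $\Delta$ while the consequent $A\kon B$ is carried unchanged, so I would apply the induction hypothesis to the premise (primeness is preserved because $C\kon D$ and $C,D$ carry the same letters) and then reapply the rule to the appropriate one of $\Gamma\vdash A$, $\Delta\vdash B$. The subtle inductive case is $\str\vdash$, say with $C\str D$ introduced inside $\Gamma$, from premises $\Pi_{\Gamma_1,\Delta_1}\vdash C$ and $D,\Pi_{\Gamma_2,\Delta_2}\vdash A\kon B$: because the letters of $C\str D$ lie in $\Gamma$, the left premise satisfies the hypothesis of Lemma~\ref{Lemma:Main0} (yielding $\Delta_1\vdash I$ and $\Gamma_1\vdash C$), whereas the right premise, with $D$ grouped on the $\Gamma$-side, satisfies the hypothesis of the proposition, so the induction hypothesis gives $D,\Gamma_2\vdash A$ and $\Delta_2\vdash B$. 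Applying $\str\vdash$ to $\Gamma_1\vdash C$ and $D,\Gamma_2\vdash A$ produces $\Gamma\vdash A$, and recombining $\Delta_2\vdash B$ with $\Delta_1\vdash I$ produces $\Delta\vdash B$; the subcase where $C\str D$ is introduced inside $\Delta$ is symmetric.

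Finally, the base case is the axiom $A\kon B\vdash A\kon B$ (the axiom $\vdash I$ cannot occur, since the consequent is a tensor). The single antecedent formula lies wholly in $\Gamma$ or wholly in $\Delta$; if it lies in $\Gamma$, then primeness of $(A\kon B),A$ against $B$ forces $B$ to be constant, so $\vdash B$ follows from Corollary~\ref{Corollary:ConstConst} while $A\kon B\vdash A$ follows by combining $B\vdash I$ (Lemma~\ref{Lemma:ConstI}) with $A\vdash A$ through a weakening, a cut and a $\kon\vdash$, and the case where it lies in $\Delta$ is symmetric. The main obstacle I anticipate is the bookkeeping in the $\str\vdash$ case: one must split the context so that exactly one premise matches Lemma~\ref{Lemma:Main0} and the other matches the induction hypothesis, and then reassemble the two target sequents without losing track of which half of the antecedent each auxiliary $\vdash I$ derivation must be absorbed into.
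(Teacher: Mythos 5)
Your proposal is correct and follows essentially the same route as the paper: induction on a cut-free derivation, with Lemma~\ref{Lemma:Main0} splitting both premises in the $\vdash\kon$ case, the $\str\vdash$ case handled by combining Lemma~\ref{Lemma:Main0} on one premise with the induction hypothesis on the other, and the auxiliary $\vdash I$ sequents reabsorbed by the same weakening-plus-cut recombination the paper uses. If anything, your explicit appeal to the induction hypothesis for the premise containing $A\kon B$ in the $\str\vdash$ case is a more precise rendering than the paper's text, which nominally attributes all four auxiliary sequents in that case to Lemma~\ref{Lemma:Main0} alone.
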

\begin{proof}
We proceed by induction on complexity of a cut-free derivation of the sequent $\Pi_{\Gamma,\Delta}\vdash A\kon B$ to prove that if $\Gamma,A$ and $\Delta,B$ are disjoint, then for some $\Pi_\Gamma$ and $\Pi_\Delta$, the sequents $\Pi_\Gamma\vdash A$ and $\Pi_\Delta\vdash B$ are derivable in $\mathcal{S}$. By Remark~\ref{Remark:PiGamma}, this suffices for our proof.

The basis of this induction, when our derivation consists just of an axiom, is straightforward. Also, the cases when our derivation ends with weakening, interchange or $\kon\vdash$ are easy to deal with by appealing to the induction hypothesis. It remains to consider the following two cases, in which we use Lemma~\ref{Lemma:Main0}.

\vspace{2ex}
\noindent\textit{Case 1.} Our derivation ends with $\vdash\kon$:
\[
\f{\Pi_{\Gamma_1,\Delta_1}\vdash A \quad \Pi_{\Gamma_2,\Delta_2}\vdash B}{\Pi_{\Gamma,\Delta}\vdash A\kon B.}
\]
By Lemma~\ref{Lemma:Main0} we have that $\Gamma_1\vdash A$, $\Delta_1\vdash I$, $\Gamma_2\vdash I$ and $\Delta_2\vdash B$ are derivable in $\mathcal{S}$, from which one easily concludes that for some $\Pi_\Gamma$ and $\Pi_\Delta$, we have that $\Pi_\Gamma\vdash A$ and $\Pi_\Delta\vdash B$ are derivable in $\mathcal{S}$.

\vspace{2ex}
\noindent\textit{Case 2.} Our derivation ends with $\str\vdash$, and we assume that the connective $\str$ is introduced within $\Delta$ (the case when it is introduced within $\Gamma$ is treated analogously):
\[
\f{\Pi_{\Gamma_1,\Delta_1}\vdash C \quad D,\Pi_{\Gamma_2,\Delta_2}\vdash A\kon B}{\Pi_{\Gamma,\Delta}\vdash A\kon B}.
\]
By Lemma~\ref{Lemma:Main0} and the induction hypothesis, we have that $\Gamma_1\vdash I$, $\Delta_1\vdash C$, $\Gamma_2\vdash A$ and $D,\Delta_2\vdash B$ are derivable in $\mathcal{S}$, from which one easily concludes that for some $\Pi_\Gamma$ and $\Pi_\Delta$, we have that $\Pi_\Gamma\vdash A$ and $\Pi_\Delta\vdash B$ are derivable in $\mathcal{S}$.
\end{proof}

\begin{rem}
Proposition~\ref{Lemma:Main1} is an analogue of \cite[Proposition 7.6]{KML71} (see also \cite[Lemma 2]{L68}). Note that in the formulation and the proof of \cite[Proposition 7.6]{KML71} the authors assumed that $\Gamma,\Delta\vdash A\kon B$ is a proper sequent (defined in terms of  ``shapes'', which are analogues of formulae in our setting). This turns out to be redundant (see \cite[page 2]{V77}). However, the propriety condition is necessary in the following proposition, which is an analogue of \cite[Proposition 7.8]{KML71} (see also \cite[Lemma 3]{L68}).
\end{rem}

\begin{prop} \label{Lemma:Main2}
Let $\Pi_{\Gamma,A\str B,\Delta}\vdash C$ be a proper sequent derivable in $\mathcal{S}$, and let $\Gamma,A$ and $B,\Delta,C$ be disjoint. Then $\Gamma\vdash A$ and $B,\Delta\vdash C$ are derivable in $\mathcal{S}$.
\end{prop}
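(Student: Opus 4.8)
The plan is to mimic the proofs of Lemma~\ref{Lemma:Main0} and Proposition~\ref{Lemma:Main1}: fix a cut-free derivation of $\Pi_{\Gamma,A\str B,\Delta}\vdash C$ and induct on its height, proving the slightly more flexible statement that \emph{some} permutations $\Pi_\Gamma\vdash A$ and $\Pi_{B,\Delta}\vdash C$ are derivable (harmless by Remark~\ref{Remark:PiGamma}). Write $P$ for the set of propositional letters occurring in $\Gamma,A$ and $Q$ for those occurring in $B,\Delta,C$; primality says $P\cap Q=\varnothing$, and I will repeatedly use that every subformula of a $\Gamma$- or $A$-formula is on the ``$P$-side'' and every subformula of a $B$-, $\Delta$- or $C$-formula is on the ``$Q$-side''. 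One small derived move is used throughout: from $\Sigma\vdash X$ and $\Theta\vdash I$ one obtains $\Sigma,\Theta\vdash X$ by weakening $\Sigma\vdash X$ to $I,\Sigma\vdash X$ and then cutting on $I$; the rules $\str\vdash$ and $\vdash\str$ serve to reassemble the split pieces. Properness is inherited by every premise (a subformula of a proper formula is proper), so the induction hypothesis may be invoked on any premise whose primality condition holds.

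For the base case the antecedent contains $A\str B$, so the only possible axiom is $A\str B\vdash A\str B$ with $\Gamma,\Delta$ empty and $C=A\str B$; primality of $A$ against $C=A\str B$ forces $A$ constant, whence $\vdash A$ follows from Corollary~\ref{Corollary:ConstConst} and $B\vdash A\str B$ is built from $B\vdash B$ using $A\vdash I$ (Lemma~\ref{Lemma:ConstI}) and $\vdash\str$. The cases where the last rule is weakening, interchange, or a $\kon\vdash$ on a formula of $\Gamma$ or $\Delta$ are routine: the rule is unary and leaves $A\str B$ untouched, so one applies the induction hypothesis to the premise and re-applies the rule. When the last rule is $\vdash\str$ (so $C=C_1\str C_2$) one moves $C_1$ to the $\Delta$-side, applies the induction hypothesis to $C_1,\Pi_{\Gamma,A\str B,\Delta}\vdash C_2$ (its primality holds since $C_1,C_2$ are on the $Q$-side), and recovers $B,\Delta\vdash C$ by one $\vdash\str$. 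When it is $\vdash\kon$ (so $C=C_1\kon C_2$), the antecedent splits between the two premises; the premise carrying $A\str B$ concludes one $C_i$ (on the $Q$-side, so the induction hypothesis applies), while the other is split by Lemma~\ref{Lemma:Main0} into a $\vdash I$ and a $\vdash C_j$ part, and one reassembles with the derived move and a single $\vdash\kon$.

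The heart of the argument is the $\str\vdash$ case. If the introduced implication is the main formula $A\str B$ itself, the two premises are $\Pi_1\vdash A$ and $B,\Pi_2\vdash C$, where each of $\Pi_1,\Pi_2$ mixes $\Gamma$- and $\Delta$-formulas; applying Lemma~\ref{Lemma:Main0} to each premise (on the left the $Q$-part goes to $\vdash I$ and the $P$-part survives to $\vdash A$, dually on the right) and combining yields $\Gamma\vdash A$ and $B,\Delta\vdash C$. If instead some other implication $D\str E$ is introduced, I send $A\str B$ into whichever premise contains it, apply the induction hypothesis there and Lemma~\ref{Lemma:Main0} to the other premise, then glue with one $\str\vdash$. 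In every such subcase the primality required by the induction hypothesis is available \emph{except} one: $D\str E\in\Gamma$ (so $D,E$ are on the $P$-side) with $A\str B$ sitting in the left premise $\Pi_{\Gamma_1,A\str B,\Delta_1}\vdash D$. Here the consequent $D$ is on the same ($P$-)side as $\Gamma_1,A$, so $\Gamma_1,A$ need not be prime to $D$, and the induction hypothesis is unavailable. This is the main obstacle, and it is exactly where properness must enter.

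I plan to break this obstruction as follows. The right premise $E,\Pi_{\Gamma_2,\Delta_2}\vdash C$ has its $P$-part $E,\Gamma_2$ prime to its $Q$-part $\Delta_2,C$, so Lemma~\ref{Lemma:Main0} gives $E,\Gamma_2\vdash I$ and $\Delta_2\vdash C$. Since $E,\Gamma_2$ is proper and its consequent $I$ is constant, Lemma~\ref{Lemma:ConstProper} forces $E,\Gamma_2$ to be constant; in particular $E$ is constant, and now \emph{properness of $D\str E$} forces $D$ to be constant as well. Applying Lemma~\ref{Lemma:ConstProper} to the left premise $\Pi_{\Gamma_1,A\str B,\Delta_1}\vdash D$ (proper antecedent, constant consequent) makes its whole antecedent constant, so $\Gamma_1,\Delta_1$ and both $A$ and $B$ are constant. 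Consequently $\Gamma=\Gamma_1,D\str E,\Gamma_2$ is a constant sequence and $A$ a constant formula, so $\Gamma\vdash A$ is derivable by Corollary~\ref{Corollary:ConstConst}; likewise $B,\Delta_1\vdash I$ is derivable, and absorbing it into $\Delta_2\vdash C$ by the derived move yields $B,\Delta\vdash C$. This closes the remaining case and makes transparent why the propriety hypothesis, absent from Proposition~\ref{Lemma:Main1}, cannot be dropped here.
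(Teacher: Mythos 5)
Your proposal is correct and follows essentially the same route as the paper's proof: the same induction on a cut-free derivation (proving the statement for permuted antecedents), the same easy unary and $\vdash\str$ cases, the same appeal to Lemma~\ref{Lemma:Main0} in the $\vdash\kon$ case and in the $\str\vdash$ subcases, and the identical treatment of the one critical subcase ($D\str E$ introduced within $\Gamma$ with $A\str B$ in the left premise) via Lemma~\ref{Lemma:ConstProper} applied twice, propriety of $D\str E$, and Corollary~\ref{Corollary:ConstConst}. Your only deviations---spelling out the axiom base case and using a weakening-plus-cut ``derived move'' where the paper glues with $I\kon A\vdash A$---are cosmetic.
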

\begin{proof}
We proceed by induction on complexity of a cut-free derivation of the sequent $\Pi_{\Gamma,A\str B,\Delta}\vdash C$ to prove that if $\Gamma,A\str B,\Delta\vdash C$ is proper, while $\Gamma,A$ and $B,\Delta,C$ are disjoint, then for some $\Pi_\Gamma$ and $\Pi_{B,\Delta}$, the sequents $\Pi_{\Gamma}\vdash A$ and $\Pi_{B,\Delta}\vdash C$ are derivable in $\mathcal{S}$. By Remark~\ref{Remark:PiGamma}, this suffices for our proof.

The basis of this induction, when our derivation consists just of an axiom, is again straightforward. Also, the cases when our derivation ends with weakening, interchange, $\kon\vdash$ or $\vdash\str$ are easy to deal with. The propriety of the conclusion of these rules guarantees that the premises are proper too---hence, one may apply the induction hypothesis. It remains to consider the following two cases.

\vspace{2ex}
\noindent\textit{Case 1.} Our derivation ends with $\vdash\kon$. Let us assume that $A\str B$ occurs in the left premise (when $A\str B$ occurs in the right premise, we proceed in the same manner):
\begin{prooftree}
\AxiomC{$\Pi_{\Gamma_1,A\str B,\Delta_1}\vdash C_1$}
\AxiomC{$\Pi_{\Gamma_2,\Delta_2}\vdash C_2$}
\RightLabel{.}
\BinaryInfC{$\Pi_{\Gamma,A\str B,\Delta}\vdash C$}
\end{prooftree}
By the induction hypothesis applied to the left premise we have that $\Pi_{\Gamma_1}\vdash A$ and $\Pi_{B,\Delta_1}\vdash C_1$ are derivable in $\mathcal{S}$. By Lemma~\ref{Lemma:Main0} applied to the right premise ($\Gamma_2$ and $C_2$ are disjoint), we have that $\Gamma_2\vdash I$ and $\Delta_2\vdash C_2$ are derivable in $\mathcal{S}$. Applying $\vdash\kon$ to $\Pi_{B,\Delta_1}\vdash C_1$ and $\Delta_2\vdash C_2$ we have that $\Pi_{B,\Delta}\vdash C$, for $\Pi_{B,\Delta}=\Pi_{B,\Delta_1},\Delta_2$, is derivable in $\mathcal{S}$. From the following derivation
\begin{prooftree}
\AxiomC{$\Gamma_2\vdash I$}
\AxiomC{$\Pi_{\Gamma_1}\vdash A$}
\BinaryInfC{$\Gamma_2,\Pi_{\Gamma_1}\vdash I\kon A$}
\AxiomC{$A\vdash A$}
\UnaryInfC{$I,A\vdash A$}
\UnaryInfC{$I\kon A\vdash A$}
\BinaryInfC{$\Gamma_2,\Pi_{\Gamma_1}\vdash A$}
\end{prooftree}
it follows that $\Pi_{\Gamma}\vdash A$, for $\Pi_{\Gamma}=\Gamma_2,\Pi_{\Gamma_1}$, is derivable in $\mathcal{S}$.

\vspace{2ex}
\noindent\textit{Case 2.} Our derivation ends with $\str\vdash$. There are three essentially different subcases of this case.

\vspace{1ex}
\noindent\textit{Case 2.1.} The connective $\str$ introduced by this rule is the main connective in $A\str B$. Then our derivation has the following form:
\begin{prooftree}
\AxiomC{$\Pi_{\Gamma_1,\Delta_1}\vdash A$}
\AxiomC{$B,\Pi_{\Gamma_2,\Delta_2}\vdash C$}
\RightLabel{.}
\BinaryInfC{$\Pi_{\Gamma,A\str B,\Delta}\vdash C$}
\end{prooftree}
By Lemma~\ref{Lemma:Main0}, we have that $\Pi_{\Delta_1}\vdash I$, $\Pi_{\Gamma_1}\vdash A$, $\Pi_{\Gamma_2}\vdash I$ and $B,\Pi_{\Delta_2}\vdash C$ are derivable in $\mathcal{S}$. From this, one easily concludes (as in Case~1) that for some $\Pi_\Gamma$ and $\Pi_{B,\Delta}$, the sequents $\Pi_\Gamma\vdash A$ and $\Pi_{B,\Delta}\vdash C$ are derivable in $\mathcal{S}$.

\vspace{1ex}
\noindent\textit{Case 2.2.} The connective $\str$ is introduced within $\Gamma$ and $A\str B$ is in the right premise. We proceed similarly (without appealing to the fact that $\Gamma,A\str B,\Delta\vdash C$ is proper), when the connective $\str$ is introduced within $\Delta$ and $A\str B$ is either in the left premise or in the right premise. Hence, our derivation is of the following form:
\begin{prooftree}
\AxiomC{$\Pi_{\Gamma_1,\Delta_1}\vdash D$}
\AxiomC{$E,\Pi_{\Gamma_2,A\str B,\Delta_2}\vdash C$}
\RightLabel{.}
\BinaryInfC{$\Pi_{\Gamma,A\str B,\Delta}\vdash C$}
\end{prooftree}
By the induction hypothesis applied to the right premise we have that $\Pi_{E,\Gamma_2}\vdash A$ (hence $E,\Pi_{\Gamma_2}\vdash A$, by Remark~\ref{Remark:PiGamma}) and $\Pi_{B,\Delta_2}\vdash C$ are derivable in $\mathcal{S}$. By Lemma~\ref{Lemma:Main0} applied to the left premise we have that $\Pi_{\Delta_1}\vdash I$ and $\Pi_{\Gamma_1}\vdash D$ are derivable in $\mathcal{S}$. Applying $\str\vdash$ to $\Pi_{\Gamma_1}\vdash D$ and  $E,\Pi_{\Gamma_2}\vdash A$ we have that $\Pi_{\Gamma}\vdash A$, for $\Pi_{\Gamma}=\Pi_{\Gamma_1},D\str E,\Pi_{\Gamma_2}$, is derivable in $\mathcal{S}$. As in Case~1, since $\Pi_{\Delta_1}\vdash I$ and $\Pi_{B,\Delta_2}\vdash C$ are derivable, we conclude that $\Pi_{B,\Delta}\vdash C$, for $\Pi_{B,\Delta}=\Pi_{\Delta_1},\Pi_{B,\Delta_2}$, is derivable in $\mathcal{S}$.

\vspace{1ex}
\noindent\textit{Case 2.3.} The connective $\str$ is introduced within $\Gamma$ and $A\str B$ is in the left premise. Then our derivation has the following form:
\begin{prooftree}
\AxiomC{$\Pi_{\Gamma_1,A\str B,\Delta_1}\vdash D$}
\AxiomC{$E,\Pi_{\Gamma_2,\Delta_2}\vdash C$}
\RightLabel{.}
\BinaryInfC{$\Pi_{\Gamma,A\str B,\Delta}\vdash C$}
\end{prooftree}
Note that we cannot apply the induction hypothesis to the left premise because $\Gamma_1$ and $D$ need not be disjoint. However, we can apply Lemma~\ref{Lemma:Main0} to the right premise (the sequences $E,\Gamma_2$ and $\Delta_2,C$ are disjoint), and we obtain that $E,\Gamma_2\vdash I$, and $\Delta_2\vdash C$ are derivable in $\mathcal{S}$.

From the fact that $\Gamma,A\str B,\Delta\vdash C$ is proper it follows that $E,\Gamma_2\vdash I$, $\Gamma_1,A\str B,\Delta_1\vdash D$ and $D\str E$ are proper too. Hence, by Lemma \ref{Lemma:ConstProper} we conclude that $E,\Gamma_2$ is constant. In particular, formula $E$ is constant, so $D$ is also constant (because $D\str E$ is proper). Using Lemma \ref{Lemma:ConstProper} again, we have that $\Gamma_1,A\str B,\Delta_1$ is constant. Together with the fact that $\Gamma_2$ is constant, this asserts that $\Gamma$ and $A$ are constant, and by Corollary \ref{Corollary:ConstConst} we conclude that $\Gamma\vdash A$ is derivable in $\mathcal{S}$.

Since $B$ and $\Delta_1$ are also constant, by the same corollary, we have that $B,\Delta_1\vdash I$ is derivable in $\mathcal{S}$. Together with the fact that $\Delta_2\vdash C$ is derivable, this entails (as in Case~1) that $\Pi_{B,\Delta}\vdash C$, for $\Pi_{B,\Delta}=B,\Delta_1,\Delta_2$, is derivable in $\mathcal{S}$.
\end{proof}

\begin{rem}
Note that Case 2.3 is the only place in the proof of Proposition~\ref{Lemma:Main2} where we use the condition that the sequent $\Gamma,A\str B,\Delta\vdash C$ is proper. However, this condition cannot be omitted. To make sure of that, consider the following derivation of the sequent $(p\str I)\str I, p\str I \vdash I$.
\begin{prooftree}
\AxiomC{$p\str I\vdash p\str I$}
\AxiomC{$I\vdash I$}
\BinaryInfC{$p\str I, (p\str I)\str I \vdash I$}
\UnaryInfC{$(p\str I)\str I, p\str I \vdash I$}
\end{prooftree}
When we take that $\Gamma=(p\str I)\str I$, $A=p$, $B=I$, $\Delta$ is empty and $C=I$, it is evident that $\Gamma,A$ and $B,\Delta,C$ are disjoint. However, it is not hard to show that $(p\str I)\str I\vdash p$ is not derivable in $\mathcal{S}$.

Moreover, by a careful examination of Case 2.3, it can be noticed that we do not use propriety of the whole sequent $\Gamma,A\str B,\Delta\vdash C$, but only of its antecedent. However, propriety of the whole sequent $\Gamma,A\str B,\Delta\vdash C$ is necessary in order to be able to apply the induction hypothesis in the case when our derivation ends with $\vdash\str$.
\end{rem}

\section{The system \texorpdfstring{$\mathcal{IL}$}{IL}}\label{IL}

In this section we modify our system $\mathcal{S}$ into the system $\mathcal{IL}$ having single premise-single conclusion sequents. The sequents of the system are of the form $G\vdash A$, where $G$ and $A$ are formulae in which $\otm$ is \emph{strictly} associative and $I$ is the \emph{strict} neutral. We omit the brackets of the form $A\kon(B\kon C)$ and $(A\kon B)\kon C$ and write simply $A\kon B\kon C$. Also, we write $A$ instead of $A\kon I$ and $I\kon A$. The outermost brackets are also omitted. We call such expressions $\alpha$-formulae.

The axiomatic sequents are of the form $A\vdash A$, and there are two structural inference figures.
\[
\f{G\kon A\kon B\kon E\vdash D}{G\kon B\kon A\kon E\vdash
D}\pravilo{\rm interchange} \mbox{\hspace{9em}} \f{C\vdash
A\quad G\kon A\kon E\vdash D}{G\kon C\kon E\vdash
D}\pravilo{cut}
\]
The operational inference figures are the following.
\[
\f{C\vdash A\quad B\kon E\vdash D}{C\kon (A\str B)\kon E
\vdash D}\pravilo{$\str\vdash$}\mbox{\hspace{3.5em}}
\f{A\kon G\vdash D}{G\vdash A\str D}\pravilo{$\vdash\str$}\mbox{\hspace{3.5em}}\f{A\vdash C\quad B\vdash D}{A\kon B\vdash C\kon D}\pravilo{$\kon\vdash\kon$}\mbox{\hspace{3.5em}}
\]
The symbol $\kon$ in these figures should be envisaged as an operation on $\alpha$-formulae, rather than a connective. Of course, this is an abuse of the notation. With $\kon$ on the left-hand side being the operation and on the right-hand side being the connective, we have the following convention.
\[
A\kon B=\left\{ \begin{array}{rl}
A\kon B, & \mbox{\rm{if both }} A \mbox{\rm{ and }} B \mbox{\rm{ are not }} I;
\\
B, & \mbox{\rm{if }} A \mbox{\rm{ is }} I;
\\
A, & \mbox{\rm{if }} B \mbox{\rm{ is }} I.
\end{array}\right .
\]
With this in mind, the rule $\vdash\str$ has the following instance.
\[
\f{A\vdash C}{I\vdash A\str C}
\]

For a standard formula $A$, one obtains the $\alpha$-formula $A^\ast$ by deleting superfluous brackets and $I$'s. In the same way, for a sequence $\Gamma=A_1,\ldots,A_n$ of formulae, we have $\Gamma^\ast=(\ldots(A_1\kon A_2)\kon \ldots \kon A_n)^\ast$. When $\Gamma$ is empty, $\Gamma^\ast$ is $I$.

The following proposition, whose proof is given in Appendix~\ref{sec:appendix}, justifies how $\mathcal{S}$ and $\mathcal{IL}$ are close to each other.

\begin{prop}\label{translation}
A sequent $\Gamma\vdash B$ has a (cut-free) derivation in $\mathcal{S}$ if and only if $\Gamma^\ast\vdash B^\ast$ has a (cut-free) derivation in $\mathcal{IL}$.
\end{prop}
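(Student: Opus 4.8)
The plan is to prove both implications by induction on the height of a derivation, translating one inference figure at a time; the only genuine work lies in reconciling the strictness conventions of $\mathcal{IL}$ (strict associativity of $\kon$ and strictness of the unit $I$) with the sequence-based bookkeeping of $\mathcal{S}$. Before starting I would record two auxiliary facts about $\mathcal{S}$. First, an \emph{isomorphism lemma}: for every standard formula $A$, both $A\vdash A^\ast$ and $A^\ast\vdash A$ are derivable in $\mathcal{S}$ (here $A^\ast$ is read as an ordinary formula with a fixed bracketing and its $I$'s deleted). This follows by a routine induction on $A$, using the easily derivable structural isomorphisms $I\kon A\vdash A$, $A\vdash I\kon A$, $(A\kon B)\kon C\vdash A\kon(B\kon C)$ and their converses, together with $\kon\vdash$, $\vdash\kon$, $\vdash\str$, $\str\vdash$ and cut. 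Second, combining this lemma, Remark~\ref{atomize} (which trades a comma in the antecedent for a $\kon$ and back), weakening (to insert an $I$) and a cut against the axiom $\vdash I$ (to delete an $I$), I would obtain an \emph{invariance lemma}: $\Gamma\vdash B$ is derivable in $\mathcal{S}$ if and only if the single-antecedent sequent $\Gamma^\ast\vdash B^\ast$ is derivable in $\mathcal{S}$. With these in hand the two directions become bookkeeping.

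For the direction from $\mathcal{S}$ to $\mathcal{IL}$, I would induct on a derivation of $\Gamma\vdash B$ in $\mathcal{S}$ and show $\Gamma^\ast\vdash B^\ast$ is derivable in $\mathcal{IL}$. The two axioms translate to the $\mathcal{IL}$ axiom $A^\ast\vdash A^\ast$ and to $I\vdash I$. The crucial observation is that, because $\kon$ is strictly associative and $I$ is strict, the rules \emph{weakening} and $\kon\vdash$ become \emph{vacuous} after translation: one checks $(I,\Gamma)^\ast=\Gamma^\ast$ and $(\Gamma,A\kon B,\Delta)^\ast=(\Gamma,A,B,\Delta)^\ast$, so the translated conclusion already coincides with the translated premise. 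Each remaining rule of $\mathcal{S}$ maps to exactly one rule of $\mathcal{IL}$: interchange to interchange, cut to cut, $\vdash\kon$ to $\kon\vdash\kon$, $\str\vdash$ to $\str\vdash$, and $\vdash\str$ to $\vdash\str$ (the subcase with $\Gamma$ empty giving precisely the special instance of $\vdash\str$ recorded before the proposition). In each case I would instantiate the $\mathcal{IL}$ schema with $G=\Gamma^\ast$, $E=\Delta^\ast$, and so on, the unit conventions absorbing any component whose $\ast$-image is $I$.

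For the direction from $\mathcal{IL}$ to $\mathcal{S}$, I would read every $\alpha$-formula occurring in a derivation as an ordinary formula and prove, by induction on the $\mathcal{IL}$-derivation, that each derived sequent $G\vdash A$ is derivable in $\mathcal{S}$; the invariance lemma then yields $\Gamma\vdash B$ whenever $G=\Gamma^\ast$ and $A=B^\ast$. The axiom is immediate. Each $\mathcal{IL}$ rule is simulated by first using Remark~\ref{atomize} to split the relevant antecedent $\kon$'s into commas, then applying the matching $\mathcal{S}$ rule, and finally using Remark~\ref{atomize} again (and, where a factor is $I$, weakening or a cut against $\vdash I$) to reassemble the conclusion: interchange via $\mathcal{S}$-interchange, cut via $\mathcal{S}$-cut, $\kon\vdash\kon$ via $\vdash\kon$ followed by $\kon\vdash$, $\str\vdash$ via $\str\vdash$, and $\vdash\str$ via $\vdash\str$.

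The main obstacle is not any single rule but the disciplined handling of the unit throughout: I must consistently use that inserting an $I$ into an antecedent is weakening while \emph{removing} one is not primitive and must be effected by a cut with $\vdash I$, and I must verify that the $\mathcal{IL}$ convention $I\kon X=X$ is honoured in every instantiation (for instance in $\kon\vdash\kon$ or $\str\vdash$ when one of the displayed $\alpha$-formulae is $I$). Once the isomorphism and invariance lemmas are established these verifications are mechanical, which is why the proposition is genuinely straightforward.
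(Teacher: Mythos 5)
Your proposal is correct and follows essentially the same route as the paper: both directions by induction on derivations, with the $\mathcal{S}$-to-$\mathcal{IL}$ direction trivial because weakening and $\kon\vdash$ become vacuous under $(\cdot)^\ast$, and the converse handled by Remark~\ref{atomize} together with routine unit bookkeeping (your isomorphism and invariance lemmas are exactly the details the paper's one-line proof leaves implicit). No gaps; you have simply written out what the paper calls straightforward.
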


A language for coding derivations in $\mathcal{IL}$ is based on terms with types. In the expression $f\colon A\vdash B$, we say that $f$ is a \emph{term} whose \emph{type} is $A\vdash B$, where $A$ and $B$ are $\alpha$-formulae. The \emph{primitive} terms are: $\mj_A\colon A\vdash A$, $\mc_{B,A}\colon B\kon A\vdash A\kon B$, $\eta_{A,B}\colon B\vdash A\str(A\kon B)$ and $\varepsilon_{A,B}\colon A\kon(A\str B)\vdash B$. The terms are built according to the following rules:

\vspace{1ex}

(1) if $f\colon  A\vdash B$ and $g\colon  B\vdash C$ are terms, then
$(g\circ f)\colon  A\vdash C$ is a term;

\vspace{1ex}

(2) if $f_1\colon  A_1\vdash B_1$ and $f_2\colon  A_2\vdash B_2$ are terms,
then $(f_1\kon f_2)\colon  A_1\kon A_2\vdash B_1\kon B_2$ is a term;

\vspace{1ex}

(3) if $f\colon  B_1\vdash B_2$ is a term and $A$ is an $\alpha$-formula, then
$(A\str f)\colon  A\str B_1\vdash A\str B_2$ is a term.

\begin{rem}\label{language1}
We assume that the language of terms is \emph{strict} in the similar sense as the language of $\alpha$-formulae---we identify the terms $f\kon(g\kon h)$, $(f\kon g)\kon h$ with $f\kon g\kon h$, and $f\kon\mj_I$, $\mj_I\kon f$ with $f$. Also, we omit the outermost brackets in terms.

With our convention for $\alpha$-formulae, $\mc_{I,I}$ has the type $I\vdash I$. Moreover, $\eta_{I,I}\circ\varepsilon_{I,I}\colon I\str I\vdash I\str I$ and $\varepsilon_{I,I}\circ\eta_{I,I}\colon I\vdash I$ are well defined terms.
\end{rem}

The terms are taken modulo congruence generated by the following equalities:
\begin{equation}\label{cat1}
f\circ\mj_A=f\quad\mbox{\rm and}\quad \mj_B\circ f=f,\quad \mbox{\rm for}\; f\colon A\vdash B,
\end{equation}
\begin{equation}\label{cat2}
h\circ(g\circ f)=(h\circ g)\circ f,
\end{equation}
\begin{equation}\label{fun1}
\mj_A\kon \mj_B=\mj_{A\kon B},
\end{equation}
\begin{equation}\label{fun2}
(g_1\kon g_2)\circ(f_1\kon f_2)=(g_1\circ f_1)\kon(g_2\circ f_2),
\end{equation}
\begin{equation}\label{nat}
\mc_{A',B'}\circ(f\kon g)=(g\kon f)\circ \mc_{A,B},\quad \mbox{\rm for}\; f\colon A\vdash A'\; \mbox{\rm and}\; g\colon B\vdash B',
\end{equation}
\begin{equation}\label{iso}
\mc_{B,A}\circ \mc_{A,B}=\mj_{A\kon B},
\end{equation}
\begin{equation}\label{coh}
\mc_{A\kon B,C}=(\mc_{A,C}\kon \mj_B)\circ(\mj_A\kon
\mc_{B,C}),
\end{equation}
\begin{equation}\label{fun2str}
A\str(g\circ f)=(A\str g)\circ (A\str f),
\end{equation}
\begin{equation}\label{nateta}
\eta_{A,B'}\circ f=(A\str(\mj_A\kon f))\circ\eta_{A,B}, \quad \mbox{\rm for}\; f\colon B\vdash B',
\end{equation}
\begin{equation}\label{fun1str}
A\str\mj_B=\mj_{A\str B},
\end{equation}
\begin{equation}\label{nateps}
\varepsilon_{A,B'}\circ(\mj_A\kon(A\str
f))=f\circ\varepsilon_{A,B}, \quad \mbox{\rm for}\; f\colon B\vdash B',
\end{equation}
\begin{equation}\label{triang1}
\varepsilon_{A,A\kon B}\circ (\mj_A\kon \eta_{A,B})=\mj_{A\kon B},
\end{equation}
\begin{equation}\label{triang2}
(A\str\varepsilon_{A,B})\circ \eta_{A,A\str B}=\mj_{A\str B}.
\end{equation}
Note that only terms with the same type could be equal. In the proof below we will not mention very frequent application of the equality \ref{cat2} and we will omit the brackets of the form $(h\circ g)\circ f$ and $h\circ(g\circ f)$.

Every derivation of $A\vdash B$ in $\mathcal{IL}$ could be coded by a term whose type is $A\vdash B$. In the coding below, we omit the indices when they are clear from contexts and also we write $A\kon$, $\kon A$ instead of $\mj_A\kon$, $\kon\mj_A$. The axiomatic sequent $A\vdash A$ is coded by $\mj_A\colon A\vdash A$. We assume that the derivations of premisses in the inference figures are already coded by terms $f$ or $g$ and then we obtain the following:
\[
\f{f\colon G\kon A\kon B\kon E\vdash D}{f\circ (G\kon
\mc\kon E)\colon G\kon B\kon A\kon E\vdash
D}\pravilo{interchange}
\]
\[
\f{f\colon C\vdash A\quad g\colon G\kon A\kon E\vdash D}{g\circ
(G\kon f\kon E)\colon G\kon C\kon E\vdash D}\pravilo{cut}
\]
\[
\f{f\colon C\vdash A\quad g\colon B\kon E\vdash
D}{g\circ(\varepsilon\kon E)\circ(f\kon(A\str
B)\kon E)\colon C\kon (A\str B)\kon E\vdash
D}\pravilo{$\str\vdash$}
\]
\[
\f{f\colon A\kon G\vdash D}{(A\str f)\circ \eta\colon G\vdash A\str
D}\pravilo{$\vdash\str$}
\]
\[
\f{f\colon A\vdash C\quad  g\colon B\vdash D}{f\kon g\colon A\kon B\vdash C\kon D}\pravilo{$\kon\vdash\kon$}
\]

\begin{prop}\label{coding}
For every term $f\colon A\vdash B$ there is a derivation of $A\vdash B$ in $\mathcal{IL}$ coded by a term equal to $f$.
\end{prop}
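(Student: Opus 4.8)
The plan is to induct on the structure of the term $f$ (following the three formation rules (1)--(3)), showing that every $f$ is equal, modulo the congruence generated by \ref{cat1}--\ref{triang2}, to a term that codes an actual $\mathcal{IL}$-derivation; call such terms \emph{codeable}. Two closure properties will carry the two binary formation rules almost for free. Codeable terms are closed under $\circ$: if $f\colon C\vdash A$ and $g\colon A\vdash D$ code derivations, then instantiating the cut coding with empty $G$ and $E$ (so that $G\kon f\kon E$ reduces to $f$ by strictness, Remark~\ref{language1}) exhibits $g\circ f$ as the code of the derivation obtained by cut. Codeable terms are also closed under $\kon$: the $\kon\vdash\kon$ coding sends codes $f\colon A\vdash C$ and $g\colon B\vdash D$ directly to the code $f\kon g$. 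Thus the inductive cases (1) and (2) are immediate once the primitive terms are shown codeable.

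First I would dispatch the base cases by peeling off the primitives with trivial contexts. The identity $\mj_A$ codes the axiom $A\vdash A$. For $\mc_{B,A}$, apply the interchange coding to the axiom $\mj_{A\kon B}$ with empty $G,E$: the result is $\mj_{A\kon B}\circ\mc_{B,A}=\mc_{B,A}$ by \ref{cat1}. For $\varepsilon_{A,B}$, apply the $\str\vdash$ coding with $f=\mj_A$, $g=\mj_B$ and empty $E$; using \ref{fun1} and \ref{cat1} the resulting code collapses to $\varepsilon_{A,B}$. For $\eta_{A,B}$, apply the $\vdash\str$ coding to the axiom $\mj_{A\kon B}$ (with $G=B$), obtaining $(A\str\mj_{A\kon B})\circ\eta_{A,B}=\eta_{A,B}$ by \ref{fun1str} and \ref{cat1}.

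The hard part will be case (3), the constructor $A\str f$, because $\mathcal{IL}$ has no inference figure that directly introduces $A\str$ on both sides of a sequent; this is where the genuine work lies. By the induction hypothesis $f\colon B_1\vdash B_2$ is codeable, and since $\varepsilon_{A,B_1}$ is codeable, the composition closure yields that $f\circ\varepsilon_{A,B_1}\colon A\kon(A\str B_1)\vdash B_2$ is codeable. Applying the $\vdash\str$ coding to it produces the codeable term $\bigl(A\str(f\circ\varepsilon_{A,B_1})\bigr)\circ\eta_{A,A\str B_1}\colon A\str B_1\vdash A\str B_2$. It then remains to verify that this term equals $A\str f$: using functoriality of $\str$ in \ref{fun2str} I would rewrite it as $(A\str f)\circ\bigl((A\str\varepsilon_{A,B_1})\circ\eta_{A,A\str B_1}\bigr)$, after which the triangular equality \ref{triang2} collapses the parenthesised composite to $\mj_{A\str B_1}$, leaving $A\str f$ by \ref{cat1}. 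This closes the induction and proves the proposition.
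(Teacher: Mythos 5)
Your proposal is correct and follows essentially the same route as the paper: induction on the structure of $f$, with the primitive terms dispatched by instantiating the codings with trivial contexts, cases (1) and (2) handled by cut and $\kon\vdash\kon$, and the crucial case $A\str f$ resolved by applying $\vdash\str$ to a code of $f\circ\varepsilon_{A,B_1}$ and collapsing via \ref{fun2str}, \ref{triang2} and \ref{cat1}. The only cosmetic difference is that you obtain the derivation of $A\kon(A\str B_1)\vdash B_2$ by cutting the $\varepsilon$-derivation against $f$, whereas the paper uses a single $\str\vdash$ with left premise $\mj_A$ and right premise a code of $f$; both yield equal terms by the same equalities.
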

\begin{proof}
We proceed by induction on complexity of $f$. Let us consider the primitive term $\eta_{A,B}\colon B\vdash A\str(A\kon B)$. Then the following derivation in $\mathcal{IL}$
\[
\f{A\kon B\vdash A\kon B}{B\vdash A\str
(A\kon B)}
\]
is coded by the term $(A\str \mj_{A\otimes B})\circ\eta_{A,B}$.
From \ref{fun1str} and \ref{cat1} it follows that $\eta_{A,B}=(A\str \mj_{A\otimes B})\circ\eta_{A,B}$. We proceed in the same manner for other primitive terms.

In the induction step the cases corresponding to (1) and (2) in the inductive definition of terms are trivial and we consider just the case when $f\colon A\str B_1\vdash A\str B_2$ is of the form $A\str g$, for $g\colon B_1\vdash B_2$. By the induction hypothesis, there exists a term $g'$  equal to $g$, which is the code for a derivation of $B_1\vdash B_2$. Then the following derivation in $\mathcal{IL}$
\[
\f{\f{\mj_A\colon A\vdash A\quad g'\colon B_1\vdash B_2}{A\kon(A\str B_1)\vdash B_2}}{A\str B_1\vdash A\str B_2}
\]
is coded by the term
\[
(A\str(g'\circ(\varepsilon_{A,B_1}\circ(\mj_A\kon(A\str B_1)))))\circ \eta_{A,A\str B_1}.
\]
This term is by \ref{fun1} and \ref{cat1} equal to $(A\str(g'\circ\varepsilon_{A,B_1}))\circ \eta_{A,A\str B_1}$, which is by \ref{fun2str}, \ref{triang2} and \ref{cat1} equal to $A\str g'$, and hence to $A\str g$.
\end{proof}

\begin{definition}
An $\alpha$-formula is called \emph{prime} when it is not of the form $A\kon B$. If $A_1,\ldots,A_n$ are prime, then they are the \emph{prime factors} of $A_1\kon\ldots\kon A_n$.
\end{definition}

A proof of the following result is left for Appendix~\ref{sec:appendix}.

\begin{thm}[Cut-elimination strengthened]\label{cutelim1}
If $f\colon A\vdash B$ is a code of a derivation in $\mathcal{IL}$, then there is a cut-free derivation in $\mathcal{IL}$ of the sequent $A\vdash B$ coded by a term equal to~$f$.
\end{thm}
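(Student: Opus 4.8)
The plan is to run a Gentzen-style cut-elimination, while certifying at every step that the rewriting of the derivation is mirrored by an equality of codes provable from \ref{cat1}--\ref{triang2}. Since this congruence is closed under the three term-forming operations ($\circ$, $\kon$ and $A\str(-)$), it suffices to treat a \emph{single} cut whose two immediate subderivations are already cut-free: in an arbitrary derivation I would pick an \emph{uppermost} cut (one with no cut above it, so its premises are cut-free), replace the subderivation ending in that cut by a cut-free one of the same sequent with an equal code, and splice it back. This lowers the number of cuts by one and, because we have replaced a subterm by an equal subterm, leaves the total code unchanged up to the congruence; iterating on the number of cuts yields the theorem. Note that choosing an uppermost cut is what guarantees that, when we push the cut upward, we never have to permute it past another cut.

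The elimination of one such cut proceeds by a double induction: the \emph{outer} induction is on the degree of the cut (the number of $\kon$ and $\str$ in the cut formula $A$), and the \emph{inner} induction is on its rank. Each reduction will strictly decrease the pair (degree, rank) in lexicographic order. The base/rank-$2$ cases are those in which the cut formula is principal on both sides. If one premise is an axiom $A\vdash A$, its code is $\mj_A$ and equation \ref{cat1} deletes the cut. If the cut formula is $A_1\kon A_2$, necessarily introduced on the left by $\kon\vdash\kon$, I would replace the single cut by two cuts on $A_1$ and $A_2$ of strictly smaller degree, the code equality being bifunctoriality \ref{fun2} together with \ref{fun1}; the new cuts again have cut-free premises and are removed by the outer induction hypothesis. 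The decisive case is the cut formula $A\str B$, introduced on the left by $\vdash\str$ (so the left code has the shape $(A\str f')\circ\eta_{A,C}$) and on the right by $\str\vdash$ (so the right code contains a factor $\varepsilon_{A,B}$); I would reduce it to cuts on the subformulae $A$ and $B$. Here the code equality is the heart of the matter: after factoring out the common $\kon E$ by \ref{fun2}, one applies the naturality equation \ref{nateps} to pull $f'$ through $\varepsilon_{A,B}$ and then the triangle equation \ref{triang1} to cancel the $\eta$--$\varepsilon$ pair (the dual pair \ref{nateta}, \ref{triang2} serving for the symmetric framing). The degenerate cut formula $I$, whose rank in the right premise is by convention $1$, is handled as a further special base case.

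In the permutation cases (rank $>2$) the cut formula has a successor in one of the premises, and I would permute the cut past the last rule of that premise, strictly decreasing the rank while keeping the degree fixed. Each permutation is licensed by a single equation: past interchange by naturality of $\mc$ \ref{nat}; past a side-premise of $\str\vdash$ or $\kon\vdash\kon$ in which the cut formula does not occur by bifunctoriality \ref{fun2}; and past $\vdash\str$ by functoriality of $\str$ \ref{fun2str} combined with \ref{nateta} (resp.\ \ref{nateps}). Associativity \ref{cat2}, the unit laws \ref{cat1}, and \ref{fun1} are used silently throughout to normalise the resulting composites.

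The main obstacle I anticipate is not the shape of the reductions, which are the familiar Gentzen ones, but the discipline of certifying each code equality: one must keep the indices on $\eta$, $\varepsilon$ and $\mc$ consistent with the strict conventions of Remark~\ref{language1}, carefully track the non-standard successor/rank conventions for $\kon\vdash\kon$ and for the invisible $I$ cut formula, and check in every permutation that the lexicographic pair (degree, rank) genuinely decreases so that the double induction is well-founded. The logical reduction for $A\str B$ in particular requires the naturality and triangle equations to be applied in exactly the right orientation, and verifying that the codes of the original and reduced derivations coincide there is where the bulk of the calculation, and the real content of the \emph{strengthening}, resides.
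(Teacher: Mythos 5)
Your proposal takes essentially the same route as the paper's proof: reduce to a single final cut whose premises are cut-free (using closure of the congruence under the term-forming operations), then run a double induction on the lexicographically ordered pair (degree, rank), certifying each Gentzen reduction by the same equations --- \ref{cat1} for axioms, \ref{fun2} for splitting a $\kon$-cut, \ref{nateta}, \ref{nateps} and \ref{triang1} for the principal $\str$-cut, \ref{nat} for interchange involving the cut formula, and \ref{fun2str} with \ref{nateta} for $\vdash\str$ --- with the same rank conventions for $\kon\vdash\kon$ and the invisible $I$. The only cosmetic divergence is that the paper's case of permuting the cut past $\str\vdash$ in the left premise also requires \ref{nat} and \ref{iso}, since fresh interchanges must be inserted, but this fits squarely within your scheme.
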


From Proposition~\ref{coding} and Theorem~\ref{cutelim1} one can deduce the following.
\begin{cor}\label{cutelim2}
For every term $f$, there is a cut-free derivation in $\mathcal{IL}$ coded by a term equal to $f$.
\end{cor}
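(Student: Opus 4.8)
The plan is to deduce Corollary~\ref{cutelim2} by combining the two results that immediately precede it, namely Proposition~\ref{coding} and Theorem~\ref{cutelim1}. Proposition~\ref{coding} tells us that every term corresponds (up to the term congruence) to the code of an actual derivation in $\mathcal{IL}$, while Theorem~\ref{cutelim1} tells us that the code of any derivation can be replaced by the code of a \emph{cut-free} derivation of the same sequent, again up to the congruence. The corollary is simply the composition of these two facts, the only subtlety being to thread the equalities of terms through both invocations.

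Concretely, I would argue as follows. Let $f\colon A\vdash B$ be an arbitrary term. By Proposition~\ref{coding}, there is a derivation $\mathcal{D}$ of $A\vdash B$ in $\mathcal{IL}$ whose code $f'$ satisfies $f'=f$ (equality in the term congruence). Now apply Theorem~\ref{cutelim1} to the code $f'$: since $f'$ codes the derivation $\mathcal{D}$ in $\mathcal{IL}$, the theorem yields a cut-free derivation $\mathcal{D}'$ of $A\vdash B$ coded by a term $f''$ with $f''=f'$. By transitivity of the congruence, $f''=f'=f$, so $\mathcal{D}'$ is the desired cut-free derivation coded by a term equal to $f$.

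There is really no substantial obstacle here: the corollary is a purely formal chaining of two already-established statements, and the work has all been done in proving Proposition~\ref{coding} (which handles the passage from arbitrary terms to codes of derivations, including the nontrivial case of the $A\str g$ constructor) and in proving Theorem~\ref{cutelim1} (which carries out the entire inductive cut-elimination procedure). The single point requiring a moment's care is the hypothesis of Theorem~\ref{cutelim1}: it applies to \emph{codes of derivations}, not to arbitrary terms, which is precisely why Proposition~\ref{coding} must be invoked first to replace the arbitrary term $f$ by the code $f'$ of a genuine derivation. Once that matching of hypotheses is observed, the transitivity of the term congruence closes the argument in a single line.
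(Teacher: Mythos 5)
Your proof is correct and follows exactly the route the paper intends: the paper itself introduces Corollary~\ref{cutelim2} with the words ``From Proposition~\ref{coding} and Theorem~\ref{cutelim1} one can deduce the following,'' which is precisely your chaining of the two results via transitivity of the term congruence. Your observation that Theorem~\ref{cutelim1} applies only to codes of derivations, so Proposition~\ref{coding} must be invoked first, is the right (and only) point of care.
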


\begin{definition}\label{assorted1}
A term is called \emph{central} when it is free of  $\eta$, $\varepsilon$ and the operation $A\str$. (For example, $\mc_{(p\str q)\kon q,p}\circ (\mj_{p\str q}\kon \mc_{p,q})$ is central.) An $\alpha$-formula is called \emph{constant} when it does not contain letters. An $\alpha$-formula is called \emph{assorted} when all its non-constant prime factors are mutually distinct. (For example, $(I\str I)\kon((p\kon q)\str r)\kon ((q\kon p)\str r)\kon(I\str I)$ is assorted.)
\end{definition}

\begin{rem}
For a central term $f\colon A\vdash B$ it holds that if one of $A$ or $B$ has no constant prime factors, or is equal to $I$ or is assorted, then the same holds for the other. In the first case, we say that $f$ is a \emph{non-constant central} term. In the second case, we say that $f$ is an $I$-\emph{central} term and in the last case we say that it is an \emph{assorted central} term. If a term is non-constant or $I$-central, then we call it \emph{reduced} central term.
\end{rem}

\begin{definition}\label{invertible}
A term $f\colon A\vdash B$ is \emph{invertible}, when there exists a term $g\colon B\vdash A$ such that $g\circ f=\mj_A$ and $f\circ g=\mj_B$. In this case, we call $g$ the \emph{inverse} of $f$ and denote it by $f^{-1}$.
\end{definition}

\begin{rem}\label{ItoI}
Note that $\varepsilon_{I,A}\colon I\str A\vdash A$ and $\eta_{I,A}\colon A\vdash I\str A$ are inverse to each other. We have that $\varepsilon_{I,A}\circ \eta_{I,A}=\mj_A$ is just an instance of \ref{triang1}, and for $\eta_{I,A}\circ \varepsilon_{I,A}=\mj_{I\str A}$ we rely on \ref{nateta} and then on \ref{triang2}. In particular, $\varepsilon_{I,I}\colon I\str I\vdash I$ and $\eta_{I,I}\colon I\vdash I\str I$ are inverse to each other.
\end{rem}

\begin{lem}\label{AtoI}
For every constant $\alpha$-formula $A$ there exists an invertible term $f\colon A\vdash I$.
\end{lem}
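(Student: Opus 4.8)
The plan is to induct on the total number of occurrences of $\kon$ and $\str$ in the constant $\alpha$-formula $A$. For the base case $A=I$ the term $\mj_I\colon I\vdash I$ is its own inverse. Before the inductive step I would record two elementary closure properties: a composite of invertible terms is invertible, and if $f_1\colon A_1\vdash I$ and $f_2\colon A_2\vdash I$ are invertible, then so is $f_1\kon f_2\colon A_1\kon A_2\vdash I\kon I=I$, with inverse $f_1^{-1}\kon f_2^{-1}$ (immediate from \ref{fun2} and \ref{fun1}).

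Since a prime constant $\alpha$-formula is either $I$ or of the form $B\str C$ with $B,C$ constant, the inductive step has two cases. If $A$ is not prime, then $A=A_1\kon A_2$ with $A_1,A_2$ constant and each containing fewer connectives; the induction hypothesis together with the $\kon$-closure above gives the required invertible term $A\vdash I$. The interesting case is $A=B\str C$. Here the induction hypothesis yields invertible $f_B\colon B\vdash I$ and $f_C\colon C\vdash I$, and I would first produce an invertible term $\theta_C\colon B\str C\vdash C$ and then compose with $f_C$. I would take
\[
\theta_C=\varepsilon_{B,C}\circ(f_B^{-1}\kon\mj_{B\str C})\colon B\str C\vdash C,
\qquad
\theta_C^{-1}=(B\str(f_B\kon\mj_C))\circ\eta_{B,C}\colon C\vdash B\str C,
\]
using $I\kon(B\str C)=B\str C$ and $\eta_{B,C}\colon C\vdash B\str(B\kon C)$. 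Morally, $u_C=f_B\kon\mj_C\colon B\kon C\vdash C$ is an invertible ``change of unit'', and $\theta_C$, $\theta_C^{-1}$ are its mates under the adjunction with unit $\eta_{B,-}$ and counit $\varepsilon_{B,-}$; invertibility of $u_C$ forces invertibility of its mates. Composing $f_C\circ\theta_C\colon B\str C\vdash I$ then finishes the step.

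The main obstacle is the equational verification that $\theta_C$ and $\theta_C^{-1}$ are mutually inverse. The key point---which at first looks problematic, because the equations supply naturality of $\varepsilon$ and $\eta$ only in their second argument---is that $f_B^{-1}$ never needs to be pushed through $\varepsilon$ in its first slot; instead one uses the naturality of $u_C^{-1}=f_B^{-1}\kon\mj_{(-)}$, which is just \ref{fun2}, to commute it with $B\str(f_B\kon\mj_C)$. Concretely, to check $\theta_C\circ\theta_C^{-1}=\mj_C$ I would apply \ref{fun2} to slide $f_B^{-1}\kon\mj_{B\str C}$ inward, then the counit naturality \ref{nateps}, then the triangle identity \ref{triang1}, and finally collapse $f_B\circ f_B^{-1}=\mj_I$; dually, $\theta_C^{-1}\circ\theta_C=\mj_{B\str C}$ follows from \ref{fun2}, the unit naturality \ref{nateta}, the triangle identity \ref{triang2}, and $f_B^{-1}\circ f_B=\mj_B$. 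Everything else reduces routinely to the induction hypothesis.
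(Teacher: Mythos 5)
Your proposal is correct and follows essentially the same route as the paper: the same induction on connective occurrences, the same $\kon$-case via \ref{fun2} and \ref{fun1}, and in the $\str$-case the same ingredients, since the paper's term $\varepsilon_{I,I}\circ (I\str(f_2\circ \varepsilon_{A_1,A_2}\circ (f_1^{-1}\kon\mj_{A_1\str A_2})))\circ \eta_{I,A_1\str A_2}$ reduces (by \ref{nateps} and Remark~\ref{ItoI}) exactly to your $f_C\circ\theta_C$. Your factorization through the explicit isomorphism $\theta_C\colon B\str C\vdash C$ is merely a cleaner packaging of the same argument, and your claimed equational verifications of $\theta_C\circ\theta_C^{-1}=\mj_C$ and $\theta_C^{-1}\circ\theta_C=\mj_{B\str C}$ do go through as sketched.
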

\begin{proof}
We proceed by induction on the number of occurrences of $\kon$ and $\str$ in $A$. If this number is 0, then $A$ is $I$ and the invertible term is $\mj_I$.

If $A$ is of the form $A_1\kon A_2$, then by the induction hypothesis we have invertible terms $f_1\colon A_1\vdash I$ and $f_2\colon A_2\vdash I$. By relying on the equalities \ref{fun2} and \ref{fun1}, it is easy to verify that $f_1\kon f_2\colon A\vdash I$ is invertible.

If $A$ is of the form $A_1\str A_2$, then for invertible terms $f_1\colon A_1\vdash I$ and $f_2\colon A_2\vdash I$, which exist by the induction hypothesis, consider the term
\[
\varepsilon_{I,I}\circ (I\str(f_2\circ \varepsilon_{A_1,A_2}\circ (f_1^{-1}\kon\mj_{A_1\str A_2})))\circ \eta_{I,A_1\str A_2}\colon A_1\str A_2\vdash I.
\]
With the help of Remark~\ref{ItoI}, one can prove that
\[
(A_1\str(\varepsilon_{I,A_2}\circ (f_1\kon(I\str A_2))))\circ \eta_{A_1,I\str A_2}\circ (I\str f_2^{-1})\circ\eta_{I,I}\colon I\vdash A_1\str A_2
\]
is its inverse.
\end{proof}

\begin{rem}\label{developed}
By relying on \ref{fun2}, every central term is equal to a term of the form $f_n\circ\ldots\circ f_1$ where each $f_i$ is built out of $\mj$ and $\mc$ with the help of~$\kon$. Moreover, by relying again on \ref{fun2} and on \ref{coh} when needed, one may assume that each $f_i$ contains only one occurrence of $\mc_{C,D}$, and $C$, $D$ are prime.
\end{rem}

\begin{lem}\label{balance}
For every central term $f$ there exists a reduced central term $f'$ and two invertible terms $u$ and $v$ such that $f=v^{-1}\circ f'\circ u$. Moreover, $u$ and $v$ depend just on the type of $f$, and if $f$ is assorted, then $f'$ is assorted~too.
\end{lem}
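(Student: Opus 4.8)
The plan is to take $u$ and $v$ to be the canonical maps that collapse every constant prime factor to $I$, and to let $f'$ be what $f$ does to the non-constant factors. First, fix once and for all, for each constant prime $\alpha$-formula $K$, an invertible term $\phi_K\colon K\vdash I$ provided by Lemma~\ref{AtoI}, and put $\phi_N=\mj_N$ for each non-constant prime $\alpha$-formula $N$. Writing the domain of $f$ as a $\kon$ of prime factors $A=P_1\kon\ldots\kon P_k$, let $A^\flat$ be $A$ with its constant prime factors deleted and set $u=\phi_{P_1}\kon\ldots\kon\phi_{P_k}\colon A\vdash A^\flat$ (the $I$-factors vanish by strictness); define $v\colon B\vdash B^\flat$ from the codomain $B$ in the same way. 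Each $\phi_K$ is invertible, so by \ref{fun1} and \ref{fun2} both $u$ and $v$ are invertible, and by construction they depend only on the type $A\vdash B$ of $f$. It then suffices to show that $f':=v\circ f\circ u^{-1}$ equals a reduced central term, since this immediately gives $f=v^{-1}\circ f'\circ u$.

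Granting that $f'$ is central, it is automatically reduced: neither $A^\flat$ nor $B^\flat$ has a constant prime factor, so if $A$ has at least one non-constant prime factor then $f'$ is non-constant central, whereas if every prime factor of $A$ is constant then $A^\flat=B^\flat=I$ and $f'\colon I\vdash I$ is $I$-central. For the assorted clause, observe that $A^\flat$ has exactly the non-constant prime factors of $A$; hence if these are mutually distinct (that is, $f$ is assorted), then so are those of $A^\flat$, and $f'$ is assorted too.

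The heart of the argument is to prove that $f'$ equals a central term. By Remark~\ref{developed} I may write $f=f_n\circ\ldots\circ f_1$, where each $f_i$ contains a single occurrence $\mc_{C_i,D_i}$ with $C_i,D_i$ prime, i.e.\ each $f_i$ transposes two adjacent prime factors. I argue by induction on $n$. For $n=0$ the term $f$ is an identity and $f'=\mj_{A^\flat}$. For the step, write $f=f_n\circ g$ with $g\colon A\vdash A'$; by the induction hypothesis $g=w^{-1}\circ g'\circ u$, where $w$ is the canonical collapse of $A'$ and $g'$ is central, so it suffices to show that for the single transposition $f_n\colon A'\vdash B$ the term $v\circ f_n\circ w^{-1}\colon (A')^\flat\vdash B^\flat$ is central (then $f'=(v\circ f_n\circ w^{-1})\circ g'$ is a composite of central terms). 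Since the same constant prime $\alpha$-formula always receives the same chosen map $\phi_K$, the collapses $w$ and $v$ agree on all untouched factors, and on the two swapped factors $C=C_n$, $D=D_n$ I slide the collapses through $\mc_{C,D}$ by naturality \ref{nat}. Here I use the auxiliary equality $\mc_{I,X}=\mj_X$, which is derivable from \ref{coh} and \ref{iso} together with the strictness convention: if both $C,D$ are non-constant the swap survives as the central term $\mc_{C,D}$; if at least one of them is constant, its collapse to $I$ turns the swap into an identity by \ref{nat} and $\mc_{I,X}=\mj_X$, again leaving a central term. In every case $v\circ f_n\circ w^{-1}$ is central, completing the induction.

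The main obstacle is precisely this last naturality bookkeeping: one must match the canonical collapse maps across the transposition and check that swaps meeting a collapsed constant factor vanish while swaps of two non-constant factors persist. Once $\mc_{I,X}=\mj_X$ is established, each case reduces to a short computation with \ref{nat}, \ref{fun1} and \ref{fun2}; the remaining claims---reducedness, the assorted clause, and the dependence of $u$ and $v$ only on the type---are immediate from the construction.
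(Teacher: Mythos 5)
Your proposal is correct and follows essentially the same route as the paper: both define $u$ and $v$ as factorwise collapses of the constant prime factors via Lemma~\ref{AtoI}, put $f$ in the developed form of Remark~\ref{developed}, and push the collapses through the adjacent transpositions by \ref{fun2} and \ref{nat}, arguing by induction on the number of occurrences of $\circ$. The only cosmetic difference is that you invoke $\mc_{I,X}=\mj_X$ (the paper's Remark~\ref{cII}, proved independently from \ref{coh} and \ref{iso}) to turn degenerate swaps into identities, whereas the paper simply leaves such $\mc_{I,\cdot}$ subterms in $f'$, which are central anyway.
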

\begin{proof}
Let $f\colon A\vdash B$ be a central term and let $B_1,\ldots, B_m$ be the prime factors of $B=B_1\kon\ldots\kon B_m$. We define $v$ to be the term $v_1\kon\ldots\kon v_m$, where $v_i\colon B_i\vdash I$ is an invertible term, which exists according to Lemma~\ref{AtoI} when $B_i$ is constant, or otherwise $v_i$ is $\mj_{B_i}$. Note that $v$ is invertible and it depends only on $B$.

By Remark~\ref{developed} we may assume that $f$ is in a ``developed'' form $f_n\circ\ldots\circ f_1$, and by relying on \ref{fun2} and \ref{nat}, the term $v\circ f$ is equal to a term of the form $f'\circ u$. Here, $f'$ is a central term whose type is either $I\vdash I$, when $A$ and $B$ are constant, or $f'\colon A'\vdash B'$ is a non-constant central term. Moreover, for $A_1,\ldots,A_m$ being the prime factors of $A=A_1\kon\ldots\kon A_m$, the term $u$ is of the form $u_1\kon\ldots\kon u_m$ where $u_i\colon A_i\vdash I$ is an invertible term, which exists according to Lemma~\ref{AtoI} when $A_i$ is constant, or otherwise $u_i$ is $\mj_{A_i}$. Note that $u$ is invertible and it depends only on $A$. The prime factors of $A'$ and $B'$ are the non-constant prime factors of $A$ and $B$, hence if $f$ is assorted, $f'$ is assorted too. A formal proof of these facts proceeds by induction on the number $n-1\geq 0$ of occurrences of $\circ$ in $f$.
\end{proof}

\begin{rem}\label{cII}
Note that for every $C$ we have that $\mc_{I,C}=\mj_C=\mc_{C,I}$, which follows from the instance $\mc_{I,C}=\mc_{I,C}\circ \mc_{I,C}$ of \ref{coh} ($A=B=I$) with the help of \ref{iso}.
\end{rem}

\begin{prop}\label{centralcoh}
Two assorted and reduced central terms of the same type are equal.
\end{prop}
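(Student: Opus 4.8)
The plan is to exploit the fact that a central term merely permutes the prime factors of its antecedent, so that the proposition reduces to a coherence statement for the symmetric group acting on \emph{distinct} labels. First I would record the elementary observation that any central term $f\colon A\vdash B$ forces $A$ and $B$ to have the same multiset of prime factors: this follows by induction on the structure of $f$, since $\mj$ and $\mc$ preserve prime factors while $\circ$ and $\kon$ merely compose and juxtapose the induced matchings. When $f$ is assorted and reduced but not $I$-central, neither $A=A_1\kon\cdots\kon A_n$ nor $B=B_1\kon\cdots\kon B_n$ has constant prime factors, and all the $A_i$ (resp.\ $B_j$) are pairwise distinct; hence there is a \emph{unique} bijection $\sigma$ with $A_i=B_{\sigma(i)}$, determined by the type alone. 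The $I$-central base case $A=B=I$ is disposed of separately: by Remark~\ref{cII} and the strictness conventions, every central term of type $I\vdash I$ equals $\mj_I$.

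Next I would put each of the two terms in the developed form of Remark~\ref{developed}, writing $f=f_n\circ\cdots\circ f_1$ with each $f_i$ equal to $\mj\kon\mc_{C,D}\kon\mj$ for prime $C,D$, i.e.\ a transposition of two \emph{adjacent} prime factors. Reading such a word as a sequence of adjacent transpositions acting on the labelled positions, and using that the labels are distinct, the permutation $\tau$ it realizes is forced: the factor $A_i$ ends in a position carrying the same label, so $A_i=B_{\tau(i)}$ and therefore $\tau=\sigma$. Thus the developed words of $f$ and of $g$ realize one and the same permutation $\sigma$.

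It then remains to establish the coherence core: two words of adjacent transpositions realizing the same permutation are equal as terms. Here I would invoke the Coxeter presentation of the symmetric group---any two such words are connected by the relations $s_i^2=1$, $s_is_j=s_js_i$ for $|i-j|\ge 2$, and the braid relation $s_is_{i+1}s_i=s_{i+1}s_is_{i+1}$---and check that each of these moves is a derivable equality of central terms: $s_i^2=1$ is an instance of \ref{iso}; the commutation of non-overlapping transpositions is an instance of bifunctoriality \ref{fun2} together with \ref{fun1}; and the braid relation follows from the hexagon \ref{coh} together with the naturality \ref{nat} and \ref{iso}. Applying the corresponding chain of term-equalities rewrites the word for $f$ into the word for $g$, yielding $f=g$, and hence also the $I$-central case trivially.

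The main obstacle is the last step, and within it the derivation of the braid relation from \ref{coh}, \ref{nat} and \ref{iso}: one first derives the mirror hexagon $\mc_{A,B\kon C}=(\mj_B\kon\mc_{A,C})\circ(\mc_{A,B}\kon\mj_C)$, and then uses naturality of $\mc$ to identify the two sortings of $A\kon B\kon C$ into $C\kon B\kon A$, namely $(\mc_{B,C}\kon\mj_A)\circ\mc_{A,B\kon C}$ and $(\mj_C\kon\mc_{A,B})\circ\mc_{A\kon B,C}$. The remaining bookkeeping---keeping the types of the $f_i$ aligned as the ambient sequence changes under each swap, and organizing the Coxeter rewriting into an honest chain of equalities in the term algebra---is routine but must be carried out with care.
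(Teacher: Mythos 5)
Your proposal is correct and takes essentially the same route as the paper's proof: pass to the developed form of Remark~\ref{developed} (after disposing of the $I$-central case via Remark~\ref{cII}), read the factors as adjacent transpositions acting on the pairwise distinct prime factors so that both terms realize the permutation forced by the common type, and verify that the symmetric-group relations hold as term equalities---$\sigma_i^2=1$ by \ref{iso}, commutation of distant transpositions by \ref{fun2}, and the braid relation from \ref{coh} together with \ref{nat} and \ref{iso}. The only difference is inessential: where you invoke the Coxeter presentation of $S_m$ as a classical fact, the paper additionally sketches a self-contained proof of it via Burnside-style normal forms.
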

\begin{proof}
If $f\colon I\vdash I$ is an $I$-central term, then this term is built out of $\mc_{I,I}$ with the help of $\circ$ and $\kon$. By Remark~\ref{cII}, $\mc_{I,I}=\mj_I$, and then $f=\mj_I$.

If $f\colon A\vdash B$ is a non-constant term, then by using Remark~\ref{cII} we can get rid of subterms of the form $\mc_{C,I}$ and $\mc_{I,C}$ in $f$. Moreover, according to Remark~\ref{developed}, we may assume that $f$ is either $\mj_A$ or of the form $f_n\circ\ldots\circ f_1$ where each $f_i$ is built out of $\mj$, $\kon$ and just one $\mc_{C,D}$, with $C$, $D$ prime, non-constant. This means that $f_i$ is of the form
\[
\mj_{A_{i_1}\kon\ldots\kon A_{i_{j-1}}}\kon\mc_{A_{i_j},A_{i_{j+1}}} \kon \mj_{A_{i_{j+2}}\kon\ldots\kon A_{i_m}}.
\]
Therefore, $f_i$ corresponds to an adjacent transposition $\sigma_j$ in the symmetric group $S_m$, where $m$ is the number of prime factors in $A$ (and $B$).

The standard presentation of the symmetric group $S_m$ in terms of adjacent transpositions $\sigma_i=(i,i+1)$ is given by the following three equalities:
\[
\sigma_i^2=1,\quad \sigma_i\sigma_j=\sigma_j\sigma_i,\; \mbox{\rm when } |i-j|>1,\quad (\sigma_i\sigma_{i+1})^3=1.
\]
These equalities correspond to \ref{iso}, some instances of \ref{fun2} and the equality
\[
(\mc_{B,C}\kon\mj_A)\circ(\mj_B\kon\mc_{A,C})\circ (\mc_{A,B}\kon\mj_C)= (\mj_C\kon\mc_{A,B})\circ(\mc_{A,C}\kon\mj_B) \circ (\mj_A\kon\mc_{B,C}),
\]
which is derivable with two applications of \ref{coh} and one of \ref{nat}. This means that by relying on the standard presentation of symmetric groups (see \cite{M896} and \cite[Note C, pp.\ 464-465]{B11}) one may conclude that two assorted and reduced central terms of the same type, since they could be presented by adjacent transpositions and they correspond to the same permutation, are equal. For the sake of completeness, we give an outline of the proof of this classical result. (An alternative proof is given in \cite[Section~5.2]{DP04}.)

If for $i\geq j$, one abbreviates the element $\sigma_i\sigma_{i-1}\ldots\sigma_{j+1}\sigma_j$ of $S_m$ by $\sigma_{[i,j]}$, the equalities listed above are sufficient to present every element of $S_m$ in a \emph{normal form} 1, or for $n\geq 1$
\[
\sigma_{[i_1,j_1]}\ldots\sigma_{[i_n,j_n]},
\]
with $i_1<i_2<\ldots<i_n$. This normal form is implicit in \cite[Note C, pp.\ 464-465]{B11}. Therefore, we have at least $m!$ such normal forms, and by induction on $m$ it is easy to show that there are exactly $m!$ of them. This means that two non-identical normal forms correspond to different permutations.

Take now two assorted and reduced central terms of the same type. Our equalities are sufficient to ``put them in normal forms''. These normal forms must be identical, otherwise, the terms could not be of the same type.
\end{proof}

\begin{cor} \label{Corollary:CentralTerms}
Two assorted central terms of the same type are equal.
\end{cor}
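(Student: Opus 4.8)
The statement to prove is Corollary~\ref{Corollary:CentralTerms}: two assorted central terms of the same type are equal. The plan is to reduce this to the already-established Proposition~\ref{centralcoh}, which handles the case of assorted \emph{and reduced} central terms. The key observation is that Lemma~\ref{balance} provides exactly the bridge between arbitrary central terms and reduced ones: for any central term $f$ there is a reduced central term $f'$ together with invertible terms $u,v$ (depending only on the type of $f$) such that $f=v^{-1}\circ f'\circ u$, and moreover the assorted property is inherited by $f'$.

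First I would take two assorted central terms $f,g\colon A\vdash B$ of the same type. Applying Lemma~\ref{balance} to each, I obtain reduced central terms $f',g'$ and invertible terms $u,v$ (for $f$) and $u',v'$ (for $g$) with $f=v^{-1}\circ f'\circ u$ and $g=v'^{-1}\circ g'\circ u'$. The crucial point is the clause in Lemma~\ref{balance} stating that $u$ and $v$ depend only on the type of the term; since $f$ and $g$ share the same type $A\vdash B$, I can conclude $u=u'$ and $v=v'$. Furthermore, since $f$ and $g$ are assorted, the last sentence of Lemma~\ref{balance} guarantees that $f'$ and $g'$ are assorted as well. By construction (via Lemma~\ref{balance}) $f'$ and $g'$ are reduced central terms, and they have the same type, namely $A'\vdash B'$ where $A',B'$ collect the non-constant prime factors of $A,B$ respectively.

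Now the two terms $f'$ and $g'$ are both assorted and reduced central terms of the same type, so Proposition~\ref{centralcoh} applies directly to give $f'=g'$. Composing on the left with $v^{-1}$ and on the right with $u$ then yields
\[
f=v^{-1}\circ f'\circ u=v^{-1}\circ g'\circ u=g,
\]
which is the desired equality. The argument is short precisely because the heavy lifting has been distributed into Lemma~\ref{balance} (the normalization to reduced form, with type-dependence of $u,v$ and preservation of the assorted property) and Proposition~\ref{centralcoh} (the symmetric-group coherence for the reduced case).

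The main thing to verify carefully, and the only genuine subtlety, is that $u=u'$ and $v=v'$ really does follow from the type-dependence clause of Lemma~\ref{balance}: one must check that the construction of $u$ and $v$ in that lemma reads off the prime factors of $A$ and $B$ alone (choosing the canonical invertible term $A_i\vdash I$ from Lemma~\ref{AtoI} on constant factors and the identity $\mj_{A_i}$ otherwise), with no dependence on the internal structure of $f$. Granting this, invertibility of $u$ and $v$ lets us cancel them and transport the equality $f'=g'$ back to $f=g$. I do not expect any computational obstacle here; the entire corollary is a formal consequence of the two preceding results.
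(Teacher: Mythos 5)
Your proof is correct and takes essentially the same route as the paper's: both decompose $f$ and $g$ via Lemma~\ref{balance}, exploit the fact that $u$ and $v$ depend only on the common type $A\vdash B$ so the same invertible pair serves both terms, and then cancel $u,v$ after applying Proposition~\ref{centralcoh} to the assorted reduced terms $f',g'$. The subtlety you flag explicitly (that $u=u'$ and $v=v'$ follows from the type-dependence clause) is exactly what the paper handles implicitly by writing a single pair $u,v$ in both decompositions.
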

\begin{proof}
Let $f,g\colon A\vdash B$ be two assorted central terms. From Lemma~\ref{balance} there exist two assorted and reduced central terms $f',g'\colon A'\vdash B'$ such that $f=v\circ f'\circ u$ and $g=v\circ g'\circ u$. Since $u$ and $v$ are invertible, we have that $f=g$ if and only if $f'=g'$. It remains to apply Proposition~\ref{centralcoh}.
\end{proof}

\begin{cor}\label{II}
Every term $f\colon I\vdash I$ is equal to $\mj_I$.
\end{cor}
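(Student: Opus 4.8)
The plan is to reduce $f$ to the code of a cut-free derivation and then read off its shape. By Corollary~\ref{cutelim2}, $f$ is equal to a term $f'$ that codes a cut-free derivation $\mathcal{D}$ of $I\vdash I$ in $\mathcal{IL}$. I would then argue by induction on the height of $\mathcal{D}$ that $f'=\mj_I$, which immediately yields $f=\mj_I$.

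For the induction step I would inspect the last inference figure of $\mathcal{D}$ (cut is excluded since $\mathcal{D}$ is cut-free). It cannot be $\str\vdash$, because the conclusion of $\str\vdash$ is $C\kon(A\str B)\kon G\vdash D$, whose antecedent contains the prime factor $A\str B$ and is therefore different from $I$. Likewise it cannot be $\vdash\str$, since the conclusion $G\vdash A\str C$ has consequent $A\str C\neq I$. Hence the last figure is an axiom, an interchange, or $\kon\vdash\kon$.

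If $\mathcal{D}$ is the axiom $I\vdash I$, then $f'=\mj_I$ by definition of the coding. If the last figure is $\kon\vdash\kon$ with conclusion $A\kon B\vdash C\kon E$, then $A\kon B=I$ and $C\kon E=I$ force both premisses to be $I\vdash I$, so $f'=g\kon h$ with $g,h$ coding the subderivations; the induction hypothesis gives $g=h=\mj_I$, whence $f'=\mj_I\kon\mj_I=\mj_I$ by~\ref{fun1}. If the last figure is an interchange, its premise is again $I\vdash I$ and, since $\mc_{I,I}=\mj_I$ by Remark~\ref{cII}, its code reduces to the code $g$ of the premise (using the strict identifications of Remark~\ref{language1}); the induction hypothesis then gives $f'=g=\mj_I$. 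This closes the induction, and each premiss used is a derivation of strictly smaller height, so the recursion terminates at axioms.

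The argument is essentially bookkeeping; the one point that needs care is verifying the exhaustiveness of the last-figure analysis together with the strict conventions, in particular that an interchange with conclusion $I\vdash I$ either does not genuinely arise or collapses via $\mc_{I,I}=\mj_I$. I expect this interchange case to be the only mild obstacle. An even shorter route is available: the same recursion shows that every sequent of $\mathcal{D}$ is $I\vdash I$ and that $\mathcal{D}$ uses only axioms, interchange and $\kon\vdash\kon$, so $f'$ is free of $\eta$, $\varepsilon$ and the operation $A\str$, i.e.\ $f'$ is a central term; as $I$ is vacuously assorted, both $f'$ and $\mj_I$ are assorted central terms of type $I\vdash I$, and Corollary~\ref{Corollary:CentralTerms} gives $f'=\mj_I$ at once.
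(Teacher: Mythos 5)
Your proposal is correct, and your closing remark is in fact the paper's own proof: the paper takes a cut-free derivation of $I\vdash I$ coded by a term $f'$ equal to $f$ (Corollary~\ref{cutelim2}), observes that $f'$ must be central---for exactly the reasons in your last-figure analysis, since neither $\str\vdash$ nor $\vdash\str$ can produce the sequent $I\vdash I$, so only $\mj$, $\mc$, $\circ$ and $\kon$ enter the code---and then invokes Proposition~\ref{centralcoh}, whose first paragraph disposes of $I$-central terms via $\mc_{I,I}=\mj_I$. Your primary route differs in that it never touches the central-term coherence machinery: you run a direct induction on the height of the cut-free derivation and discharge each of the three possible last figures by hand, using \ref{fun1} (or the strictness conventions) for $\kon\vdash\kon$, Remark~\ref{cII} together with \ref{cat1} for interchange, and the coding clause for the axiom. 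This is sound and more self-contained; what Proposition~\ref{centralcoh} buys the paper here is only its trivial first paragraph ($I$-central terms are built from $\mc_{I,I}$, hence equal $\mj_I$), which your induction re-proves inline while also making explicit the bookkeeping the paper leaves implicit, namely that each inference figure's code collapses correctly (e.g.\ a conclusion $I\vdash I$ of an interchange forces all displayed factors to be $I$, so the code degenerates to $g\circ\mc_{I,I}=g$---precisely the ``invisible interchange'' phenomenon the paper later isolates when defining clean derivations). Note also that you correctly refrain from appealing to Proposition~\ref{clean}, which would be circular, since its proof uses Corollary~\ref{II}; and your alternative appeal to Corollary~\ref{Corollary:CentralTerms} in place of Proposition~\ref{centralcoh} is legitimate, as $I$ is vacuously assorted and the $I$-central term is already reduced.
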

\begin{proof}
Take a cut-free derivation of $I\vdash I$ coded by a term $f'$ equal to $f$. Then $f'$ must be central and by Proposition~\ref{centralcoh} it is equal to $\mj_I$.
\end{proof}

\begin{definition}
A cut-free derivation in $\mathcal{IL}$ is called \emph{clean} when it does not contain applications of the rule $\kon\vdash\kon$ with an upper sequent of the form $I\vdash I$ (we call such an application \emph{redundant} $\kon\vdash\kon$), and an application of interchange which permutes $I$ with some other $\alpha$-formula (we call such an application \emph{invisible} interchange).
\end{definition}

\begin{prop}\label{clean}
For every term $f$ there is a clean derivation in $\mathcal{IL}$ coded by a term equal to $f$.
\end{prop}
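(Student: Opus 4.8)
The plan is to start from a cut-free derivation and then successively delete the two forbidden configurations, checking each time that the coded term stays in the same congruence class. By Corollary~\ref{cutelim2} there is a cut-free derivation coded by a term equal to $f$. If it is already clean we are done; otherwise it contains a redundant $\kon\vdash\kon$ or an invisible interchange, and I will show that either can be excised while preserving the coded term up to the equalities of the term algebra and strictly decreasing the total number of forbidden applications. An induction on that number then produces a clean derivation, the base case (no forbidden applications) being exactly cleanness.

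First I would treat a redundant $\kon\vdash\kon$, that is, an occurrence of
\[
\f{g'\colon A\vdash C\quad g\colon I\vdash I}{g'\kon g\colon A\kon I\vdash C\kon I}\pravilo{$\kon\vdash\kon$}
\]
(the symmetric case, with the left upper sequent equal to $I\vdash I$, is identical). By Corollary~\ref{II} the subterm $g$ equals $\mj_I$, so by the strictness convention of Remark~\ref{language1} we have $g'\kon g=g'\kon\mj_I=g'$. Since $A\kon I$ and $C\kon I$ collapse to $A$ and $C$, the conclusion of this rule is already $A\vdash C$; hence the whole application, together with the discarded subderivation of $I\vdash I$, may be replaced by the subderivation coding $g'\colon A\vdash C$, which attaches to the rest seamlessly. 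Replacing the subterm $g'\kon g$ by $g'$ leaves the coded term of the entire derivation equal to the original, because the congruence is compatible with $\circ$, $\kon$ and $A\str$. This removes one redundant $\kon\vdash\kon$ (and possibly further forbidden applications hidden in the deleted $I\vdash I$ subderivation), so the count strictly drops.

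Next I would treat an invisible interchange, which is coded by $g'\circ(G\kon\mc_{B,A}\kon E)$ with one of $A$, $B$ equal to $I$; say $A$ is $I$ (the case $B=I$ being symmetric). Then, by the strict conventions, the displayed instance of $\mc$ is $\mc_{B,I}\colon B\vdash B$, and by Remark~\ref{cII} we have $\mc_{B,I}=\mj_B$. Consequently $G\kon\mc_{B,I}\kon E=\mj_{G\kon B\kon E}$ by \ref{fun1}, whence $g'\circ(G\kon\mc_{B,I}\kon E)=g'$ by \ref{cat1}. Moreover the upper and lower sequents of this interchange coincide as $\alpha$-formulae (both equal $G\kon B\kon E\vdash D$), so the rule may simply be deleted, the coded term remaining equal to $g'$, the code of its premise. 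This removes exactly one invisible interchange, again lowering the count.

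Since each excision preserves cut-freeness, keeps the coded term within the same congruence class, and strictly decreases the total number of redundant $\kon\vdash\kon$ applications and invisible interchanges, the procedure terminates in a clean derivation coded by a term equal to $f$. I expect no serious obstacle here; the only point to monitor is that deleting the $I\vdash I$ subderivation in a redundant $\kon\vdash\kon$ cannot raise the count elsewhere---which it cannot, since no inference figure is ever added---so the induction on the number of forbidden applications is well founded.
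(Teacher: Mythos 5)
Your proposal is correct and takes essentially the same route as the paper's proof: starting from a cut-free derivation supplied by Corollary~\ref{cutelim2}, you replace each redundant $\kon\vdash\kon$ by the derivation of the other upper sequent (justified by Corollary~\ref{II}) and delete each invisible interchange (justified by Remark~\ref{cII}). The only difference is presentational—you spell out the termination induction on the number of forbidden applications and the compatibility of the congruence with $\circ$, $\kon$ and $A\str$, which the paper leaves implicit.
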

\begin{proof}
Let $\mathcal{D}$ be a cut-free derivation coded by a term equal to $f$. Replace every part of $\mathcal{D}$ ending with a redundant $\kon\vdash\kon$ with the derivation of the other upper sequent of this rule. By Corollary~\ref{II}, the obtained derivation is coded by a term equal to $f$. Then omit every invisible interchange in the obtained derivation. By Remark~\ref{cII} the new derivation is again coded by a term equal to $f$.
\end{proof}

\section{The main result}\label{main section}

This section is devoted to a proof of Theorem~\ref{main}, which is the main result of this paper. We start with rephrasing two definitions.

\begin{definition}
An $\alpha$-formula $A$ is \emph{proper} when every ordinary formula $B$ such that $B^\ast=A$ is proper. A sequent $A\vdash B$ of $\mathcal{IL}$ is \emph{proper} when both $A$ and $B$ are proper.
\end{definition}

\begin{definition}
Two sequences of $\alpha$-formulae are \emph{disjoint} when
there is no propositional letter occurring simultaneously in
a formula from one and a formula from the other.
For an $\alpha$-formula $A$, let $\Pi_A$ be obtained by permuting the prime factors in~$A$.
\end{definition}

In order to prove our main result, we need to put Lemma \ref{Lemma:ConstProper} and Propositions \ref{Lemma:Main1} and \ref{Lemma:Main2} in the context of the system $\mathcal{IL}$.  For this, we rely on Proposition~\ref{translation} together with Proposition~\ref{coding} and Corollary~\ref{cutelim2}.

\begin{lem} \label{Lemma:ConstProperIL}
Suppose that $f\colon A\vdash B$ is a term, where $A$ is proper and $B$ is constant. Then $A$ is constant.
\end{lem}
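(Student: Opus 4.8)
The plan is to transfer the statement to the system $\mathcal{S}$, where the analogous Lemma~\ref{Lemma:ConstProper} is already available, and then translate back. The sentence preceding the lemma already signals this route: Proposition~\ref{translation} is exactly the bridge between $\mathcal{IL}$ and $\mathcal{S}$, and it is tailored to carry results of this shape across.

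First I would observe that, since $f\colon A\vdash B$ is a term, Proposition~\ref{coding} yields a derivation of $A\vdash B$ in $\mathcal{IL}$. Now every $\alpha$-formula is in particular a standard formula of $\mathcal{S}$ that carries no superfluous brackets or $I$'s, so $A^\ast=A$ and $B^\ast=B$. Reading $A$ as a one-element antecedent sequence (so that $\Gamma^\ast=A^\ast=A$), Proposition~\ref{translation} then gives that $A\vdash B$ is derivable in $\mathcal{S}$.

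The second step is to check that the hypotheses of Lemma~\ref{Lemma:ConstProper} hold for this $\mathcal{S}$-sequent. Being constant depends only on the occurrences of propositional letters, and being proper depends only on the subformulae of the form $C\str D$; neither of these is affected by passing between the $\alpha$-formula and the standard-formula readings of $A$ (and of $B$), since the two expressions are literally the same string. Hence $A$ is a proper sequence of $\mathcal{S}$ and $B$ is a constant formula of $\mathcal{S}$. Applying Lemma~\ref{Lemma:ConstProper} we conclude that the sequence $A$ is constant, i.e.\ that $A$ contains no propositional letters; reading $A$ back as an $\alpha$-formula, this says precisely that $A$ is a constant $\alpha$-formula, as required.

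The only point requiring care---and what I would regard as the (modest) main obstacle---is the bookkeeping that the notions \emph{constant} and \emph{proper} are invariant under the identification of $\alpha$-formulae with standard formulae. This is immediate once one notes that on an $\alpha$-formula the operation $\ast$ is the identity and alters neither the letter-occurrences nor the $\str$-subformula structure, so that no genuine induction beyond the one already contained in Lemma~\ref{Lemma:ConstProper} is needed. In particular, the whole argument reduces to invoking Proposition~\ref{coding}, Proposition~\ref{translation} and Lemma~\ref{Lemma:ConstProper} in succession.
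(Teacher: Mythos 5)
Your proposal is correct and is essentially the paper's own argument: the paper states this lemma immediately after remarking that Proposition~\ref{translation} supplies the translation from $\mathcal{S}$, and your chain Proposition~\ref{coding} $\rightarrow$ Proposition~\ref{translation} $\rightarrow$ Lemma~\ref{Lemma:ConstProper} is precisely that intended translation. Your bookkeeping observation---that reading the $\alpha$-formulae $A$ and $B$ as standard formulae (with $\Gamma$ the one-element sequence $A$, so $\Gamma^\ast=A$ and $B^\ast=B$) changes neither constancy nor propriety---is the only point of substance, and you handle it correctly.
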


\begin{prop} \label{Lemma:Main1IL}
Let $f\colon \Pi_{A\kon B}\vdash C\kon D$ be a term, where $A,C$ and $B,D$ are disjoint. Then there exist terms $g\colon A\vdash C$ and $h\colon B\vdash D$.
\end{prop}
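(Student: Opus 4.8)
The plan is to reduce the statement to its $\mathcal{S}$-counterpart, Proposition~\ref{Lemma:Main1}, by means of the translation established in Proposition~\ref{translation}. First I would record the basic dictionary: by the coding of $\mathcal{IL}$-derivations into terms together with Proposition~\ref{coding}, there is a term of type $X\vdash Y$ if and only if the sequent $X\vdash Y$ is derivable in $\mathcal{IL}$. Thus the hypothesis that $f\colon\Pi_{A\kon B}\vdash C\kon D$ is a term is just the statement that $\Pi_{A\kon B}\vdash C\kon D$ is derivable in $\mathcal{IL}$, and the desired conclusion is the derivability of $A\vdash C$ and $B\vdash D$ in $\mathcal{IL}$.

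Next I would realise the given $\alpha$-formulae as $(-)^\ast$-images of $\mathcal{S}$-sequences. Let $\Gamma$ be the sequence of prime factors of $A$ and $\Delta$ the sequence of prime factors of $B$, each prime factor read as a standard formula; then $\Gamma^\ast=A$ and $\Delta^\ast=B$, the empty sequence covering the degenerate case $A=I$ or $B=I$. The permutation of the prime factors of $A\kon B$ exhibited by $\Pi_{A\kon B}$ is the $(-)^\ast$-image of a corresponding permutation $\Pi_{\Gamma,\Delta}$ of the sequence $\Gamma,\Delta$, and $(C\kon D)^\ast=C\kon D$. Hence, by Proposition~\ref{translation}, the $\mathcal{IL}$-derivability of $\Pi_{A\kon B}\vdash C\kon D$ gives the $\mathcal{S}$-derivability of $\Pi_{\Gamma,\Delta}\vdash C\kon D$.

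Now I would apply Proposition~\ref{Lemma:Main1} with $C,D$ playing the roles of its $A,B$. The primality hypothesis transfers directly: since $\Gamma$ and $A$ (respectively $\Delta$ and $B$) contain exactly the same propositional letters, the assumption that $A,C$ is prime to $B,D$ yields that $\Gamma,C$ is prime to $\Delta,D$. Proposition~\ref{Lemma:Main1} then produces $\mathcal{S}$-derivations of $\Gamma\vdash C$ and $\Delta\vdash D$. Translating back through Proposition~\ref{translation} gives $\mathcal{IL}$-derivations of $\Gamma^\ast\vdash C^\ast$, i.e.\ of $A\vdash C$, and of $\Delta^\ast\vdash D^\ast$, i.e.\ of $B\vdash D$; by the dictionary of the first step these derivations are coded by the required terms $g\colon A\vdash C$ and $h\colon B\vdash D$.

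The routine work, and the only place where care is needed, is the bookkeeping of the second paragraph: checking that the prime-factor decomposition of an $\alpha$-formula corresponds under $(-)^\ast$ to a sequence of standard formulae, that a permutation $\Pi_A$ of prime factors matches a permutation of that sequence (cf.\ Remark~\ref{Remark:PiGamma}), and that the neutral $I$ is handled consistently in the degenerate cases. None of this is deep, but it is the substance of the argument, since the real combinatorial content is already carried by Proposition~\ref{Lemma:Main1}.
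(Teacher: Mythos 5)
Your proposal is correct and takes essentially the same approach as the paper: the paper states this proposition without a spelled-out proof, saying only that Proposition~\ref{translation} enables the translation of Proposition~\ref{Lemma:Main1} into the setting of $\mathcal{IL}$, and your argument (including the term/derivability dictionary via Proposition~\ref{coding} and the bookkeeping on prime factors, permutations and the degenerate $I$ cases) fills in exactly that intended reduction.
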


\begin{prop} \label{Lemma:Main2IL}
Let $f\colon \Pi_{A\kon (B\str C)\kon D}\vdash E$ be a term whose type is proper, where $A,B$ and $C,D,E$ are disjoint. Then there exist terms $g\colon A\vdash B$ and $h\colon C\kon D\vdash E$.
\end{prop}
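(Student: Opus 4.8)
The plan is to deduce this proposition from its $\mathcal{S}$-counterpart, Proposition~\ref{Lemma:Main2}, by means of the translation established in Proposition~\ref{translation}. The bridge in both directions is the equivalence between the existence of a term of a given type and derivability of the corresponding sequent in $\mathcal{IL}$: by Proposition~\ref{coding} the given term $f$ yields a derivation of $\Pi_{A\kon(B\str C)\kon D}\vdash E$ in $\mathcal{IL}$, while the coding of $\mathcal{IL}$-derivations by terms (described just before Proposition~\ref{coding}) will recover the sought terms $g$ and $h$ from derivations produced at the end.

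First I would pass to the system $\mathcal{S}$. I regard the prime factors of the antecedent as a sequence of standard formulae; since $B\str C$ is itself prime, it occurs there as a single formula, and the prime factors inherited from $A$ and from $D$ form sub-sequences $\Gamma_A$ and $\Gamma_D$. Because the $\ast$-translation fixes $\alpha$-formulae, we have $\Gamma_A^\ast=A$ and $\Gamma_D^\ast=D$, and the antecedent is $(\Gamma_A,(B\str C),\Gamma_D)^\ast$; hence Proposition~\ref{translation}, together with Remark~\ref{Remark:PiGamma} to absorb the permutation $\Pi$, gives that $\Pi_{\Gamma_A,(B\str C),\Gamma_D}\vdash E$ is derivable in $\mathcal{S}$.

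Next I would verify that the hypotheses of Proposition~\ref{Lemma:Main2} hold for this sequent, with $\Gamma_A$, $B\str C$, $\Gamma_D$, $E$ playing the roles of $\Gamma$, $A\str B$, $\Delta$, $C$ there. The primeness hypothesis transfers verbatim, since ``prime to'' depends only on which propositional letters are shared and is insensitive to the decomposition of an $\alpha$-formula into its prime factors; thus $A,B$ prime to $C,D,E$ becomes $\Gamma_A,B$ prime to $C,\Gamma_D,E$. Likewise propriety of the type of $f$ gives propriety of the $\mathcal{S}$-sequent. Applying Proposition~\ref{Lemma:Main2} then produces $\mathcal{S}$-derivations of $\Gamma_A\vdash B$ and $C,\Gamma_D\vdash E$. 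Finally, translating back through Proposition~\ref{translation} gives $\mathcal{IL}$-derivations of $A=\Gamma_A^\ast\vdash B$ and $C\kon D=(C,\Gamma_D)^\ast\vdash E$, and coding these derivations yields the required terms $g\colon A\vdash B$ and $h\colon C\kon D\vdash E$.

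I expect the only delicate point to be the careful bookkeeping of this dictionary: matching the single prime factor $B\str C$ and the prime-factor sequences of $A$ and $D$ to the shape $\Gamma,A\str B,\Delta$ demanded by Proposition~\ref{Lemma:Main2}, and checking that the notions ``proper'' and ``prime to'' really do correspond across the two systems. Everything else is routine once Proposition~\ref{translation} is invoked.
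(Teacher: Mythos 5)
Your proposal is correct and is exactly the route the paper intends: the paper gives no separate proof of Proposition~\ref{Lemma:Main2IL}, stating only that Proposition~\ref{translation} transfers Proposition~\ref{Lemma:Main2} from $\mathcal{S}$ to $\mathcal{IL}$, which is what you carry out (with Proposition~\ref{coding} supplying the term/derivation correspondence). Your bookkeeping of prime factors, of the invariance of ``prime to'' and ``proper'' under the $\ast$-translation, and of the permutation via Remark~\ref{Remark:PiGamma} is sound, since the conclusion asserts only the existence of $g$ and $h$ and no equational relation to $f$.
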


Also, we will need the following auxiliary results.

\begin{lem} \label{Lemma:EvenNumber}
Let $f\colon A\vdash B$ be a term. Then every propositional letter has an even number of occurrences in $A\vdash B$.
\end{lem}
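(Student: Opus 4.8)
The plan is to prove the statement by a straightforward structural induction on the term $f$, tracking a single parity invariant. For a propositional letter $p$ and an $\alpha$-formula $A$, write $|A|_p$ for the number of occurrences of $p$ in $A$; since $I$ carries no letters and both connectives merely juxtapose their arguments, we have $|I|_p=0$, $|A\kon B|_p=|A|_p+|B|_p$ and $|A\str B|_p=|A|_p+|B|_p$ (these identities are moreover insensitive to the strict identifications built into $\alpha$-formulae). The claim to be established is that for every term $f\colon A\vdash B$ and every $p$, the sum $|A|_p+|B|_p$ is even. Since this is genuinely a property of the type, it suffices to verify it for one representative built by the term-formation rules.

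For the base cases I would check the four primitive terms directly. For $\mj_A\colon A\vdash A$ the total is $2|A|_p$; for $\mc_{B,A}\colon B\kon A\vdash A\kon B$ it is $2(|A|_p+|B|_p)$; for $\eta_{A,B}\colon B\vdash A\str(A\kon B)$ it is $2|A|_p+2|B|_p$; and for $\varepsilon_{A,B}\colon A\kon(A\str B)\vdash B$ it is again $2|A|_p+2|B|_p$. In each case the total is even because every letter occurrence on one side of the primitive is matched by a companion, so the two sides carry equal counts.

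For the induction step I would treat the three formation rules. For a tensor $f_1\kon f_2\colon A_1\kon A_2\vdash B_1\kon B_2$ the total splits as $(|A_1|_p+|B_1|_p)+(|A_2|_p+|B_2|_p)$, a sum of two even numbers by the induction hypothesis. For $A\str f\colon A\str B_1\vdash A\str B_2$ built from $f\colon B_1\vdash B_2$, the total equals $2|A|_p+(|B_1|_p+|B_2|_p)$, even by the induction hypothesis. The only case needing a word of care is composition $g\circ f\colon A\vdash C$ built from $f\colon A\vdash B$ and $g\colon B\vdash C$: here the induction hypothesis gives that $|A|_p+|B|_p$ and $|B|_p+|C|_p$ are both even, and adding them yields $|A|_p+2|B|_p+|C|_p$, so $|A|_p+|C|_p$ is even because the shared formula $B$ contributes the even amount $2|B|_p$ that cancels modulo~$2$.

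The argument contains no real obstacle; the one point worth isolating is precisely this cancellation in the composition case, which is the term-level reflection of the cut rule and expresses that a cut formula, being matched between premisses, never disturbs the parity of the letters in the surrounding context.
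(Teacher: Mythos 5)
Your proof is correct and follows essentially the same route as the paper's: induction on the complexity of $f$, with the primitive terms as trivially even base cases and the parity cancellation $a+b=(a+c)+(b+c)-2c$ across the shared middle formula in the composition case. Your extra observation that the letter counts are insensitive to the strict identifications of $\alpha$-formulae is a harmless (and accurate) refinement the paper leaves implicit.
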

\begin{proof}
We proceed by induction on complexity of $f$. If the complexity of $f$ is $0$, then $f$ is a primitive term and the lemma obviously holds. The cases when $f$ is of the form $f_1\kon f_2$ or $A\str g$ are trivial.

Suppose now that $f$ is of the form $f_2\circ f_1$ for $f_1\colon A\vdash C$ and $f_2\colon C\vdash B$. Suppose that $p$ occurs $a$ times in $A$, $b$ times in $B$ and $c$ times in $C$. From the induction hypothesis we have that $a+c$ and $b+c$ are even. Since $a+b=(a+c)+(b+c)-2c$, we conclude that $a+b$ (the number of occurrences of $p$ in $A\vdash B$) is also even.
\end{proof}

\begin{prop}\label{R1}
If $A$ is a proper and non-constant $\alpha$-formula, then there exists an $I$-free $\alpha$-formula $A'$ and an invertible term $f\colon A\vdash A'$.
\end{prop}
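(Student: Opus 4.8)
The plan is to argue by induction on the number of occurrences of $\kon$ and $\str$ in $A$, mirroring the inductive construction already used in Lemma~\ref{AtoI}. In the base case $A$ is a single prime factor with no connectives, so $A$ is either $I$ or a propositional letter; since $A$ is non-constant it is a letter, which is already $I$-free, and I take $A'=A$ with $f=\mj_A$. Throughout, the condition that $A$ be proper is what will make the $\str$ case go through: it rules out antecedent-only occurrences such as $p\str I$, which cannot be made $I$-free invertibly.

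For the induction step I split into the two ways $A$ can be decomposed. If $A=A_1\kon A_2$, then $A_1$ and $A_2$ are proper $\alpha$-formulae of strictly smaller complexity. For each factor I produce an invertible term into an $I$-free target: if $A_i$ is constant I invoke Lemma~\ref{AtoI} to get an invertible $A_i\vdash I$, and if $A_i$ is non-constant I apply the induction hypothesis to get an invertible $A_i\vdash A_i'$ with $A_i'$ being $I$-free. Tensoring the two with $\kon$ and using \ref{fun1} and \ref{fun2} (exactly as in Lemma~\ref{AtoI}) yields an invertible term; the constant factors contribute $I$ on the target side and vanish by strictness of the language of $\alpha$-formulae. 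Because $A$ is non-constant at least one factor survives, so the resulting $A'$ is a genuine $I$-free $\alpha$-formula.

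The delicate case is $A=B\str C$. Here I would first observe that propriety together with non-constancy forces $C$ to be non-constant: if $C$ were constant, propriety of $B\str C$ would make $B$ constant as well, rendering $A$ constant. Both $B$ and $C$ are then proper subformulae of smaller complexity. I rewrite the consequent by the covariant action $B\str c$, where $c\colon C\vdash C'$ is the invertible term supplied by the induction hypothesis; this is invertible with inverse $B\str c^{-1}$ by \ref{fun2str} and \ref{fun1str}. It remains to rewrite the antecedent: if $B$ is non-constant I use the induction hypothesis to obtain an invertible $b\colon B\vdash B'$ with $B'$ being $I$-free, and if $B$ is constant I use Lemma~\ref{AtoI} to obtain an invertible $b\colon B\vdash I$, in which case I additionally collapse $I\str C'$ to $C'$ using the invertible $\varepsilon_{I,C'}$ of Remark~\ref{ItoI}. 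Either way the final $A'$ (namely $B'\str C'$, or just $C'$ when $B$ is constant) is $I$-free.

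The crux, and the step I expect to demand the most care, is the contravariant rewriting of the antecedent. Writing $B_2$ for the $I$-free target of $B$ (that is, $B'$ or $I$) and $b\colon B\vdash B_2$ for the invertible term just obtained, I must build an invertible term $B\str C'\vdash B_2\str C'$. Following the pattern of the $\str$ case in Lemma~\ref{AtoI}, I form
\[
(B_2\str(\varepsilon_{B,C'}\circ(b^{-1}\kon\mj_{B\str C'})))\circ\eta_{B_2,B\str C'}\colon B\str C'\vdash B_2\str C',
\]
and exhibit as its inverse the analogous expression built from $b$ in place of $b^{-1}$. Verifying that these two terms are mutually inverse is where the genuine content lies: it reduces to the naturality equations \ref{nateta} and \ref{nateps} together with the triangle identities \ref{triang1} and \ref{triang2}, in the same spirit as the computation closing Lemma~\ref{AtoI}. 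Composing the covariant and contravariant isomorphisms---and, in the constant-antecedent case, the further isomorphism $\varepsilon_{I,C'}$---then yields the required invertible $f\colon A\vdash A'$ with $A'$ an $I$-free $\alpha$-formula.
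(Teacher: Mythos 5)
Your proposal is correct and follows essentially the same route as the paper: the same induction on the number of occurrences of $\kon$ and $\str$, the same appeal to Lemma~\ref{AtoI} for constant factors and to Remark~\ref{ItoI} (via $\varepsilon_{I,C'}$) when the antecedent of $\str$ is constant, and the same $\eta$/$\varepsilon$-based term for the contravariant rewriting, verified by \ref{fun2str}--\ref{triang2}. The only difference is cosmetic: you factor the $\str$ case as a covariant isomorphism $B\str c$ composed with a separate contravariant one, whereas the paper builds both rewritings into a single term $(A_1'\str(f_2\circ\varepsilon\circ(f_1^{-1}\kon\mj)))\circ\eta$; these agree modulo the equational theory.
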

\begin{proof}
We proceed by induction on the number $n\geq 0$ of occurrences of $\kon$ and $\str$ in $A$. If $n=0$, then $A$ must be a propositional letter $p$ and $f$ is $\mj_p$.

Assume that $A$ is of the form $A_1\kon A_2$. If both $A_1$ and $A_2$ are non-constant, then by the induction hypothesis there are $I$-free $A_1'$ and $A_2'$ as well as invertible terms $f_1\colon A_1\vdash A_1'$ and $f_2\colon A_2\vdash A_2'$. We take $A'$ to be $A_1'\kon A_2'$ and $f$ to be $f_1\kon f_2$.

If $A_1$ is constant, then $A_2$ must be non-constant, and vice versa. We apply Lemma~\ref{AtoI} together with the induction hypothesis in order to obtain invertible terms $f_1\colon A_1\vdash I$ and $f_2\colon A_2\vdash A_2'$, where $A_2'$ is $I$-free. In this case $A'$ is $A_2'$ and $f$ is $f_1\kon f_2$.

\vspace{1ex}
Finally, let $A$ be of the form $A_1\str A_2$. Since $A$ is proper and non-constant it is not possible for $A_2$ to be constant. If both $A_1$ and $A_2$ are non-constant, then by the induction hypothesis there are $I$-free $A_1'$ and $A_2'$ as well as invertible terms $f_1\colon A_1\vdash A_1'$ and $f_2\colon A_2\vdash A_2'$. We take $A'$ to be $A_1'\str A_2'$ and $f$ to be
\[
(A_1'\str(f_2\circ \varepsilon_{A_1,A_2}\circ (f_1^{-1}\kon\mj_{A_1\str A_2})))\circ \eta_{A_1',A_1\str A_2}\colon A_1\str A_2\vdash A_1'\str A_2'.
\]
With the help of equalities \ref{fun2str}-\ref{triang2}, one concludes that $f$ is invertible and its inverse is
\[
(A_1\str(\varepsilon_{A_1',A_2}\circ (f_1\kon(A_1'\str A_2))))\circ \eta_{A_1,A_1'\str A_2}\circ (A_1'\str f_2^{-1}).
\]

The case with $A_1$ constant and $A_2$ non-constant is analogous---let $f_1\colon A_1\vdash I$ be an invertible term that exists by Lemma~\ref{AtoI} and let $f_2\colon A_2\vdash A_2'$, where $A_2'$ is $I$-free, be an invertible term that exists by the induction hypothesis. An invertible $f'\colon A_1\str A_2\vdash I\str A_2'$ is constructed as $f$ in the preceding paragraph. We take now $f$ to be $\varepsilon_{I,A_2'}\circ f'\colon A_1\str A_2\vdash A_2'$ and it is invertible since $\varepsilon_{I,A_2'}$ is invertible by Remark~\ref{ItoI}. (This case is a non-essential generalization of the last case in the proof of Lemma~\ref{AtoI}.)
\end{proof}

\begin{prop}\label{R2}
There is no derivation in $\mathcal{IL}$ of $A\vdash I$, with $A$ an $I$-free $\alpha$-formula.
\end{prop}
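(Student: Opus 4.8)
The plan is to obtain the statement as an immediate contrapositive of Lemma~\ref{Lemma:ConstProperIL}, the $\mathcal{IL}$-version of Lemma~\ref{Lemma:ConstProper}. Recall that a derivation of $A\vdash I$ in $\mathcal{IL}$ is coded by a term $f\colon A\vdash I$, so it suffices to show that no such term exists. I would argue by contradiction: suppose $f\colon A\vdash I$ is a term with $A$ an $I$-free $\alpha$-formula, and show that the hypotheses of Lemma~\ref{Lemma:ConstProperIL} are met, forcing $A$ to be constant, which is absurd.

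The heart of the argument is two purely syntactic observations about an $I$-free $\alpha$-formula $A$. Since the atomic building blocks of $\alpha$-formulae are propositional letters together with the constant $I$, and $A$ contains no occurrence of $I$, every subformula of $A$ contains at least one letter. In particular $A$ itself contains a letter, so $A$ is \emph{non-constant}. Moreover, \emph{no} subformula of $A$ is constant, whence $A$ cannot contain a subformula of the shape $B\str C$ with $C$ constant and $B$ non-constant; thus $A$ is \emph{proper}, vacuously.

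With these in hand the conclusion is one step: $I$ is a constant $\alpha$-formula and $A$ is proper, so Lemma~\ref{Lemma:ConstProperIL} applied to $f\colon A\vdash I$ yields that $A$ is constant, contradicting its non-constancy. This contradiction establishes the proposition.

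I expect the only genuine subtlety to be the recognition that $I$-freeness makes propriety automatic (and non-constancy immediate), which is exactly what converts Lemma~\ref{Lemma:ConstProperIL} into the desired non-derivability result. It is worth stressing why a naive occurrence-counting argument does \emph{not} close the gap here: Lemma~\ref{Lemma:EvenNumber} only guarantees that each letter occurs an even number of times in $A\vdash I$, and the analogous signed count (letters counted with sign reversed across $\str$) is likewise satisfied by formulae such as $p\str p$; yet $p\str p\vdash I$ is not derivable. Hence it is the propriety input, rather than mere counting, that does the work, and this is precisely the feature that the proof must exploit.
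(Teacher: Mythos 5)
Your proof is correct, but it takes a genuinely different route from the paper's. The paper proves Proposition~\ref{R2} entirely inside $\mathcal{IL}$, using the cleanness machinery it has just built: by Proposition~\ref{clean} a derivation of $A\vdash I$ could be replaced by a clean one, and a minimal-complexity argument then shows no clean derivation of $A\vdash I$ with $A$ being $I$-free can exist --- the last rule of such a derivation can only be $\str\vdash$ or $\kon\vdash\kon$ (cleanness rules out invisible interchanges and redundant $\kon\vdash\kon$, cut-freeness rules out cut, and an axiom with consequent $I$ forces the antecedent $I$), and each of these rules produces a clean derivation of a strictly smaller $I$-free sequent with consequent $I$, with the base case $p\vdash I$ impossible. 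You instead reduce the statement to Lemma~\ref{Lemma:ConstProperIL}, and your two syntactic observations are exactly right: since the atoms of an $I$-free $\alpha$-formula are all letters, every subformula is non-constant, so $A$ is proper vacuously and non-constant, and the lemma (with $B=I$ constant) yields the contradiction. There is no circularity in this: Lemma~\ref{Lemma:ConstProperIL} precedes Proposition~\ref{R2} and rests on Lemma~\ref{Lemma:ConstProper} via the translation of Proposition~\ref{translation} (and on Proposition~\ref{coding} to pass from your term $f$ to a derivation), so it ultimately depends on cut-elimination in $\mathcal{S}$, just as the paper's argument depends on Theorem~\ref{cutelim1}. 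What each approach buys: yours exposes the proposition as an immediate special case of the constant/proper lemma and is essentially a one-liner given earlier results, while the paper's stays self-contained at the $\mathcal{IL}$ level and avoids re-crossing the bridge to $\mathcal{S}$. One small caveat about your closing remark: the underivability of $p\str p\vdash I$, which you invoke to show that counting arguments such as Lemma~\ref{Lemma:EvenNumber} cannot suffice, is itself an instance of the proposition being proved, so it functions as motivation rather than independent evidence --- but since your actual argument never uses it, this does not affect correctness.
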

\begin{proof}
Otherwise, there will be a clean derivation of $A\vdash I$, which is impossible. Indeed, $p\vdash I$ has no clean derivation, and if $A\vdash I$ is a sequent of lowest complexity derived by a clean derivation, then one immediately obtains a clean derivation of a sequent of lower complexity.
\end{proof}

\begin{definition}
We say that an occurrence of a propositional letter in a formula is \textit{positive} (\textit{negative}) if it occurs in an even (odd) number of antecedents of implications. The sign of occurrence in the sequent $A\vdash B$ is the same as in the formula $A\str B$. We say that the sequent is \textit{balanced} if each letter occurring in it has exactly two occurrences and they have opposite
signs. The term $f\colon A\vdash B$ is \textit{balanced} if the sequent $A\vdash B$ is balanced.
\end{definition}

\begin{rem}\label{balanced1}
From Lemma~\ref{Lemma:EvenNumber} it easily follows that every sequent in a cut-free derivation of a balanced sequent is balanced. Hence, for $A$ being $I$-free, there is no derivation of a balanced sequent $A\vdash A$ ending with $\str\vdash$.
\end{rem}

\begin{rem}\label{R3}
By Proposition~\ref{R1}, Lemma~\ref{AtoI} and Proposition~\ref{R2}, for a proper derivable sequent $A\vdash B$, there is a sequent $A'\vdash B'$ with either both $A'$, $B'$ being $I$-free, or $A'$ being $I$ and $B'$ being $I$-free, or both being $I$, and a pair of invertible terms $u\colon A\vdash A'$ and $v\colon B\vdash B'$. Moreover, it is easy to see that if $A\vdash B$ is balanced, then $A'\vdash B'$ is balanced too. For a pair of terms $f,g\colon A\vdash B$, by invertibility of $u$ and $v$, we have that $f=g$ if and only if $v\circ f\circ u^{-1}=v\circ g\circ u^{-1}$.
\end{rem}

\begin{definition} \label{Definition:Complexity}
The \textit{complexity} of a sequent is the number of occurrences of $\kon$ plus double the number of occurrences of $\str$ in it. Let the \emph{complexity} of a term be the number of symbols in it.
\end{definition}

\begin{thm}\label{main}
Two balanced terms of the same proper type are equal.
\end{thm}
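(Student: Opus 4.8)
The plan is to prove the statement by induction on the complexity of the type $A\vdash B$ (Definition~\ref{Definition:Complexity}), reducing every balanced and proper term to a canonical term determined by the type alone. The driving observation is that balancedness is extremely rigid: each letter has exactly two occurrences, of opposite sign, so the pairing of occurrences (the ``linking'' of the sequent) is forced by the type. In particular every $\alpha$-formula occurring in such a type is \emph{assorted} in the sense of Definition~\ref{assorted1}, since two equal non-constant prime factors, or two distinct ones sharing a letter, would produce two like-signed occurrences of that letter and destroy balance. Consequently every central part that appears in the argument is an assorted central term, and Corollary~\ref{Corollary:CentralTerms} will force all such parts of $f$ and $g$ to coincide once their types agree. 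I would first normalize the type using Remark~\ref{R3}: there are invertible $u\colon A\vdash A'$ and $v\colon B\vdash B'$ with $A',B'$ either $I$-free or $I$, with $A'\vdash B'$ again balanced and proper, and $f=g$ iff $v\circ f\circ u^{-1}=v\circ g\circ u^{-1}$. The type $I\vdash I$ is settled by Corollary~\ref{II}, and a type with $I$-free antecedent and consequent $I$ is vacuous by Proposition~\ref{R2}; so I may assume $A$ and $B$ are $I$-free. The base case, complexity $0$, forces $A=B=p$ for a letter $p$, whose only clean derivation (Proposition~\ref{clean}) is the axiom, coded by $\mj_p$.

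For the inductive step I would analyze the consequent $B$. If $B=X\str Y$, I transpose across the adjunction: put $\widehat f=\varepsilon_{X,Y}\circ(\mj_X\kon f)\colon X\kon A\vdash Y$. The triangle identities \ref{triang1} and \ref{triang2}, together with \ref{nateta} and \ref{nateps}, make $f\mapsto\widehat f$ a bijection onto terms of type $X\kon A\vdash Y$; this type is again balanced and proper (all signs are preserved, and subformulae of proper formulae are proper) and has strictly smaller complexity, since one $\str$ is traded for one $\kon$. Hence $f=g$ follows from $\widehat f=\widehat g$ by the induction hypothesis.

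If $B$ has $\kon$ as its main connective, $B=C\kon D$, the forced linking lets me group the prime factors of $B$ into two non-empty mutually prime blocks and to split the antecedent accordingly (up to a central permutation $\pi$), so that the priming hypothesis of Proposition~\ref{Lemma:Main1IL} is met. Working with a clean cut-free derivation I peel off the outer $\kon\vdash\kon$ and write $f=(f_1\kon f_2)\circ\pi_f$ and $g=(g_1\kon g_2)\circ\pi_g$, where $f_i,g_i$ have strictly smaller balanced and proper types. The induction hypothesis gives $f_1=g_1$, $f_2=g_2$, while $\pi_f=\pi_g$ by Corollary~\ref{Corollary:CentralTerms}, whence $f=g$. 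The remaining case is $B=p$ a single letter. Here balance forces the second occurrence of $p$: if $p$ were itself a top-level factor of $A$, splitting off the rest by Proposition~\ref{Lemma:Main1IL} would yield a derivation $A''\vdash I$ with $A''$ a non-empty $I$-free formula, contradicting Proposition~\ref{R2}; so either $A=p$ (already the base case) or the second $p$ lies inside an implicational prime factor $X\str Y$ of $A$. In the latter case I invoke Proposition~\ref{Lemma:Main2IL}---whose propriety hypothesis is exactly the standing assumption of the theorem---to peel off the corresponding $\str\vdash$, splitting the term into pieces of type $A_1\vdash X$ and $Y\kon A_2\vdash p$, modulo a central permutation. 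Once more the induction hypothesis equates the pieces and Corollary~\ref{Corollary:CentralTerms} equates the surrounding permutations.

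The main obstacle is not any individual case but the bookkeeping that makes the peeling \emph{canonical}, so that $f$ and $g$ are decomposed \emph{compatibly}: I must show that the choice of block $C\kon D$, or of the antecedent implication to resolve, can be made to depend only on the type and its forced linking; that the priming hypotheses of Propositions~\ref{Lemma:Main1IL} and \ref{Lemma:Main2IL} are always available from balancedness; and that the residual central permutations are forced to agree. The most delicate point is the antecedent-implication case, which is the sole place where propriety is used (through Proposition~\ref{Lemma:Main2IL}); since coherence genuinely fails without propriety, the argument must carefully preserve properness---and balancedness and the strict decrease of complexity---under each reduction.
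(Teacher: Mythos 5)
Your transposition step for $B=X\str Y$ is sound (the adjunction bijection $f\mapsto\varepsilon_{X,Y}\circ(\mj_X\kon f)$ preserves balance and properness, and strictly lowers complexity precisely because $\str$ is counted twice in Definition~\ref{Definition:Complexity}), and your base case and use of Remark~\ref{R3} are fine. But there is a genuine gap in the two remaining cases, and it is not mere bookkeeping. First, a concrete failure: when $B=C\kon D$, the forced linking need \emph{not} allow any grouping of the prime factors of $B$ into two mutually prime blocks. Take $\varepsilon_{p,q\kon r}\colon p\kon(p\str(q\kon r))\vdash q\kon r$: this is balanced and proper, the consequent has main connective $\kon$, yet both $q$ and $r$ link into the single antecedent factor $p\str(q\kon r)$, so the priming hypothesis of Proposition~\ref{Lemma:Main1IL} fails for the only possible split, and the clean derivation ends with $\str\vdash$, not $\kon\vdash\kon$. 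Your case analysis by the main connective of $B$ has no branch for this situation; the antecedent-implication case must be available for tensor consequents too, not only for $B=p$.

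Second, and more fundamentally, Propositions~\ref{Lemma:Main1IL} and \ref{Lemma:Main2IL} are pure \emph{existence} statements about derivability: they produce \emph{some} terms $g\colon A\vdash C$, $h\colon B\vdash D$, but say nothing about the given term $f$ being \emph{equal} to $(f_1\kon f_2)\circ\pi_f$ or to an $\str\vdash$-composite. ``Peeling off the outer $\kon\vdash\kon$'' presupposes that the clean derivation of $f$ ends with that rule (false in general, as above), and even when both clean derivations of $f$ and $g$ can be decomposed, they may end with \emph{different} rules or resolve \emph{different} antecedent implications, so the two decompositions are not a priori compatible. Establishing the equalities $f=(\mbox{pieces})\circ(\mbox{central})$ across mismatched derivation shapes is exactly the content of the paper's proof: it fixes $f\colon A\vdash B$ and $g\colon\Pi_A\vdash B$, inducts on the lexicographic pair $(n,m)$ with $m$ the \emph{sum of the complexities of the two terms}, and runs a seven-case comparison of the last rules of the two clean derivations (including the mixed cases $\kon\vdash\kon$ versus $\str\vdash$, and $\str\vdash$ versus $\str\vdash$ on different formulae), with Corollary~\ref{Corollary:CentralTerms} absorbing the residual central permutations. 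Your single induction on type complexity has no mechanism to handle these mixed cases; the ``canonical peeling determined by the type and its forced linking'' that you invoke to sidestep them is, in effect, a normal-form lemma equivalent in strength to Theorem~\ref{main} itself, and it is precisely what remains unproved in your proposal.
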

\begin{proof}
There are a lot of cases to discuss in this proof and for the sake of brevity, we will not refer in every calculation to equalities from the list given in Section~\ref{IL}, after Remark~\ref{language1}. Moreover, by appealing to Proposition~\ref{clean}, we assume that every derivation whose existence is guaranteed by Propositions \ref{Lemma:Main1IL} and \ref{Lemma:Main2IL} is clean.

Let $A\vdash B$ be the proper type of our balanced terms. By Remark~\ref{R3}, we may assume that either both $A$ and $B$ are $I$-free, or $A$ is $I$ and $B$ is $I$-free, or they are both $I$. The last case is solved by Corollary~\ref{II}.

For $A\vdash B$ as above, let $f\colon A\vdash B$ and $g\colon\Pi_A\vdash B$ be two balanced terms. By Proposition~\ref{clean}, one may assume that there are clean derivations $\mathcal{D}_f$ of $A\vdash B$ and $\mathcal{D}_g$ of $\Pi_A\vdash B$ in $\mathcal{IL}$ coded by $f$ and $g$ respectively. By our assumption it follows that every sequent in $\mathcal{D}_f$ and $\mathcal{D}_g$ is either $I$-free or its antecedent is $I$ and its consequent is $I$-free.

Let $n$ be the complexity of the sequent $A\vdash B$, and let $m$ be the sum of complexities of the terms $f$ and $g$. We proceed by induction on lexicographically ordered \emph{complexity pairs} $(n,m)$ in order to prove that there is a central term $\sigma\colon A\vdash \Pi_A$ such that $f=g\circ\sigma$. Note that since $A\vdash B$ is balanced, we have that $A$ is assorted (see Definition~\ref{assorted1}). This suffices for our proof since, when $\Pi_A=A$, then from Corollary~\ref{Corollary:CentralTerms} it follows that $\sigma\colon A\vdash A$ is equal to $\mj_A\colon A\vdash A$.

The basis of this induction is when both $f$ and $g$ are of the form $\mj_p\colon p\vdash p$, and then for $\sigma=\mj_p$ we have $f=g\circ\sigma$. For the induction step, the case when both $\mathcal{D}_f$ and $\mathcal{D}_g$ are axioms is trivial. Moreover, the case when one of $\mathcal{D}_f$ or $\mathcal{D}_g$ ends with interchange, is solved just by appealing to the induction hypothesis applied to the pair of derivations in which one is shortened by the interchange rule (the first component of the complexity pair remains the same and the second decreases).

It remains, aside from arguments by symmetry, to consider the following cases depending on the last inference rules in $\mathcal{D}_f$ and $\mathcal{D}_g$. The reason for lacking the cases with $\mathcal{D}_f$ being an axiom and $\str\vdash$ being the last rule in $\mathcal{D}_g$ is lying in Remark~\ref{balanced1} (if there is no derivation of $A\vdash A$ ending with $\str\vdash$, then there is no derivation of $\Pi_A\vdash A$ ending with this rule). Also, by our assumptions on the sequent $A\vdash B$, the case when one derivation ends with $\kon\vdash \kon$ and the other with $\vdash\str$ is impossible.

\begin{center}
\begin{tabular}{|c|c|c|}
\hline
Case & The last rule in $\mathcal{D}_f$ & The last rule in $\mathcal{D}_g$ \\
\hline
1 & axiom, main connective $\kon$ & $\kon\vdash\kon$ \\
\hline
2 & axiom, main connective $\str$ & $\vdash\str$ \\
\hline
3 & $\kon\vdash\kon$ & $\kon\vdash\kon$ \\
\hline
4 & $\kon\vdash\kon$ & $\str\vdash$ \\
\hline
5 & $\str\vdash$ & $\str\vdash$ \\
\hline
6 & $\str\vdash$ & $\vdash\str$ \\
\hline
7 & $\vdash\str$ & $\vdash\str$ \\
\hline
\end{tabular}
\end{center}

\vspace{1ex}
\noindent\textit{Case 1.} Suppose that $\mathcal{D}_g$ ends with
\[
\f{g_1\colon\Pi_{B_1}\vdash B_1\quad g_2\colon\Pi_{B_2}\vdash B_2}{g_1\kon g_2\colon\Pi_{B_1}\kon \Pi_{B_2}\vdash B_1\kon B_2.}
\]
Since $f$ is $\mj_{B_1\kon B_2}$, which is equal to $\mj_{B_1}\kon \mj_{B_2}$, we can apply the induction hypothesis to $\mj_{B_1}, g_1$ and $\mj_{B_2}, g_2$ and obtain, for some central terms $\sigma_1$ and $\sigma_2$, that $f=g\circ (\sigma_1\kon \sigma_2)$. Since $\sigma_1\kon \sigma_2$ is central, we are done.

\vspace{2ex}
\noindent\textit{Case 2.} Suppose that $\mathcal{D}_g$ ends with
\[
\f{h\colon B_1\kon (B_1\str B_2)\vdash B_2}{(B_1\str h)\circ\eta_{B_1,(B_1\str B_2)}\colon B_1\str B_2\vdash B_1\str B_2.}
\]
Since $f$ is $\mj_{B_1\str B_2}$, which is equal to $(B_1\str \varepsilon_{B_1,B_2})\circ \eta_{B_1,(B_1\str B_2)}$, we can apply the induction hypothesis to $\varepsilon_{B_1,B_2}, h$ and obtain directly $f=g$.

\vspace{2ex}
\noindent\textit{Case 3.} Suppose that $\mathcal{D}_f$ and $\mathcal{D}_g$,  respectively, end with
\[
\frac{f_1\colon A_1\vdash B_1 \;\; f_2\colon A_2\vdash B_2\otimes B_3}{f_1\otimes f_2\colon A_1\otimes A_2\vdash B} \quad \frac{g_1\colon \Pi_{A_1^1\otimes A_2^1}\vdash B_1\otimes B_2 \;\; g_2\colon \Pi_{A_1^2\otimes A_2^2}\vdash B_3}{g_1\otimes g_2\colon\Pi_{A_1^1\otimes A_2^1}\otimes\Pi_{A_1^2\otimes A_2^2}\vdash B.}
\]
Since the sequences $A_1^2$ and $A_2^2, B_3$ are disjoint, by Proposition \ref{Lemma:Main1IL} there exists a derivation of $A_1^2\vdash I$, which, by Lemma \ref{Lemma:ConstProperIL}, implies that $A_1^2$ is constant. By the assumption based on Remark \ref{R3}, we conclude that $\mathcal{D}_g$ must be of the form:
\[
\frac{g_1\colon \Pi_{A_1\otimes A_2^1}\vdash B_1\otimes B_2 \quad g_2\colon \Pi_{A_2^2}\vdash B_3}{g_1\otimes g_2\colon\Pi_{A_1\otimes A_2^1}\otimes\Pi_{A_2^2}\vdash B.}
\]
Since the sequences $A_1,B_1$ and $A_2^1,B_2$ are disjoint, by Proposition~\ref{Lemma:Main1IL} there are derivations of $A_1\vdash B_1$ and $A_2^1\vdash B_2$ coded by $g_1'$ and $g_1''$, respectively. By the induction hypothesis $f_1=g_1'$, and for some central $\sigma_1, \sigma_2$, we have that $g_1'\kon g_1''=g_1 \circ \sigma_1$ and $f_2=(g_1''\kon g_2)\circ \sigma_2$. Hence,
\begin{tabbing}
$\quad f$ \= $=f_1\kon f_2=g_1'\kon((g_1''\kon g_2)\circ \sigma_2)=(g_1'\kon g_1''\kon g_2)\circ(A_1\kon\sigma_2)$
\\[.5ex]
\> $=(g_1\kon g_2)\circ(\sigma_1\kon \Pi_{A_2^2})\circ(A_1\kon\sigma_2)=g\circ \sigma$,
\end{tabbing}
for $\sigma=(\sigma_1\kon \Pi_{A_2^2})\circ(A_1\kon\sigma_2)$, and we are done.

\vspace{2ex}
\noindent\textit{Case 4.} Assume that the last rule of $\mathcal{D}_g$ introduces $\str$ in a formula corresponding to the one that belongs to the left-premise of the last rule of $\mathcal{D}_f$. We proceed analogously in the case when this formula belongs to the right premise of this rule. By analysing all situations, one concludes that the only possible scenario is when $\mathcal{D}_f$ ends with
\[
\f{f_1\colon \Pi_{A_1^1\kon(C\str D)\kon A_1^2}\vdash B_1\quad f_2\colon A_2\vdash B_2}{f_1\kon f_2\colon \Pi_{A_1^1\kon(C\str D)\kon A_1^2}\kon A_2\vdash B_1\kon B_2,}
\]
while $\mathcal{D}_g$ ends with
\[
\f{g_1\colon A_1^1\vdash C\quad g_2\colon D\kon \Pi_{A_1^2\kon A_2}\vdash B}{g_2\circ(\varepsilon\!\kon\!\Pi_{A_1^2\kon A_2})\circ (g_1\!\kon\!(C\str D)\!\kon\! \Pi_{A_1^2\kon A_2})\colon A_1^1\kon(C\str D)\kon\Pi_{A_1^2\kon A_2}\vdash B.}
\]
In this case $A$ is $\Pi_{A_1^1\kon(C\str D)\kon A_1^2}\kon A_2$, and $B$ is $B_1\kon B_2$.

From our assumptions on $A\vdash B$, we conclude that $A_1^1,C$ and $D,A_1^2, B_1$ are disjoint, and that $A_1^2,B_1$ and $A_2,B_2$ are disjoint. By Propositions \ref{Lemma:Main1IL} and \ref{Lemma:Main2IL} there are derivations $f_1'\colon A_1^1\vdash C$, $f_1''\colon D\kon A_1^2\vdash B_1$, $g_2'\colon D\kon A_1^2\vdash B_1$ and $g_2''\colon A_2\vdash B_2$.

From the induction hypothesis we have that $f_1'=g_1$, $f_2=g_2''$, $f_1''=g_2'$ and for some central terms $\sigma_1$ and $\sigma_2$
\[
f_1\circ \sigma_1=f_1''\circ(\varepsilon\kon A_1^2)\circ (f_1'\kon (C\str D)\kon A_1^2), \quad\quad g_2\circ \sigma_2=g_2'\kon g_2''.
\]
Since the type of $\sigma_2$ is $D\kon A_1^2\kon A_2\vdash D\kon \Pi_{A_1^2\kon A_2}$ and it is balanced, which means that $D\kon A_1^2\kon A_2$ is assorted, one can take arbitrary central term $\sigma_2'\colon A_1^2\kon A_2\vdash \Pi_{A_1^2\kon A_2}$, and by Corollary~\ref{Corollary:CentralTerms} conclude that $\sigma_2=D\kon \sigma_2'$. All this entails, in a straightforward manner, that $f=g\circ \sigma$, for some central~$\sigma$.

\vspace{2ex}
\noindent\textit{Case 5.1.} First we discuss the case when the same formula is introduced at the end of both derivations. Due to our assumptions on $A\vdash B$, the only possible situation is when $\mathcal{D}_f$ ends with
\[
\f{f_1\colon A_1\vdash C\quad  f_2\colon D\kon A_2\vdash B}{f_2\circ(\varepsilon\kon A_2)\circ(f_1\kon(C\str D)\kon A_2)\colon A_1\kon (C\str D)\kon A_2\vdash B,}
\]
while $\mathcal{D}_g$ ends with
\[
\f{g_1\colon \Pi_{A_1}\vdash C\quad  g_2\colon D\kon \Pi_{A_2}\vdash B}{g_2\circ(\varepsilon\kon \Pi_{A_2})\circ(g_1\kon(C\str D)\kon \Pi_{A_2})\colon \Pi_{A_1}\kon (C\str D)\kon \Pi_{A_2}\vdash B.}
\]
By the induction hypothesis, for some central terms $\sigma_1$ and $\sigma_2$, we have that $f_1=g_1\circ\sigma_1$ and $f_2=g_2\circ\sigma_2$. As in Case~4, we conclude that $\sigma_2=D\kon \sigma_2'$, for some central term $\sigma_2'\colon A_2\vdash \Pi_{A_2}$. All this entails that for some central $\sigma$ we have $f=g\circ\sigma$.

\vspace{2ex}
\noindent\textit{Case 5.2.} Next we have the case when two different formulae are introduced at the end of $\mathcal{D}_f$ and $\mathcal{D}_g$. This case has several variations depending on places where the connectives $\str$ are introduced. However, all of these are solved in a similar way. We will consider just the case when the last rule of $\mathcal{D}_g$ introduces $\str$ in a formula corresponding to the one that belongs to the right premise of the last rule of $\mathcal{D}_f$, while the last rule of $\mathcal{D}_f$ introduces $\str$ in a formula corresponding to the one that belongs to the left-premise of the last rule of $\mathcal{D}_g$. In this case, the only possible forms of $\mathcal{D}_f$ and $\mathcal{D}_g$ are such that $\mathcal{D}_f$ ends with
\[
\f{f_1\colon A_1\vdash C_1\quad f_2\colon D_1\kon\Pi_{A_2^1\kon(C_2\str D_2)\kon A_2^2}\vdash B}{f\colon A_1\kon (C_1\str D_1)\kon \Pi_{A_2^1\kon(C_2\str D_2)\kon A_2^2}\vdash B,}
\]
for $f$ being
\[
f_2\circ(\varepsilon\kon \Pi_{A_2^1\kon(C_2\str D_2)\kon A_2^2})\circ (f_1\kon (C_1\str D_1)\kon \Pi_{A_2^1\kon(C_2\str D_2)\kon A_2^2}),
\]
while $\mathcal{D}_g$ ends with
\[
\f{g_1\colon\Pi_{A_1\kon(C_1\str D_1)\kon A_2^1}\vdash C_2\quad g_2\colon D_2\kon\Pi_{A_2^2}\vdash B}{g\colon\Pi_{A_1\kon(C_1\str D_1)\kon A_2^1}\kon (C_2\str D_2)\kon \Pi_{A_2^2}\vdash B,}
\]
for $g$ being
\[
g_2\circ(\varepsilon\kon \Pi_{A_2^2})\circ (g_1\kon (C_2\str D_2)\kon \Pi_{A_2^2}).
\]

As in Case~4, from our assumptions on $A\vdash B$, we conclude that $D_1,A_2^1,C_2$ and $D_2,A_2^2, B$ are disjoint, and that $A_1,C_1$ and $D_1,A_2^1,C_2$ are disjoint. By Proposition \ref{Lemma:Main2IL}, there are derivations $f_2'\colon D_1\kon A_2^1\vdash C_2$, $f_2''\colon D_2\kon A_2^2\vdash B$, $g_1'\colon A_1\vdash C_1$ and $g_1''\colon D_1\kon A_2^1\vdash C_2$.

The induction hypothesis says that $f_1=g_1'$, $f_2'=g_1''$, $f_2''=g_2\circ \sigma_1$ and
\[
\begin{array}{c}
f_2\circ \sigma_2= f_2''\circ (\varepsilon\kon A_2^2)\circ (f_2'\kon(C_2\str D_2)\kon A_2^2),
\\[1ex]
g_1\circ \sigma_3= g_1''\circ (\varepsilon\kon A_2^1)\circ (g_1'\kon(C_1\str D_1)\kon A_2^1).
\end{array}
\]
In the same manner as above we conclude that $\sigma_1=D_2\kon\sigma_1'$ and $\sigma_2= D_1\kon\sigma_2'$, for central terms $\sigma_1'$ and $\sigma_2'$. Then starting with $f$ and substituting all $f$'s in it by $g$'s according to the above equalities, one obtains a term that transforms with the help of equality \ref{fun2} into $g\circ\sigma$, for some central $\sigma$.

\vspace{2ex}
\noindent\textit{Case 6.} Suppose that $\mathcal{D}_f$ ends with
\[
\f{f_1\colon A_1\vdash C\quad f_2\colon D\kon A_2\vdash B}{f_2\circ (\varepsilon\kon A_2)\circ (f_1\kon (C\str D)\kon A_2)\colon A_1\kon(C\str D)\kon A_2\vdash B,}
\]
and $\mathcal{D}_g$ ends with
\[
\f{g_1\colon B_1\kon\Pi_{A_1\kon(C\str D)\kon A_2}\vdash B_2}{(B_1\str g_1)\circ\eta\colon \Pi_{A_1\kon(C\str D)\kon A_2}\vdash B_1\str B_2.}
\]

\noindent As before, from our assumptions on $A\vdash B$, one concludes that $A_1,C$ and $B_1,D,A_2,B_2$ are disjoint. By Proposition~\ref{Lemma:Main2IL} there are derivations $g_1'\colon A_1\vdash C$ and $g_1''\colon D\kon B_1\kon A_2\vdash B_2$. By the induction hypothesis, $f_1=g_1'$, $f_2=(B_1\str (g_1''\circ (\mc\kon A_2)))\circ\eta_{B_1,D\kon A_2}$, and for some central term $\sigma_1$
\[
g_1\circ\sigma_1=g_1''\circ (\varepsilon\kon B_1\kon A_2)\circ(g_1'\kon (C\str D)\kon B_1\kon A_2).
\]
(Note that for the last equality we could apply the induction hypothesis due to the fact that we count the occurrences of $\str$ in the derived sequent twice and those of $\kon$ just once---since $g_1'$ and $g_1''$ are anonymous, we do not know whether the second component of the complexity pair decreases.)

Since the type of $\sigma_1$ is $A_1\kon(C\str D)\kon B_1\kon A_2\vdash B_1\kon \Pi_{A_1\kon(C\str D)}\kon A_2$, by appealing to the fact that $A_1\kon(C\str D)\kon B_1\kon A_2$ is assorted and relying on Corollary~\ref{Corollary:CentralTerms}, one concludes that for some central $\sigma_1'$
\[
\sigma_1=(B_1\kon\sigma_1')\circ (\mc_{A_1\kon(C\str D),B_1}\kon A_2).
\]
It remains to apply the procedure mentioned at the end of Case~5.2, save that besides \ref{fun2}, this time we have to rely on equalities \ref{nat}, \ref{iso}, \ref{fun2str} and \ref{nateta}.

\vspace{2ex}
\noindent\textit{Case 7.} Suppose that $\mathcal{D}_f$ and $\mathcal{D}_g$, respectively, end with
\[
\f{f_1\colon B_1\kon A\vdash B_2}{(B_1\str f_1)\circ\eta\colon A\vdash B_1\str B_2}\quad\quad \f{g_1\colon B_1\kon \Pi_{A}\vdash B_2}{(B_1\str g_1)\circ\eta\colon \Pi_{A}\vdash B_1\str B_2.}
\]
By the induction hypothesis, there is a central term $\sigma_1\colon B_1\kon A\vdash B_1\kon\Pi_{A}$ such that $f_1=g_1\circ\sigma_1$. As before, we may conclude that $\sigma_1$ is equal to $B_1\kon \sigma$ for a central $\sigma\colon A\vdash\Pi_A$. Hence,
\begin{tabbing}
$\quad f$ \= $=(B_1\str f_1)\circ\eta=(B_1\str (g_1\circ(B_1\kon\sigma)))\circ\eta$
\\[.5ex]
\> $=(B_1\str g_1)\circ (B_1\str(B_1\kon\sigma))\circ\eta= (B_1\str g_1)\circ\eta\circ\sigma=g\circ\sigma$,
\end{tabbing}
and we are done.
\end{proof}

\section{An application of the main result}\label{application}

The formulation of Theorem~\ref{main} is very restrictive with respect to derivations, or better to say with respect to derived sequents. However, there is just one easy step to transform it into a powerful machinery for detecting equal derivations in $\mathcal{IL}$. As we will see, in order to compare two derivations of the same sequent (not necessarily balanced, but definitely proper), one has to diversify as much as possible the variables (letters) occurring in these derivations without changing the rules and then to compare the derived sequents. If it is possible to make these diversifications so that the derived sequents are identical, then the initial derivations are equal.

Our presentation of the content of this section will be more practical than formal. We will avoid tedious difficulties with diversification and unification of variables and replace it with a graphical (or diagrammatical) technique that is rather natural and easy. Such a technique will bring us closer to a categorial formulation of the main result.

Let us assign \emph{links} to primitive $\mathcal{IL}$-terms $\mj_A$, $\mc_{B,A}$, $\eta_{A,B}$ and $\varepsilon_{A,B}$. The links connect a letter in one occurrence of $A$ in the type of a primitive term with the corresponding letter in another occurrence of $A$, and the same for $B$. For example, the primitive term $\eta_{p\kon p, q}$ induces the following links.

\centerline{\includegraphics[width=.4\textwidth]{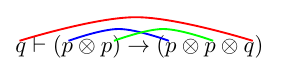} }

The links between pairs of letters in the type of an arbitrary term are formed in a similar manner. For practical reasons, we will write sequents so that the antecedent is above the consequent and then draw the links. The above example is then illustrated as:

\centerline{\includegraphics[width=.5\textwidth]{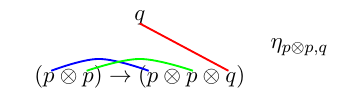} }

\noindent With this convention in mind, the links of $f\kon g$ are the links of $f$ and the links of $g$ placed side by side, while the links of $A\str f$ are the links of $\mj_A$ and the links of $f$ placed side by side. The operation $\circ$ on terms corresponds to the ``gluing'' operation on links. This will become more formal in Section~\ref{dictionary} (see the end of the part on cobordisms) when we assign to the links a pure mathematical meaning. A sufficiently illustrative example is given for the term $(p\str(\varepsilon_{p,q}\kon\mj_p))\circ\eta_{p,(p\str q)\kon p}\circ \mc_{p,p\str q}$ in Figure~\ref{links}.
\begin{figure}[!h]
    \centerline{\includegraphics[width=0.9\textwidth]{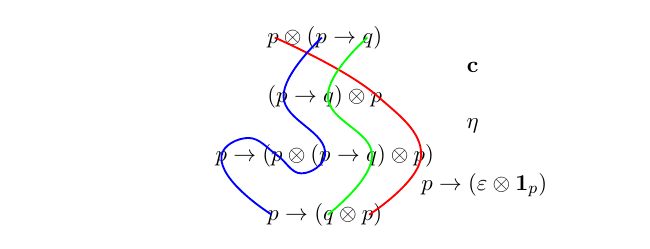} } \caption{Links induced by a term}\label{links}
\end{figure}

\begin{lem}\label{dodatak3}
Two equal terms induce the same links.
\end{lem}

\begin{proof}
By inspecting the equalities \ref{cat1}-\ref{triang2}. The mathematical content of this proof is a well known fact that the category of 1-dimensional cobordisms is symmetric monoidal closed (for these notions see Section~\ref{dictionary}).
\end{proof}

A \emph{diversification} of a term is obtained by changing its indices so that two letters in the type are equal only if they are connected by a link. For example, a diversification of $\eta_{p\kon p, q}\colon q\vdash (p\kon p)\str(p\kon p\kon q)$ is $\eta_{r\kon s, q}\colon q\vdash (r\kon s)\str(r\kon s\kon q)$, while $\eta_{p\kon p, q}$ is a \emph{unification} of $\eta_{r\kon s, q}$.

\begin{lem}
Every term has a diversification.
\end{lem}

\begin{proof}
Let $f$ be a term. We proceed by induction on the complexity of $f$. If $f$ is primitive, then change its index so that every letter occurs just once in it. If $f$ is of the form $f'\kon f''$, then we just apply the induction hypothesis to $f'$ and $f''$, taking care that the chosen diversifications are ``disjoint''. We proceed in the same manner in the case when $f$ is of the form $A\str f'$, when the induction hypothesis is applied to $\mj_A$ and $f'$.

If $f$ is of the form $f_2\circ f_1$, then we apply the induction hypothesis to $f_1$ and $f_2$ obtaining their diversifications. Then we make unifications of the obtained terms so that the links induced by $f_1$ and the links induced by $f_2$ participating in one link induced by $f$ share the same letter.
\end{proof}

In the example from Figure~\ref{links}, let us make a diversification of the composition $\eta_{p,(p\str q)\kon p}\circ \mc_{p,p\str q}$ first. Start with a diversification $\mc_{p,r\str q}$ of $\mc_{p,p\str q}$ and a diversification $\eta_{s,(r\str q)\kon p}$ of $\eta_{p,(p\str q)\kon p}$, which are chosen so that the links participating in the same link of the composition share the same letter, we make a diversification $\eta_{s,(r\str q)\kon p}\circ\mc_{p,r\str q}\colon p\kon(r\str q)\vdash s\str(s\kon (r\str q)\kon p)$ of this composition.

In order to make a diversification of the whole term, the above diversification and a diversification $u\str(\varepsilon_{v,q}\kon\mj_p)$ of $p\str(\varepsilon_{p,q}\kon\mj_p)$ should be unified so that the links participating in the links induced by the whole term share the same letter. This could be done by taking the unifications $\eta_{r,(r\str q)\kon p}\circ\mc_{p,r\str q}$ and $r\str(\varepsilon_{r,q}\kon\mj_p)$, and finally we obtain a diversification
\[
(r\str(\varepsilon_{r,q}\kon\mj_p))\circ \eta_{r,(r\str q)\kon p}\circ\mc_{p,r\str q}\colon p\kon(r\str q)\vdash r\str(q\kon p)
\]
of the whole term.

\begin{rem}\label{dodatak4}
A diversification of a term is balanced. Two terms of the same type induce the same links if and only if they have diversifications of the same type.
\end{rem}

\begin{lem}\label{dodatak5}
If diversifications of two terms of the same type are equal, then these two terms are equal.
\end{lem}

\begin{proof}
Let $f^d,g^d\colon A^d\vdash B^d$ be diversifications of $f,g\colon A\vdash B$. Note that a term and its diversification differ only in their indices and one may transform a proof of $f^d=g^d$ (just by changing indices) into a proof of $f=g$.
\end{proof}

All from above leads to the following result.
\begin{prop}\label{coherence2}
Two terms of the same proper type are equal if and only if they induce the same links in the common type.
\end{prop}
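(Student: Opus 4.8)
The plan is to prove the two implications separately, using Remark~\ref{instance} to manufacture balanced diversifications and Theorem~\ref{main} to compare them.

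For the forward (easy) direction I would show that the link-assignment descends to the equational theory, i.e.\ that provably equal terms induce the same links. To this end I would verify, one generator at a time, that each of the defining equalities \ref{cat1}--\ref{triang2} relates two terms carrying identical links; this is routine once the links of the primitive terms are fixed. Since the operations $\circ$, $\kon$ and $A\str$ act on links by gluing and by juxtaposition (as described before Figure~\ref{links}), the links of a compound term depend only on the links of its immediate constituents. Hence the link-assignment is a homomorphism out of the term algebra and is therefore constant on each congruence class, so $f=g$ forces $f$ and $g$ to induce the same links.

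For the converse, suppose $f,g\colon A\vdash B$ are proper and induce the same links. By Remark~\ref{instance} each is an instance of a balanced term, and I would take the \emph{maximal} diversification, so that $f$ is an instance of a balanced $f'$ and $g$ of a balanced $g'$. The key observation is that the type of such a maximal diversification is completely determined by the \emph{shape} of $A\vdash B$ (its formula structure with the variable names erased) together with the matching of occurrences that the term forces---and, by Lemma~\ref{Lemma:EvenNumber} and the definition of balanced, that matching is precisely the data recorded by the links. Because $f$ and $g$ share the type $A\vdash B$ and, by hypothesis, the same links, I can name the linked pairs by one common choice of fresh distinct variables and thereby arrange that $f'$ and $g'$ have literally the same type $A'\vdash B'$. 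Both are balanced; both are proper, since diversification only renames propositional letters and so preserves the constancy or non-constancy of every subformula, hence preserves propriety. Theorem~\ref{main} then yields $f'=g'$.

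It remains to transfer this equality back down. Let $\theta$ be the substitution that unifies the fresh variables of $A'\vdash B'$ into the variables of the common type $A\vdash B$. Because $f$ and $g$ arise from the \emph{same} shape under the \emph{same} links, $\theta$ simultaneously satisfies $\theta f'=f$ and $\theta g'=g$. Since the equalities \ref{cat1}--\ref{triang2} are schematic, every instance occurring in a derivation of $f'=g'$ remains a valid instance after applying $\theta$; applying $\theta$ throughout that derivation therefore produces a derivation of $f=g$, as required. I expect the main obstacle to be this coordination step: making precise that ``same shape plus same links'' licenses a single consistent choice of fresh variables witnessing that $f'$ and $g'$ have the identical type, and that one substitution $\theta$ recovers both $f$ and $g$. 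This is exactly where the informal notion of diversification from Remark~\ref{instance} must be pinned down, and it is cleanest to do so through the diagrammatic language, where the matching of occurrences is manifestly the same for $f$ and $g$.
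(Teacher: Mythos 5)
Your proposal is correct and follows essentially the same route as the paper, whose ``proof'' is the informal discussion preceding Proposition~\ref{coherence2}: the forward direction by checking that each of the equalities \ref{cat1}--\ref{triang2} preserves links (the paper's remark that equal terms must be instances of balanced terms of the same type, i.e.\ that the link-assignment underlies a functor to 1Cob), and the converse by passing via Remark~\ref{instance} to balanced diversifications $f'$, $g'$ of a common type, applying Theorem~\ref{main}, and transferring the deduction of $f'=g'$ back through the unifying substitution. Your explicit attention to the coordination step (one consistent choice of fresh variables, well-definedness of $\theta$, and preservation of propriety under diversification) fills in exactly the details the paper leaves informal.
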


\begin{proof}
If two terms are equal, then by Lemma~\ref{dodatak3} they induce the same links. For the other direction, let $A\vdash B$ be proper and let $f,g\colon A\vdash B$ induce the same links. By Remark~\ref{dodatak4}, these terms have balanced diversifications $f^d,g^d\colon A^d\vdash B^d$. Since $A\vdash B$ is proper, $A^d\vdash B^d$ is proper too. By Theorem~\ref{main}, we have that $f^d=g^d$ and by Lemma~\ref{dodatak5} the terms $f$ and $g$ are equal.
\end{proof}

This approach enables us to check equality of arbitrary $\mathcal{IL}$-terms of the same proper type. If one prefers to keep to the terms coding $\mathcal{IL}$-derivations, then the links could be drawn directly in the sequent derivations of this system. For example, let $\mathcal{D}_1$ and $\mathcal{D}_2$ be two $\mathcal{IL}$-derivations coded by terms $f_1,f_2\colon p\kon(p\str q)\vdash p\str(q\kon p)$, where
\[
f_1=(p\str((\varepsilon\kon\mj_p)\circ (p\kon\mc)\circ (\mc\kon(p\str q))))\circ\eta,
\]
while
\[
f_2=(p\str(\varepsilon\kon\mj_p))\circ\eta\circ \mc.
\]
The links induced by $f_1$ and $f_2$ are drawn directly in the derivations  $\mathcal{D}_1$ and $\mathcal{D}_2$ as follows (the left-hand side derivation is $\mathcal{D}_1$):

\centerline{\includegraphics[width=1.3\textwidth]{d1.pdf} }

We claim that these two derivations are not equivalent since the links induced by $f_1$ and $f_2$ are the following.

\centerline{\includegraphics[width=1.3\textwidth]{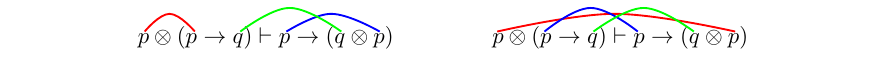} }

One can come closer to the notion of 1-dimensional cobordism that will be discussed in Section~\ref{dictionary} by replacing letters in the types by their signs ($+$ if the occurrence is positive and $-$ if it is negative). In the example above, we have the following.

\centerline{\includegraphics[width=1.3\textwidth]{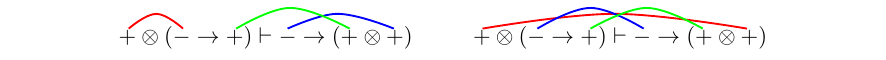} }

\begin{rem}[On generalization conjecture]\label{dodatak6}
By the generalization conjecture (see the Introduction) two derivations with the same premisses and conclusions are considered equivalent when for every maximal generalization of one of them there exists a maximal generalization of the other such that both have the same premisses and conclusions. Let $\mathcal{D}$ be an $\mathcal{IL}$-derivation coded by a term $f$. By the definition of diversification it follows that a maximal generalization of $\mathcal{D}$ is the derivation coded by a diversification of $f$. Together with Remark~\ref{dodatak4}, this entails that two derivations with the same premisses and conclusions are coded by two terms inducing the same links in the common type if and only if these two derivations have maximal generalizations with the same premisses and conclusions. By Proposition~\ref{coherence2}, if the common type is proper, these terms are equal and we conclude that if we restrict to proper formulae, the generality conjecture holds for $\mathcal{IL}$.
\end{rem}

\section{An elementary dictionary of category theory}\label{dictionary}

This section contains some elementary categorial notions having their proof-theo\-retical counterparts in the preceding text. We will not go into complete details. Except for the notion of cobordism, for which we suggest to consult \cite[Section~1.2]{K03}, the other features could be found in \cite{ML98}. The notions are listed alphabetically, and not in the order of appearance in our text. Every notion has its reference to the corresponding part covered before.

\vspace{1ex}
\noindent \textbf{Adjunctions.} Given two categories $\mathcal{C}$ and $\mathcal{D}$, an
\emph{adjunction} is given by two functors, $F\!:\mathcal{C}\str
\mathcal{D}$ and $G\!:\mathcal{D}\str \mathcal{C}$, and two
natural transformations, the \emph{unit}
$\eta\colon\mj_\mathcal{C}\strt GF$ and the \emph{counit}
$\varepsilon\colon FG\strt \mj_\mathcal{D}$, such that for every $C\in
O_\mathcal{C}$ and every $D\in O_\mathcal{D}$
\[
G\varepsilon_D \circ \eta_{GD}=\mj_{GD}, \quad \mbox{\rm and}\quad
\varepsilon_{FC}\circ F\eta_C=\mj_{FC}.
\]
The functor $F$ is a \emph{left adjoint} for the functor $G$, while
$G$ is a \emph{right adjoint} for the functor $F$.

The equalities \ref{triang1} and \ref{triang2} together with \ref{nateta} and \ref{nateps} say that $A\kon\underline{\mbox{\hspace{.7em}}}$ is a left adjoint for $A\str\underline{\mbox{\hspace{.7em}}}$.

\vspace{1ex}
\noindent \textbf{Categories.} A \emph{category} (within set theory) consists of two sets, $O$ of \emph{objects} and
$A$ of \emph{arrows}, and two functions
\[
\mbox{\rm source, target}\!:A\str O.
\]
For every object $X$ there is an arrow $\mj_X$ and for every pair $f$, $g$ of arrows such that $\mbox{\rm
source}(g)=\mbox{\rm target}(f)$ there is an arrow $g\circ f$, the \emph{composition} of $f$ and $g$.
Moreover, for
all $X,Y\in O$, and all $f,g,h\in A$ such that $h\circ g$,
$g\circ f$ and $\mj_Y\circ f$ are defined, the
following holds
\[
\mbox{\rm source}(\mj_X)=X=\mbox{\rm target}(\mj_X),
\]
\[
\mbox{\rm source}(g\circ f)=\mbox{\rm source}(f),\quad \mbox{\rm
target}(g\circ f)=\mbox{\rm target}(g),
\]
and
\[
g\circ\mj_Y=g,\quad \mj_Y\circ f=f,\quad h\circ(g\circ f)=(h\circ
g)\circ f.
\]

The equalities \ref{cat1} and \ref{cat2} say that the formulae of $\mathcal{IL}$ make the set of objects of a category whose arrows are the terms modulo equality relation. We denote this category also by $\mathcal{IL}$. For $f\colon A\vdash B$, the source of $f$ is $A$ and its target is $B$.

\vspace{1ex}
\noindent \textbf{Cobordisms.} A 1-dimensional cobordism is a triple \[(M,f_0\colon a\to M, f_1\colon b\to M),\] where $M$ is a compact oriented 1-dimensional manifold possibly with boundary (i.e.\ a finite collection of oriented circles and line segments), $a$ and $b$ are two finite collections of oriented points and $f_0$, $f_1$ are embeddings. If the boundary of $M$ is $\Sigma_0\coprod \Sigma_1$ and its orientation is induced from the orientation of $M$ (the initial point of an oriented segment is $+$ while the terminal is $-$), then $f_0\colon a\to \Sigma_0$ is orientation preserving, while $f_1\colon b\to \Sigma_1$ is orientation reversing. The source of such a cobordism is $a$ while $b$ is its target.

Two cobordisms $(M,f_0,f_1)$ and $(M',f'_0,f'_1)$ are
\emph{equivalent}, when there is an
orientation preserving homeomorphism $F:M\to M'$ such that the
following diagram commutes.

\begin{center}
\begin{tikzcd}
a\arrow[hook]{r}{f_0}\arrow[hook]{dr}[swap]{f'_0}
&M \arrow{d}{F} \arrow[hookleftarrow]{r}{f_1} & b \arrow[hook]{ld}{f'_1}\\
&M'
\end{tikzcd}
\end{center}

The category 1Cob has closed oriented 0-manifolds (i.e.\ finite collections of oriented points) as objects and equivalence classes of cobordisms as arrows. The composition of cobordisms is defined by ``gluing'', or more formally by making pushouts of pairs of embeddings. (For the notion of pushout see \cite[Section II.3]{ML98}.)

A formal interpretation of our diagrammatical approach in Section~\ref{application} is that we have defined there a correspondence between $\mathcal{IL}$-derivations (or just terms) and 1-dimensional cobordisms. Our links between pairs of variables in derived sequents are nothing but 1-dimensional cobordisms and the operation $\circ$ on terms corresponds to the composition of cobordisms. The case of the operation $\circ$ in the definition of links assigned to a term (see Section~\ref{application}) relies on the fact that the composition of two cobordisms is again a cobordism.

\vspace{1ex}
\noindent \textbf{Coherence.} Mac Lane, \cite[Section~3]{ML63} introduced the word ``coherent'' for the following property of the natural transformation $\alpha$ defined as in a symmetric monoidal category: $\alpha$ is \emph{coherent} when for each pair of functors obtained by iterating $\kon$, there is at most one iterate of $\alpha$ which is a natural isomorphism between them. The term \emph{coherence} stems from this definition. In its most primitive form, like in the case of $\alpha$, coherence is formulated as ``all diagrams commute''.

Many different results from many different fields of mathematics may be treated as coherence. We mention just a few: \cite[first statement of Proposition~3]{S63}, \cite[Theorem~2.4]{KML71}, \cite[XI.3, Theorem~1]{ML98}, \cite[Proposition~4]{L68}, \cite[Theorems~1-2]{S90} \cite[Theorem~3.6]{BFSV},  \cite[Theorem~2.5]{K93}, \cite[Theorem]{S97}, \cite[Theorem~2.5]{T10} and ~\cite{S09}. Our Theorem~\ref{main} has the same mathematical content as \cite[Theorem~2.4]{KML71}.

\vspace{1ex}
\noindent \textbf{Functors.} Given two categories $\mathcal{C}$ and $\mathcal{D}$, a
\emph{functor} $F\colon \mathcal{C}\str\mathcal{D}$ consists of two
functions, both denoted by $F$, the \emph{object function}
$F\!:O_\mathcal{C}\str O_\mathcal{D}$ and the \emph{arrow
function} $F\!:A_\mathcal{C}\str A_\mathcal{D}$, such that for
every $C\in O_\mathcal{C}$ and every composition $g\circ f$ of
arrows of $\mathcal{C}$, the following holds
\[
F\mj_C=\mj_{FC},\quad F(g\circ f)=Fg\circ Ff.
\]
A functor $F\colon \mathcal{C}\str\mathcal{D}$ is \emph{faithful} when for all $f,g\colon A\vdash B$ in $\mathcal{C}$ if $Ff=Fg$, then $f=g$.

The equalities \ref{fun1} and \ref{fun2} say that $\kon$ is a functor from $\mathcal{IL}\times \mathcal{IL}$ to $\mathcal{IL}$, where the structure of $\mathcal{IL}\times \mathcal{IL}$ is defined componentwise. The equalities \ref{fun1str} and \ref{fun2str} say that $A\str\underline{\mbox{\hspace{.7em}}}$ is a functor from $\mathcal{IL}$ to $\mathcal{IL}$. Most of the cases in our cut-elimination procedure (see the proof of Theorem~\ref{cutelim1}) use the functoriality of the connectives. Proposition~\ref{coherence2} says that the correspondence between terms and links underlies a faithful functor from $\mathcal{IL}$ (restricted to proper objects) to the category 1Cob of 1-dimensional cobordisms.

\vspace{1ex}
\noindent \textbf{Isomorphisms.} An arrow $f\colon A\vdash B$ of a category $\mathcal{C}$ is an
\emph{isomorphism} when there is an arrow $g\colon B\vdash A$ in
$\mathcal{C}$, such that $g\circ f=\mj_A$ and $f\circ g=\mj_B$ in
$\mathcal{C}$. We say that $g$ is the \emph{inverse} of $f$.

The equality \ref{iso} says that $\mc_{A,B}$ is an isomorphism with $\mc_{B,A}$ as its inverse. Definition~\ref{invertible} says that invertible terms represent isomorphisms in $\mathcal{IL}$. Remarks~\ref{ItoI} and \ref{R3}, Lemma~\ref{AtoI} and Proposition~\ref{R1} are also about isomorphisms.

\vspace{1ex}
\noindent \textbf{Natural transformations.}
Given two functors $F,G\!:\mathcal{C}\str\mathcal{D}$, a
\emph{natural transformation} $\alpha\!:F\strt G$ is a function
from $O_\mathcal{C}$ to $A_\mathcal{D}$, i.e.,\ a family of arrows
of $\mathcal{D}$ indexed by the objects of $\mathcal{C}$, such
that for every $C\in O_\mathcal{C}$, $\mbox{\rm
source}(\alpha_C)=FC$ and $\mbox{\rm target}(\alpha_C)=GC$, and
for every $f\!:C\str C'\in A_\mathcal{C}$, the following diagram commutes in $\mathcal{D}$.
\[
\begin{CD}
FC @>{\alpha_C}>> GC \\
@V{Ff}VV  @VV{Gf}V \\
FC' @>>{\alpha_{C'}}> GC' \\
\end{CD}
\]
If all the elements of the family are isomorphisms, then this is a \emph{natural isomorphism}.

The equalities \ref{nat} and \ref{iso} say that $\mc$ is a natural isomorphism from $\kon$ to $\kon$ (precomposed by a transposition of arguments). The equalities \ref{nateta} and \ref{nateps} say that $\eta$ is a natural transformation from the identity functor on $\mathcal{IL}$ (the identity on arrows and objects) to the functor $A\str(A\kon\; )$, and $\varepsilon$ is a natural transformation from the functor $A\kon(A\str\; )$ to the identity functor on $\mathcal{IL}$. In some cases of our cut-elimination procedure (see the proof of Theorem~\ref{cutelim1}) we use the naturality of $\mc$, $\eta$ and $\varepsilon$.

\vspace{1ex}
\noindent \textbf{Symmetric monoidal categories.} A category $\mathcal{C}$ is \emph{symmetric monoidal} when it is equipped with a functor $\kon\colon\mathcal{C}\times
\mathcal{C}\str \mathcal{C}$ and an object $I$
such that the following holds. There are three natural isomorphisms with components
\[
\alpha_{A,B,C}\!:A\otimes(B\otimes C)\str (A\otimes B)\otimes C,
\lambda_A\!:I\otimes A\str A, \mc_{A,B}\!:A\otimes B\str
B\otimes A.
\]
The natural isomorphism $\mc$ is \emph{self-inverse},
i.e.,\ $\mc_{B,A}\circ\mc_{A,B}=\mj_{A\otimes B}$. Moreover, the following diagrams (\emph{coherence conditions}) commute.

\begin{equation}
\label{pentagon}
\minCDarrowwidth5pt
\begin{tikzcd}[column sep=small]
A\otimes(B\otimes(C\otimes D)) \arrow{r}{\alpha}
\arrow{d}[swap]{\textbf{1}\otimes\alpha} &(A\otimes
B)\otimes(C\otimes D) \arrow{r}{\alpha}
&((A\otimes B)\otimes C)\otimes D \\
A\otimes((B\otimes C)\otimes D) \arrow{r}[swap]{\alpha}
&(A\otimes(B\otimes C))\otimes D \arrow{ur}[swap]{\alpha\otimes
\textbf{1}} &
\end{tikzcd}
\end{equation}

\[
\begin{tikzcd}[column sep=small]
(I\otimes A)\otimes B \arrow{dr}{\lambda\otimes \textbf{1}}
\\
I\otimes (A\otimes B) \arrow{u}{\alpha} \arrow{r}[swap]{\lambda} &
A\otimes B
\end{tikzcd}
\hspace{.5em}
\begin{tikzcd}[column sep=small]
A\otimes(B\otimes C) \arrow{r}{\alpha}
\arrow{d}[swap]{\textbf{1}\otimes\mc} &(A\otimes B)\otimes C
\arrow{r}{\mc}
&C\otimes(A\otimes B) \arrow{d}{\alpha}\\
A\otimes(C\otimes B) \arrow{r}[swap]{\alpha} &(A\otimes C)\otimes
B \arrow{r}[swap]{\mc\otimes \textbf{1}} &(C\otimes A)\otimes B
\end{tikzcd}
\]

A symmetric monoidal category is \emph{strict
monoidal} when all the arrows in $\alpha$ and $\lambda$
are identities.

The equalities \ref{cat1}-\ref{coh} say that $\mathcal{IL}$ is a symmetric strict monoidal category. The loss of generality made by passing to strict monoidal structure is negligible by \cite[Section XI.3, Theorem~1]{ML98}. However the non-strict monoidal structure given by families $\alpha$ and $\lambda$ is combinatorially interesting and deserves a particular attention. We have skipped it on this occasion in order to make our proofs in Sections \ref{IL} and~\ref{main section} less complicated.

\vspace{1ex}
\noindent \textbf{Symmetric monoidal closed categories.} A symmetric monoidal category $\mathcal{C}$ is (monoidal) \emph{closed} when for every object $A$ it is equipped with a functor
$A\str\underline{\mbox{\hspace{.7em}}}\colon\mathcal{C}\to \mathcal{C}$, which is a right adjoint for the functor $A\kon\underline{\mbox{\hspace{.7em}}}\colon\mathcal{C}\to \mathcal{C}$.

The category of sets and functions is symmetric monoidal closed. In this case the Cartesian product serves as $\kon$, while $X\str Y$ is interpreted as the set of functions from $X$ to $Y$. For any field $K$, the category of vector spaces over $K$ is symmetric monoidal closed. The usual tensor product serves as $\kon$, and $V\str W$ is interpreted as the vector space of linear transformations from $V$ to $W$. The category 1Cob of cobordisms is also symmetric monoidal closed. For its objects $X$ and $Y$, the object $X\kon Y$ is just the disjoint union $X\sqcup Y$, while $X\str Y$ is again the disjoint union $X^\ast\sqcup Y$, where $\ast$ replaces + by - and vice versa. There are much more examples of such categories.

The equalities \ref{cat1}-\ref{triang2} say that $\mathcal{IL}$ is a symmetric (strict) monoidal closed category. It is freely generated by the set of propositional letters. The universal property of $\mathcal{IL}$ is the following: for every function $f$ from the set of propositional letters to the set of objects of an arbitrary symmetric strict monoidal closed category $\mathcal{C}$, there is a unique functor from $\mathcal{IL}$ to $\mathcal{C}$ that extends $f$ and preserves the symmetric monoidal closed structure. This property enables us to find models of $\mathcal{IL}$ in the rest of mathematics. These models rarely serve for checking derivability relation---they serve to check the equality of derivations in this logic. A deeper analysis of the full coherence conditions for symmetric monoidal closed categories is present in \cite{V77}, \cite{S90}, \cite{S97} and \cite{MS07}.

\appendix

\section{}\label{sec:appendix}

We have to introduce some notions before we proceed to the proof of Theorem~\ref{cutelim1}. By an $\alpha$-formula in an $\mathcal{IL}$-derivation we always mean its particular occurrence in this derivation. The sequent $C\vdash A$ is the \emph{left-premise} and $G\kon A\kon E\vdash D$ is the \emph{right premise} of the cut inference figure.

For every inference figure of $\mathcal{IL}$, except $\vdash\str$ and $\kon\vdash\kon$, the consequent $D$ of the lower sequent has the unique \emph{successor}---an occurrence of the same $\alpha$-formula $D$ as the consequent of an upper sequent. When the rule $\kon\vdash\kon$ is in question, then if one of $C$ or $D$ is $I$ and the other is not, the consequent of the lower sequent has the unique successor in the same sense as above. If both $C$ and $D$ are $I$, then the consequent of the lower sequent has two successors, the consequents of the upper sequents of this rule.

On the other hand, every prime factor of the antecedent of the lower sequent of an inference figure, except $A\str B$ in $\str\vdash$, has the unique \emph{successor}, an occurrence of the same $\alpha$-formula in the antecedent of an upper sequent. Let the \emph{rank} of the consequent $D$ of a sequent in a derivation be the number of $\alpha$-formulae of that derivation that are related to $D$ by the reflexive and transitive closure of the successor relation, and let the \emph{rank} of a prime factor of the antecedent of a sequent in a derivation be defined in the same manner. Every non-prime factor of the antecedent of a sequent has \emph{rank} 1.

Let the $\alpha$-formula $A$ in the cut inference figure be called \emph{cut formula}. Let the \emph{degree of a cut} in a derivation be the number of occurrences of $\str$ and $\kon$ in the cut formula~$A$. Let the \emph{rank of a cut} in a derivation be the sum of the rank of the cut formula in the left-premise and the rank of the cut formula in the right premise of this cut inference figure, save that if the cut formula is $I$, the rank of the cut formula in the right premise (which is usually invisible) is always~1.

\begin{proof}[Proof of Theorem~\ref{cutelim1}]
It suffices to prove the case when the derivation coded by $f$ has cut
as the last inference figure and there is no other application of cut in this derivation. As usually with cut-elimination procedures, we proceed by induction on lexicographically ordered
pairs $(d,r)$, where $d$ is the degree and $r$ is the rank of the
cut in such a derivation.

\vspace{1ex}
\noindent (0) For the basis, when $(d,r)=(0,2)$, the derivation is of the form
\[
\f{\mj_p\colon p\vdash p\quad \mj_p\colon p\vdash p}{\mj_p\circ\mj_p\colon p\vdash p} \quad\mbox{\rm or}\quad \f{\afrac{\mj_I\colon I\vdash I}\quad \fp{\mathcal{D}}{g\colon G\otimes E\vdash D}}{g\circ(G\otimes\mj_I\otimes E)\colon G\otimes E\vdash D}
\]
and we transform the first into the derivation consisting only of the
axiomatic sequent $\mj_p\colon p\vdash p$ (by \ref{cat1} we have that $\mj_p\circ\mj_p=\mj_p$). The second derivation is transformed into
\[
\fp{\mathcal{D}}{g\colon G\otimes E\vdash D,}
\]
and by \ref{fun1} and \ref{cat1}, we have that $g\circ(G\otimes\mj_I\otimes E)=g$.

\vspace{2ex}
\noindent (1) When $d>0$ and $r=2$ we have the following cases.

\vspace{1ex}
(1\emph{a}) Derivations of the
following forms
\[
\f{\afrac{\mj_A\colon A\vdash A}\quad \fp{\mathcal{D}}{f\colon G \kon A\kon E\vdash
D}}{f\circ (G\kon\mj_A\kon E)\colon G\kon A\kon E\vdash D}\quad\quad\quad
\f{\fp{\mathcal{D}}{g\colon C\vdash A}\quad \afrac{\mj_A\colon A\vdash
A}}{\mj_A\circ g\colon C\vdash A}
\]
are transformed, respectively, into the following cut-free
derivations
\[
\fp{\mathcal{D}}{f\colon G \kon A\kon E\vdash
D}\quad\quad\quad
\fp{\mathcal{D}}{g\colon C\vdash A.}
\]
The last two terms are by \ref{fun1} and \ref{cat1} equal, respectively, to the two terms above.

\vspace{1ex}
(1\emph{b}) If the derivation is of the form
\[
\f{\f{\fp{\mathcal{D}_1}{f\colon A_1\kon C\vdash A_2}}{C\vdash
A_1\str A_2}\quad\quad \f{\fp{\mathcal{D}_2}{g\colon G\vdash
A_1}\quad \fp{\mathcal{D}_3}{h\colon A_2\kon E\vdash D}}{G\kon (A_1\str
A_2)\kon E\vdash D}}{G\kon C\kon E\vdash D}
\]
and it is coded by the term
\[
u=h\circ(\varepsilon\kon E)\circ(g\kon(A_1\str A_2)\kon E)\circ (G\kon ((A_1\str f)\circ\eta)\kon E),
\]
then this derivation is transformed into the following derivation
\begin{equation}\label{3}
\f{\fp{\mathcal{D}_2}{g\colon G\vdash
A_1}\rzb\f{\fp{\mathcal{D}_1}{f\colon A_1\kon C\vdash A_2}\rza
\fp{\mathcal{D}_3}{h\colon A_2\kon E\vdash D}}{h\circ (f\kon E)\colon A_1\kon C\kon E\vdash
D}}{h\circ(f\kon E)\circ (g\kon C\kon E)\colon G\kon C\kon E \vdash D.}
\end{equation}
We have the following calculation:
\begin{tabbing}
$u$ \=$=h\circ((\varepsilon\circ(g\kon(A_1\str A_2))\circ(G\kon(A_1\str f))\circ(G\kon \eta))\kon E)$, \= by \ref{fun2}
\\[.5ex]
\> $=h\circ((\varepsilon\circ(A_1\kon(A_1\str f))\circ(A_1\kon \eta)\circ(g\kon C))\kon E)$, \> by \ref{fun2}, \ref{nateta}
\\[.5ex]
\> $=h\circ((f\circ\varepsilon\circ(A_1\kon \eta)\circ(g\kon C))\kon E)$, \> by \ref{nateps}
\\[.5ex]
\> $=h\circ((f\circ(g\kon C))\kon E)$, \> by \ref{triang1}
\\[.5ex]
\> $=h\circ(f\kon E)\circ(g\kon C\kon E)$. \> by \ref{fun2}
\end{tabbing}

The upper cut in the derivation (\ref{3}) is of lower degree than the original cut. Hence, by the induction hypothesis, one can find a cut-free derivation of $A_1\kon C\kon E\vdash D$ coded by a term $v$ equal to $h\circ(f\kon E)$. So, we have the following derivation, where $\mathcal{D}_4$ is cut-free
\begin{equation}\label{3.1}
\f{\fp{\mathcal{D}_2}{g\colon G\vdash
A_1}\rzb \fp{\mathcal{D}_4}{v\colon A_1\kon C\kon E\vdash
D}}{v\circ (g\kon C\kon E)\colon G\kon C\kon E \vdash D.}
\end{equation}
Since the cut in (\ref{3.1}) is of lower degree than the original, one may apply the induction hypothesis again, in order to obtain a cut-free derivation of $G\kon C\kon E\vdash D$ coded by a term equal to $v\circ (g\kon C\kon E)$ and hence to $u$.

\vspace{1ex}
(1\emph{c}) Finally, if the derivation is of the form
\[
\f{\f{\fp{\mathcal{D}_1}{f\colon C_1\vdash A_1}\quad \fp{\mathcal{D}_2}{g\colon C_2\vdash A_2}}{C_1\kon C_2\vdash
A_1\kon A_2}\quad\quad \fp{\mathcal{D}_3}{h\colon G\kon A_1\kon A_2\kon E\vdash
D}}{G\kon C_1\kon C_2\kon E\vdash D}
\]
and it is coded by the term $u=h\circ(G\kon f\kon g\kon E)$, then since $r=2$, by our definition of rank, neither $A_1$ nor $A_2$ is $I$. This derivation is transformed into the following derivation
\[
\f{\fp{\mathcal{D}_2}{g\colon C_2\vdash A_2}\rzb\f{\fp{\mathcal{D}_1}{f\colon C_1\vdash A_1}\rza
\fp{\mathcal{D}_3}{h\colon G\kon A_1\kon A_2\kon E\vdash D}}{h\circ(G\kon f\kon A_2\kon E)\colon G\kon C_1\kon A_2\kon E\vdash
D}}{h\circ(G\kon f\kon A_2\kon E)\circ(G\kon C_1\kon g\kon E)\colon G\kon C_1\kon C_2\kon E \vdash D}
\]

By \ref{fun2}, we have that $u=h\circ(G\kon f\kon A_2\kon E)\circ(G\kon C_1\kon g\kon E)$. We proceed with two remaining cuts as in (1\emph{b}).

\vspace{2ex}
\noindent (2) When $r>2$, either the derivation of the left-premise of the cut ends with one of the following inference figures
\begin{equation}\label{4}
\f{C_1\kon C_2\kon C_3\kon C_4\vdash A}{C_1\kon C_3\kon C_2\kon C_4\vdash
A}\pravilo{\rm int.}\rzb\rzb \f{C_1\vdash C_2\rzb C_3\kon C_4\vdash A}{C_1\kon (C_2\str
C_3)\kon C_4\vdash A}\pravilo{$\str\vdash$}
\end{equation}

\begin{equation}\label{4a}
\f{C_1\vdash A\quad C_2\vdash I}{C_1\kon C_2\vdash
A}\pravilo{$\otimes\vdash\otimes$}\rzb\rzb\rzb \f{C_1\vdash I\quad C_2\vdash I}{C_1\kon C_2\vdash I}\pravilo{$\otimes\vdash\otimes$,}
\end{equation}
or, for $A$ being a prime factor in the antecedent of the right premise of the cut, the derivation of this premise ends with one of
the following inference figures (in the right-hand sides of (\ref{5}) and (\ref{6}) below, $A$ is a prime factor of $G'$ or~$E'$).
\begin{equation}\label{5}
\f{G'\kon A\kon E'\vdash D}{G\kon A\kon E\vdash
D}\pravilo{\rm interchange}
\mbox{\hspace{8em}}
\f{G'\vdash X \quad Y\kon E'\vdash D}{G\kon A\kon E\vdash
D}\pravilo{$\str\vdash$}
\end{equation}

\begin{equation}\label{6}
\mbox{\hspace{-1.3em}}\f{D_1\kon G\kon A\kon E\vdash D_2}{G\kon A\kon E\vdash D_1\str
D_2}\pravilo{$\vdash\str$} \mbox{\hspace{4.3em}} \f{G'\vdash D_1\quad E'\vdash D_2}{G\kon A\kon E\vdash D_1\kon D_2}\pravilo{$\kon\vdash\kon$}
\end{equation}

(2\emph{a}) If the derivation is of the form
\[
\f{\f{\fp{\mathcal{D}_1}{f\colon C_1\kon C_2\kon C_3\kon C_4\vdash A}}{f\circ(C_1\kon\mc\kon C_4)\colon C_1\kon C_3\kon C_2\kon C_4\vdash A}\quad\quad \fp{\mathcal{D}_3}{g\colon G\kon A\kon E\vdash
D}}{g\circ(G\kon(f\circ(C_1\kon\mc\kon C_4))\kon E)\colon G\kon C_1\kon C_3\kon C_2\kon C_4\kon E\vdash D,}
\]
then this derivation is transformed into the following derivation:
\[
\f{\f{\fp{\mathcal{D}_1}{f\colon C_1\kon C_2\kon C_3\kon C_4\vdash A}\quad\quad \fp{\mathcal{D}_3}{g\colon G\kon A\kon E\vdash
D}}{g\circ(G\kon f\kon E)\colon G\kon C_1\kon C_2\kon C_3\kon C_4\kon E\vdash D}}{g\circ(G\kon f\kon E)\circ(G\kon C_1\kon\mc\kon C_4\kon E)\colon G\kon C_1\kon C_3\kon C_2\kon C_4\kon E\vdash D.}
\]

By \ref{fun2}, the terms coding these two derivations are equal. Since the rank of the new cut is lower than the rank of the original, and the degrees are the same, one may apply the induction hypothesis and find a cut-free derivation of $G\kon C_1\kon C_2\kon C_3\kon C_4\kon E\vdash D$ coded by a term equal to $g\circ(G\kon f\kon E)$.

\vspace{1ex}
(2\emph{b}) If the derivation is of the form
\[
\f{\f{\fp{\mathcal{D}_1}{f\colon C_1\vdash C_2}\quad \fp{\mathcal{D}_2}{g\colon C_3\kon C_4\vdash A}}{C_1\kon (C_2\str C_3)\kon C_4\vdash A}\quad\quad \fp{\mathcal{D}_3}{h\colon G\kon A\kon E\vdash
D}}{G\kon C_1\kon (C_2\str C_3)\kon C_4\kon E\vdash D}
\]
and it is coded by the term
\[
u=h\circ(G\kon(g\circ(\varepsilon\kon C_4)\circ(f\kon(C_2\str
C_3)\kon C_4))\kon E),
\]
then this derivation is transformed into the following derivation
\[
\f{\fp{\mathcal{D}_1}{f\colon C_1\vdash C_2}\quad\quad \f{\f{\fp{\mathcal{D}_2}{g\colon C_3\kon C_4\vdash A}\rzb \fp{\mathcal{D}_3}{h\colon G\kon A\kon E\vdash
D}}{h\circ(G\kon g\kon E)\colon G\kon C_3\kon C_4\kon E\vdash D}}{h\circ(G\kon g\kon E)\circ(\mc\kon C_4\kon E)\colon C_3\kon G\kon C_4\kon E\vdash D}}{\f{v\colon C_1\kon (C_2\str C_3)\kon G\kon C_4\kon E\vdash D}{v\circ(\mc\kon C_4\kon E)\colon G\kon C_1\kon (C_2\str C_3)\kon C_4\kon E\vdash D,}}
\]
for $v$ being the term
\[
h\circ(G\kon g\kon E)\circ(\mc\kon C_4\kon E)\circ(\varepsilon\kon G\kon C_4\kon  E)\circ(f\kon(C_2\str
C_3)\kon G\kon C_4\kon E).
\]

By \ref{nat} and \ref{iso}, the term $v\circ(\mc\kon C_4\kon E)$ is equal to
\[
h\circ(G\kon g\kon E)\circ(G\kon \varepsilon\kon C_4\kon  E)\circ(G\kon f\kon(C_2\str
C_3)\kon C_4\kon E),
\]
and by \ref{fun2}, this term is equal to $u$. Since the rank of the new cut is lower than the rank of the original, and the degrees are the same, we proceed as in (2\emph{a}).

\vspace{1ex}
(2\emph{c}) If the derivation is of the form
\[
\f{\f{\fp{\mathcal{D}_1}{f\colon C_1\vdash A}\quad \fp{\mathcal{D}_2}{g\colon C_2\vdash I}}{f\kon g\colon C_1\kon C_2\vdash A}\quad\quad \fp{\mathcal{D}_3}{h\colon G\kon A\kon E\vdash
D}}{h\circ(G\kon f\kon g \kon E)\colon G\kon C_1\kon C_2\kon E\vdash D},
\]
where $A$ is not $I$, then it is transformed into the following derivation
\[
\f{\fp{\mathcal{D}_2}{g\colon C_2\vdash I}\quad\quad\f{\fp{\mathcal{D}_1}{f\colon C_1\vdash A}\quad \fp{\mathcal{D}_3}{h\colon G\kon A\kon E\vdash
D}}{h\circ(G\kon f\kon E)\colon G\kon C_1\kon E\vdash
D}}{h\circ(G\kon f\kon E)\circ(G\kon C_1\kon g\kon E)\colon G\kon C_1\kon C_2\kon E\vdash D}.
\]

By \ref{fun2}, the terms coding these two derivations are equal. The rank of the upper cut is lower than the rank of the original cut, while the degree remains the same. The degree of the lower cut is less than the degree of the original cut and we may proceed as in (2\emph{a}).

\vspace{1ex}
(2\emph{d}) If the derivation is of the form
\[
\f{\f{\fp{\mathcal{D}_1}{f\colon C_1\vdash I}\quad \fp{\mathcal{D}_2}{g\colon C_2\vdash I}}{f\kon g\colon C_1\kon C_2\vdash I}\quad\quad \fp{\mathcal{D}_3}{h\colon G\kon E\vdash
D}}{h\circ(G\kon f\kon g \kon E)\colon G\kon C_1\kon C_2\kon E\vdash D},
\]
then it is transformed into the following derivation
\[
\f{\fp{\mathcal{D}_2}{g\colon C_2\vdash I}\quad\quad\f{\fp{\mathcal{D}_1}{f\colon C_1\vdash I}\quad \fp{\mathcal{D}_3}{h\colon G\kon E\vdash
D}}{h\circ(G\kon f\kon E)\colon G\kon C_1\kon E\vdash
D}}{h\circ(G\kon f\kon E)\circ(G\kon C_1\kon g\kon E)\colon G\kon C_1\kon C_2\kon E\vdash D},
\]
and again by \ref{fun2}, the terms coding these two derivations are equal. Now both cuts have the ranks lower than the rank of the original cut and we may proceed as in (2\emph{a}).

\vspace{1ex}
(2\emph{e}) The situation presented at the left-hand side of (\ref{5}) has two essentially different cases---one with $A$ as a parameter and another with $A$ involved in the interchange. Let us first consider the case when the derivation is of the form
\[
\f{\fp{\mathcal{D}_1}{f\colon C\vdash
A}\rzb\f{\fp{\mathcal{D}_2}{g\colon G\kon A\kon E_1\kon E_2\kon E_3\kon E_4\vdash D}}{G\kon A\kon E_1\kon E_3\kon E_2\kon E_4\vdash D}}{G\kon C\kon E_1\kon E_3\kon E_2\kon E_4\vdash D,}
\]
and it is coded by the term
\[
u=g\circ(G\kon A\kon E_1\kon\mc\kon E_4)\circ(G\kon f\kon E_1\kon E_3\kon E_2\kon E_4).
\]
This derivation is transformed into the derivation
\[
\f{\f{\fp{\mathcal{D}_1}{f\colon C\vdash
A}\rzb\fp{\mathcal{D}_2}{g\colon G\kon A\kon E_1\kon E_2\kon E_3\kon E_4\vdash D}}{g\circ(G\kon f\kon E_1\kon E_2\kon E_3\kon E_4)\colon G\kon C\kon E_1\kon E_2\kon E_3\kon E_4\vdash D}}{G\kon C\kon E_1\kon E_3\kon E_2\kon E_4\vdash D,}
\]
which is coded by the term
\[
u'=g\circ(G\kon f\kon E_1\kon E_2\kon E_3\kon E_4)\circ (G\kon C\kon E_1\kon \mc\kon E_4).
\]
By \ref{fun2}, the terms $u$ and $u'$ are equal and we proceed as in (2\emph{a}).

\vspace{1ex}
Next, we consider the case when the derivation is of the form
\[
\f{\fp{\mathcal{D}_1}{f\colon C\vdash
A}\rzb\f{\fp{\mathcal{D}_2}{g\colon G_1\kon G_2\kon A\kon G_3\kon E\vdash D}}{g\circ(G_1\kon\mc\kon E)\colon G_1\kon G_3\kon G_2\kon A\kon E\vdash D}}{g\circ(G_1\kon\mc\kon E)\circ(G_1\kon G_3\kon G_2\kon f\kon E)\colon G_1\kon G_3\kon G_2\kon C\kon E\vdash D,}
\]
and it is transformed into
\[
\f{\f{\fp{\mathcal{D}_1}{f\colon C\vdash
A}\rzb\fp{\mathcal{D}_2}{g\colon G_1\kon G_2\kon A\kon G_3\kon E\vdash D}}{g\circ(G_1\kon G_2\kon f\kon G_3\kon E)\colon G_1\kon G_3\kon G_2\kon A\kon E\vdash D}}{g\circ(G_1\kon G_2\kon f\kon G_3\kon E)\circ(G_1\kon\mc\kon E)\colon G_1\kon G_3\kon G_2\kon C\kon E\vdash D.}
\]
By \ref{nat}, the terms coding these two derivations are equal and we proceed as in (2\emph{a}).

\vspace{1ex}
(2\emph{f}) The situation presented at the right-hand side of (\ref{5}) is captured by a derivation of the following form
\[
\f{\fp{\mathcal{D}_1}{f\colon C\vdash
A}\rzb\f{\fp{\mathcal{D}_2}{g\colon G\kon A\vdash X}\rzb \fp{\mathcal{D}_3}{h\colon Y\kon E'\vdash D}}{G\kon A\kon(X\str Y)\kon E'\vdash D}}{G\kon C\kon(X\str Y)\kon E'\vdash D,}
\]
which is coded by the term
\[
h\circ(\varepsilon\kon E')\circ(g\kon(X\str
Y)\kon E')\circ(G\kon f\kon(X\str Y)\kon E').
\]
This derivation is transformed into the derivation
\[
\f{\f{\fp{\mathcal{D}_1}{f\colon C\vdash
A}\rzb \fp{\mathcal{D}_2}{g\colon G\kon A\vdash X}}{g\circ(G\kon f)\colon G\kon C\vdash X}\rzb \fp{\mathcal{D}_3}{h\colon Y\kon E'\vdash D}}{h\!\circ\!(\varepsilon\kon E')\!\circ\!((g\!\circ\!(G\kon f))\kon(X\str Y)\kon E')\colon G\kon C\kon(X\str Y)\kon E'\vdash D,}
\]
and the terms coding these two derivations are equal by \ref{fun2}. Then we proceed as in (2\emph{a}).

\vspace{1ex}
(2\emph{g}) The situation presented at the left-hand side of (\ref{6}) is captured by a derivation of the following form
\[
\f{\fp{\mathcal{D}_1}{f\colon C\vdash
A}\rzb\f{\fp{\mathcal{D}_2}{g\colon D_1\kon G\kon A\kon E\vdash D_2}}{(D_1\str g)\circ \eta\colon G\kon A\kon E\vdash D_1\str D_2}}{(D_1\str g)\circ \eta\circ (G\kon f\kon E)\colon G\kon C\kon E\vdash D_1\str D_2.}
\]
This derivation is transformed into the derivation
\[
\f{\f{\fp{\mathcal{D}_1}{f\colon C\vdash
A}\rzb\fp{\mathcal{D}_2}{g\colon D_1\kon G\kon A\kon E\vdash D_2}}{g\circ (D_1\kon G\kon f\kon E)\colon D_1\kon G\kon C\kon E\vdash D_2}}{(D_1\str (g\circ (D_1\kon G\kon f\kon E)))\circ \eta\colon G\kon C\kon E\vdash D_1\str D_2,}
\]
and the terms coding these two derivations are equal by \ref{nateta} and \ref{fun2str}. Then we proceed as in (2\emph{a}).

\vspace{1ex}
(2\emph{h}) Finally, the situation presented at the right-hand side of (\ref{6}) is captured by a derivation of the following form
\[
\f{\fp{\mathcal{D}_1}{f\colon C\vdash
A}\rzb\f{\fp{\mathcal{D}_2}{g\colon G\kon A\kon E_1\vdash D_1}\rzb \fp{\mathcal{D}_3}{h\colon E_2\vdash D_2}}{g\kon h\colon G\kon A\kon E_1\kon E_2\vdash D_1\kon D_2}}{(g\kon h)\circ (G\kon f\kon E_1\kon E_2)\colon G\kon C\kon E_1\kon E_2\vdash D_1\kon D_2.}
\]
This derivation is transformed into the derivation
\[
\f{\f{\fp{\mathcal{D}_1}{f\colon C\vdash
A}\rzb\fp{\mathcal{D}_2}{g\colon G\kon A\kon E_1\vdash D_1}}{g\circ(G\kon f\kon E_1)\colon G\kon A\kon E_1\kon E_2\vdash D_1\kon D_2}\rzb \fp{\mathcal{D}_3}{h\colon E_2\vdash D_2}}{(g\circ(G\kon f\kon E_1))\kon h\colon G\kon C\kon E_1\kon E_2\vdash D_1\kon D_2,}
\]
and the terms coding these two derivations are equal by \ref{fun2}. Then we proceed as in (2\emph{a}).
\end{proof}

The following lemma serves for the basis of the inductive proof of Proposition~\ref{translation}.

\begin{lem}\label{dodatak1}
If $\Gamma^\ast=B^\ast$, then $\Gamma\vdash B$ has a cut-free derivation in $\mathcal{S}$.
\end{lem}

\begin{proof}
Let $\Gamma$ be $A_1,\ldots A_n$. By Remark~\ref{atomize}, we may assume that no $A_j$ has $\kon$ as the main connective. We proceed by induction on the complexity of $B$ plus the complexity of $\Gamma$ (the number of symbols, including commas, in $B$ and $\Gamma$).

\vspace{1ex}
\noindent(1) [$B$ is $I$] Start with the axiom $\vdash I$ and apply weakening since each $A_j$ must be $I$.

\vspace{1ex}
\noindent(2) [$B$ is $p$] There is $A_j$, which is $p$, and the other members of $\Gamma$ are $I$. Start with the axiom $p\vdash p$ and apply weakening and interchange if necessary.

\vspace{1ex}
\noindent(3) [$B$ is $B_1\kon B_2$] If $B_2^\ast$ is $I$, then apply the induction hypothesis to $\Gamma$ and $B_1$ in order to obtain a cut-free derivation of $\Gamma\vdash B_1$. Then, from the axiom $\vdash I$, using $\vdash\kon$, make a cut-free derivation of $\vdash B_2$. Eventually, apply $\vdash\kon$ to $\Gamma\vdash B_1$ and $\vdash B_2$. The case when $B_1^\ast$ is $I$ is analogous. If neither $B_1^\ast$ nor $B_2^\ast$ is $I$, then by our assumption of the form of $\Gamma$, there must be $1\leq k< n$ such that $(A_1,\ldots,A_k)^\ast=B_1^\ast$ and $(A_{k+1},\ldots,A_n)^\ast=B_2^\ast$. By the induction hypothesis, there are cut-free derivations of $A_1,\ldots,A_k\vdash B_1$ and $A_{k+1},\ldots,A_n\vdash B_2$ and it remains to apply $\vdash\kon$.

\vspace{1ex}
\noindent(4) [$B$ is $B_1\str B_2$] There is $A_j$, which is of the form $C_1\str C_2$ and $C_i^\ast=B_i^\ast$, while the other members of $\Gamma$ are $I$. Apply the induction hypothesis to the singleton sequence $B_1$ and formula $C_1$ in order to obtain a cut-free derivation of $B_1\vdash C_1$ (this case is the reason why the complexity of $\Gamma$ is included). Then apply the induction hypothesis to the singleton sequence $C_2$ and formula $B_2$ in order to obtain a cut-free derivation of $C_2\vdash B_2$. It remains to apply first $\str\vdash$, then $\vdash\str$ and weakening and interchange if necessary.
\end{proof}

The following lemma, which can be taken as a case in the cut-elimination procedure for $\mathcal{S}$ (the one not covered by Theorem~\ref{cutelim1}), is also used in the proof of Proposition~\ref{translation}.

\begin{lem}\label{dodatak2}
If $\Gamma_1\vdash I$ and $\Gamma_2\vdash A$ have cut-free derivations in $\mathcal{S}$, then $\Gamma_1,\Gamma_2\vdash A$ has such a derivation.
\end{lem}

\begin{proof}
Let $\mathcal{D}$ be a cut-free derivation of $\Gamma_1\vdash I$. We proceed by induction on the complexity of $\mathcal{D}$. If $\Gamma_1\vdash I$ is an axiomatic sequent $I\vdash I$ or $\vdash I$, then we either apply weakening, or do nothing to the cut-free derivation of $\Gamma_2\vdash A$. If the last rule in $\mathcal{D}$ is interchange, weakening, $\kon\vdash$ or $\str\vdash$, then we apply the induction hypothesis to the derivation of the upper sequent in the first three cases, or to the derivation of the right upper sequent in the last case, and then apply interchange, weakening, $\kon\vdash$ or $\str\vdash$.
\end{proof}

\begin{proof}[Proof of Proposition~\ref{translation}]
In both directions we proceed by induction on complexity of derivations. For the ``only if'' direction we use the fact that $\ast$ directly translates an inference figure in $\mathcal{S}$ to the corresponding inference figure in $\mathcal{IL}$, save that weakening and $\otimes\vdash$ just vanish in $\mathcal{IL}$.

By Theorem~\ref{cutelim1}, for every $\mathcal{IL}$-derivation there is a cut-free $\mathcal{IL}$-derivation of the same endsequent, thus it is not necessary to treat the cut rule in the ``if'' direction of this proposition. We start with a cut-free derivation $\mathcal{D}$ of $\Gamma^\ast\vdash B^\ast$ in $\mathcal{IL}$, and our goal is to produce a cut-free derivation of $\Gamma\vdash B$ in $\mathcal{S}$.
In the base of the induction $\Gamma^\ast\vdash B^\ast$ is an axiomatic sequent. Hence, $\Gamma^\ast=B^\ast$ and we may apply Lemma~\ref{dodatak1}. In the sequel we use the same assumption on $\Gamma$ as at the beginning of the proof of Lemma~\ref{dodatak1}.

If the last rule in $\mathcal{D}$ is interchange
\[
\f{D\kon E\kon F\kon G\vdash B^\ast}{D\kon F\kon E\kon G\vdash
B^\ast,}
\]
then if either $E$ or $F$ is $I$, this rule has the same upper and lower sequent and one can apply the induction hypothesis. If this is not the case, then by our assumption on $\Gamma$, there are $1\leq i<k\leq l\leq n$ such that $(A_i,\ldots,A_{k-1})^\ast=F$ and $(A_k,\ldots,A_l)^\ast=E$. One can apply the induction hypothesis to the cut-free derivation of $D\kon E\kon F\kon G\vdash B^\ast$, the sequence $A_1,\ldots,A_{i-1},A_k,\ldots,A_l,A_i,\ldots,A_{k-1},A_{l+1},\ldots A_n$, and the formula $B$. It remains to apply (possibly several) interchanges in order to derive $\Gamma\vdash B$.

If the last rule in $\mathcal{D}$ is $\str\vdash$
\[
\f{C\vdash D\quad E\kon F\vdash B^\ast}{C\kon (D\str E)\kon F
\vdash B^\ast},
\]
then by our assumption on $\Gamma$, there is $A_j$ of the form $D'\str E'$ such that $(D')^\ast=D$ and $(E')^\ast=E$. Moreover, we have that $(A_1,\ldots,A_{j-1})^\ast=C$ and $(A_{j+1},\ldots,A_n)^\ast=F$. By applying the induction hypothesis to the cut-free derivation of $C\vdash D$, the sequence $A_1,\ldots,A_{j-1}$ and the formula $D'$, we obtain a cut-free derivation of $A_1,\ldots,A_{j-1}\vdash D'$ in $\mathcal{S}$. In the same way we obtain a cut-free derivation of $E',A_{j+1},\ldots,A_n\vdash B$ and it remains to apply the rule $\str\vdash$ of $\mathcal{S}$.

If the last rule in $\mathcal{D}$ is $\vdash\str$, then  $B$ is of the form $B_1\str B_2$ possibly ``tensored'' by some $I$'s (e.g.\ $B$ is $((I\kon I)\kon (B_1\str B_2))\kon I$), and our $\vdash\str$ is of the form
\[
\f{B_1^\ast\kon G\vdash B_2^\ast}{G\vdash B^\ast.}
\]
Then we apply the induction hypothesis to the cut-free derivation of $B_1^\ast\kon G\vdash B_2^\ast$, the sequence $B_1,A_1,\ldots,A_n$, and the formula $B_2$, in order to obtain a cut-free derivation of $B_1,A_1,\ldots,A_n\vdash B_2$, to which one applies the rule $\vdash\str$ of $\mathcal{S}$. If necessary, it remains to apply $\vdash\kon$ several times using the axiom $\vdash I$ and the derivation of $A_1,\ldots, A_n\vdash B_1\str B_2$.

If the last rule in $\mathcal{D}$ is $\kon\vdash\kon$
\[
\f{C\vdash B_1^\ast\quad D\vdash B_2^\ast}{C\kon D\vdash B^\ast,}
\]
then for some $0\leq k\leq n$, we have that $(A_1,\ldots,A_k)^\ast=C$, $(A_{k+1},\ldots,A_n)^\ast=D$ (note that when $k=0$ the first sequence is empty, while the second is empty when $k=n$). By the induction hypothesis, there are cut-free derivations of $A_1,\ldots,A_k\vdash B_1$ and of $A_{k+1},\ldots,A_n\vdash B_2$ in $\mathcal{S}$.

If neither $B_1^\ast$ nor $B_2^\ast$ is $I$, then $B_1$ and $B_2$ may be chosen so that $B$ is $B_1\kon B_2$, and it remains to apply $\vdash\kon$ to the obtained derivations. If $B_1^\ast$ is $I$, then $B_1,B_2$ can be chosen to be $I$ and $B$, respectively, and we apply Lemma~\ref{dodatak2} in order to obtain a cut-free derivation of $A_1,\ldots,A_n\vdash B$ in $\mathcal{S}$. The case when $B_2^\ast$ is $I$ is analogous up to some interchanges. (Note that it is necessary to treat the case when either $B_1^\ast$ or $B_2^\ast$ is $I$ separately---e.g., let the last rule of an $\mathcal{IL}$-derivation be $\kon\vdash\kon$ with $p\kon (p\str I)\vdash I$ and $q\vdash q$ as upper sequents, while $\Gamma$ is $p,p\str I,q$, and $B$ is $q$.)
\end{proof}

\begin{center}\textmd{\textbf{Acknowledgements}}
\end{center}
\smallskip
The authors were supported by the Science Fund of the Republic of Serbia, Grant No. 7749891, Graphical Languages - GWORDS. We are sincerely grateful to the reviewers for their insightful comments and essential recommendations that significantly improved the clarity of this work.
\medskip


\begin{thebibliography}{99}

\bibitem{BFSV} {\sc C.\ Balteanu, Z.\ Fiedorowicz, R.\ Schw\" anzl} and {\sc R.\ Vogt}, {\it Iterated monoidal categories}, \textbf{\textit{Advances in Mathematics}}, vol.\ 176 (2003), pp.\ 277-349

\bibitem{B11} {\sc W. Burnside}, \textbf{\textit{Theory of Groups of Finite Order}}, second edition, Cambridge University Press, Cambridge, 1911 (reprint, Dover, New York, 1955)

\bibitem{D03} {\sc K.\ Do\v sen}, {\it Identity of proofs based on normalization and generality}, \textbf{\textit{The Bulletin of Symbolic Logic}}, vol.\ 9 (2003), pp.\ 477-503

\bibitem{DP04} {\sc K.\ Do\v sen} and {\sc Z.\ Petri\' c}, \textbf{\textit{Proof-Theoretical Coherence}}, KCL Publications, London, 2004

\bibitem{DP07} --------, \textbf{\textit{Proof-Net Categories}}, Polimetrica, Monza, 2007

\bibitem{K93} {\sc M.\ Kapranov}, {\it The permutoassociahedron, Mac Lane's coherence theorem and asymptotic zones for the KZ equation}, \textbf{\textit{Journal of Pure and Applied Algebra}}, vol.\ 85 (1993) pp.\ 119-142

\bibitem{KML71} {\sc G.M.\ Kelly} and {\sc S.\ Mac Lane}, {\it Coherence in closed categories}, \textbf{\textit{Journal of Pure and Applied Algebra}}, vol.\ 1 (1971), pp.\ 97-140

\bibitem{K03} {\sc J.\ Kock}, \textbf{\textit{Frobenius Algebras and 2D Topological Quantum Field Theories}}, Cambridge University Press, Cambridge, 2003

\bibitem{L68} {\sc J.\ Lambek}, {\it Deductive systems and categories I: Syntactic calculus and residuated categories}, \textbf{\textit{Mathematical Systems Theory}}, vol.\ 2 (1968), pp.\ 287-318

\bibitem{L69} --------, {\it Deductive systems and categories II: Standard constructions and closed categories}, \textbf{\textit{Category Theory, Homology Theory and their Applications I}}, Lecture Notes in Mathematics, vol.\ 86, Springer, Berlin, 1969, pp.\ 76-122

\bibitem{LS86} {\sc J. Lambek} and {\sc P.J. Scott}, \textbf{\textit{Introduction to Higher Order Categorical Logic}}, Cambridge University Press, Cambridge, 1986

\bibitem{ML63} {\sc S.\ Mac Lane}, {\it Natural associativity and commutativity}, \textbf{\textit{Rice University Studies, Papers in Mathematics}}, vol.\ 49 (1963), pp.\ 28-46

\bibitem{ML98} --------, \textbf{\textit{Categories for the Working Mathematician}}, second edition, Springer, Berlin, 1998

\bibitem{MS07} {\sc L. M\' ehats} and {\sc S.V. Soloviev}, {\it Coherence in SMCCs and equivalences on derivations in IMLL with unit}, \textbf{\textit{Annals of Pure and Applied Logic}}, vol. 147 (2007), pp. 127-179

\bibitem{M896} {\sc E.H.\ Moore}, {\it Concerning the abstract groups of order $k!$ and $\frac{1}{2} k!$ holohedrically isomorphic with the symmetric and the alternating substitution-groups on $k$ letters}, \textbf{\textit{Proceedings of the London Mathematical Society}}, vol.\ s1-28 (1896), pp.\ 357-366

\bibitem{S09} {\sc P.\ Selinger}, {\it A survey of graphical languages for monoidal categories}, (B.\ Coecke editor) \textbf{\textit{New Structures for Physics}}, Lecture Notes in Physics 813, Springer, 2009, pp.\ 289-355

\bibitem{S90} {\sc S.V. Soloviev}, {\it On the conditions of full coherence in closed categories}, \textbf{\textit{Journal of Pure and Applied Algebra}}, vol. 69 (1990), pp. 301-329 (Russian version in  \textbf{\textit{Matematicheskie metody postroeniya i analiza algoritmov}}, A.O. Slisenko and S.V. Soloviev, editors, Nauka, Leningrad, 1990, pp. 163-189)

\bibitem{S97} --------, {\it Proof of a conjecture of S. Mac Lane}, \textbf{\textit{Annals of Pure and Applied Logic}}, vol. 90 (1997), pp. 101-162

\bibitem{S63} {\sc J.D.\ Stasheff}, {\it Homotopy associativity of H-spaces, I, II}, \textbf{\textit{Transactions of the American Mathematical Society}}, vol.\ 108 (1963), pp.\ 275-292, 293-312

\bibitem{T10} {\sc V.G.\ Turaev}, \textbf{\textit{Quantum Invariants of Knots and 3-Manifolds}}, de Gruyter Studies in Mathematics 18, De Gruyter, 2010

\bibitem{V77} {\sc R.\ Voreadou}, \textbf{\textit{Coherence and Non-Commutative Diagrams in Closed Categories}}, Memoirs of the American Mathematical Society, no. 182, American Mathematical Society, 1977


\end{thebibliography}
\end{document}